\newtheorem{theorem}{Theorem}
\newtheorem{remark}{Remark}
\newtheorem{definition}{Definition}
\newtheorem{proposition}{Proposition}
\newtheorem{lemma}{Lemma} 
\newtheorem{claim}{Claim}
\newcommand{\RR}{\mathbb{R}}
\newcommand{\NN}{\mathbb{N}}
\newcommand{\DD}{|D|}
\newcommand{\SC}{\mathcal{S}}
\newcommand{\px}{\partial_x}
\newcommand{\pxi}{\partial_\xi}
\def\im{\mathop{\rm Im}\nolimits}
\def\re{\mathop{\rm Re}\nolimits}
\begin{document}

\title{Local well-posedness and blow up  in the energy space for a class of $L^2$ critical  dispersion generalized Benjamin-Ono equations}
\author{C.E. Kenig$^{(1)}$, Y. Martel$^{(2), (3)}$ and L. Robbiano$^{(2)}$}
\date{\small 
$^{(1)}$ Department of Mathematics, University of Chicago,\\
5734 University avenue, Chicago, Illinois 60637-1514,\\
cek@math.uchicago.edu\\ \quad \\ 
$^{(2)}$Laboratoire de math\'ematiques de Versailles, UMR CNRS 8100,\\
 Universit\'e de Versailles Saint-Quentin-en-Yvelines,
\\
 45, av. des Etats-Unis,
 78035 Versailles cedex, France\\   
 yvan.martel@math.uvsq.fr, luc.robbiano@math.uvsq.fr
  \\ \quad \\ 
$^{(3)}$ Institut Universitaire de France}
\maketitle
\begin{abstract}
We consider a family of dispersion generalized Benjamin-Ono equations (dgBO)
$$
    u_t - \partial_x \DD^{\alpha} u + |u|^{2\alpha}  \px u = 0, \quad
    (t,x)\in \mathbb{R}\times \mathbb{R},
$$
where $\widehat {\DD^\alpha u} = |\xi|^\alpha \widehat u$ and  $1\leq \alpha\leq 2$.
These equations are critical with respect to the $L^2$ norm and global existence and interpolate between the modified BO equation ($\alpha=1$) and the critical gKdV equation ($\alpha=2$).

First, we prove local well-posedness in the energy space for $1<\alpha<2$, extending results in \cite{KPV91JAMS}--\cite{KPV} for the generalized KdV equations.

Second, we address the blow up problem in the spirit of \cite{MMjmpa,Me} concerning the critical gKdV equation, by studying rigidity properties of the (dgBO) flow in a neighborhood of the solitons. We prove that for $\alpha$ close to $2$,  solutions of negative energy close to solitons blow up in finite or infinite time in the energy space $H^{\frac \alpha 2}$.

The blow up proof requires both extensions to (dgBO) of  monotonicity results for local $L^2$ norms by pseudo-differential operator tools and perturbative arguments close to the (gKdV) case to obtain structural properties of the linearized flow around solitons.

\bigskip

\centerline{\bf R\'esum\'e}

\medskip

Nous consid\'erons une famille d'\'equations de Benjamin-Ono \`a dispersion g\'en\'eralis\'ee (dgBO)
$$
    u_t - \partial_x \DD^{\alpha} u + |u|^{2\alpha}  \px u = 0, \quad
    (t,x)\in \mathbb{R}\times \mathbb{R},
$$
o\`u $\widehat {\DD^\alpha u} = |\xi|^\alpha \widehat u$ et  $1\leq \alpha\leq 2$.
Ces \'equations sont critiques par rapport \`a la norme  $L^2$  et \`a l'existence globale et sont des interpolations entre  l'\'equation de Benjamin-Ono g\'en\'eralis\'ee critique ($\alpha=1$) et l'\'equation de Korteweg-de Vries g\'en\'eralis\'ee critique  ($\alpha=2$).

D'abord, nous montrons le caract\`ere bien pos\'e de ces \'equations dans l'espace d'\'energie
pour $1<\alpha<2$, \'etendant les r\'esultats de \cite{KPV91JAMS}--\cite{KPV} pour les \'equations de Korteweg-de Vries g\'en\'eralis\'ees.

Ensuite, nous \'etudions le ph\'enom\`ene d'explosion dans l'esprit de
 \cite{MMjmpa,Me} concernant l'\'equation de  gKdV critique, en \'etudiant les propri\'et\'es de rigidit\'e du flot de (dgBO) dans un voisinage des solitons. Nous montrons que pour  $\alpha$ proche de $2$,  
les solutions d'\'energie n\'egative proches des solitons explosent en temps fini ou infini dans l'espace d'\'energie
$H^{\frac \alpha 2}$.

La preuve de ce r\'esultat d'explosion n\'ecessite d'une part   l'adap\-tation \`a
 (dgBO) de r\'esultats de monotonie de normes $L^2$ locales par des m\'ethodes d'op\'e\-rateurs pseudo-differentiels et d'autre part des arguments de perturbation proche du cas (gKdV) pour obtenir des propri\'et\'es structurelles du flot lin\'earis\'e autour des solitons.
\end{abstract}
 
\section{Introduction}
We consider the following dispersion generalized Benjamin-Ono equations (dgBO)
\begin{equation}\label{dgBO}
    u_t - \partial_x \DD^{\alpha} u + |u|^{2\alpha}  \px u = 0, \quad
    (t,x)\in \mathbb{R}\times \mathbb{R},
\end{equation}
where  $\DD^\alpha$ is such that $\widehat {\DD^\alpha u} = |\xi|^\alpha \widehat u$ and  $1\leq \alpha\leq 2$.
Formally, the following three quantities are conserved for solutions
\begin{align}
& \int u(t,x)dx=\int u(0,x)dx,\\
\label{inv1}
&  M(t)  =\int u^2(t,x)dx =M(0), \\ 
& E(t) =\int \left( |\DD^{\frac \alpha 2} u|^2 - \frac {|u|^{2\alpha+2}}{(\alpha+1)(2 \alpha +1)}  \right) (t,x) dx=
E(0).\label{inv2}
\end{align}

Recall the scaling and translation invariances of equation \eqref{dgBO}:  if $u(t,x)$ is solution of \eqref{dgBO} then, for all  $\lambda_0>0$, $x_0\in \mathbb{R}$,
\begin{equation*}
  u_{\lambda_0,x_0}(t,x)=\lambda_0^{ - \frac 1{\alpha}} u(\lambda_0^{-(2+\frac 2{\alpha})} t,\lambda_0^{-\frac 2 \alpha} (x-x_0)) \text{ is also solution of \eqref{dgBO}.}
\end{equation*}
In particular, note that for any $\lambda_0>0$, $x_0\in \mathbb{R}$,
$
\|u_{\lambda_0,x_0}\|_{L^2} = \|u\|_{L^2},
$
which means that \eqref{dgBO} is a family of $L^2$ critical equations
interpolating between the critical Benjamin-Ono equation (also called   modified Benjamin-Ono equation)\begin{equation}\label{cgBO}
    u_t - \partial_x \DD  u +  u^2 \px u = 0, \quad
    (t,x)\in \mathbb{R}\times \mathbb{R},
\end{equation}
and the critical generalized Korteweg--de Vries equation
\begin{equation}\label{cgKdV}
    u_t + \partial_x^3 u +  u^4  \px u = 0, \quad
    (t,x)\in \mathbb{R}\times \mathbb{R}.
\end{equation}

\subsection{Local well-posedness in the energy space}

Recall that the local Cauchy problem is known to be well-posed in the energy space $H^{\frac \alpha 2}$  both for the critical (gKdV) equation -- see  Kenig, Ponce and Vega \cite{KPV} -- 
and for the critical (BO) equation -- see Kenig and Takaoka \cite{KT}.
The first objective of this paper is to present a local Cauchy theory for \eqref{dgBO} in the energy space $H^{\frac \alpha 2}$  for $1<\alpha<2$. 

\begin{theorem}[Local well-posedness in the energy space]\label{TH1}
Let $1< \alpha < 2$ and $A>0$.
Let $u_0\in H^{\frac \alpha 2}$ be such that $\|u_0\|_{H^{\frac \alpha 2}}\leq A$.
Then there exists a unique solution   $u\in C([0,T], H^{\frac \alpha 2})\cap Z_T$ of 
\begin{equation}\label{eq:dgBOpm}
\left\{  \begin{split} & u_t - \partial_x \DD^{\alpha} u \pm |u|^{2\alpha}  \px u = 0, \quad
    (t,x)\in \mathbb{R}\times \mathbb{R},\\
    & u(t=0)=u_0,\quad x\in \mathbb{R},
\end{split} \right.
\end{equation}
where $T=T(A)>0$.
Moreover, the map $u_0\mapsto u \in C([0,T], H^{\frac \alpha 2})\cap Z_T$ is continuous.
\end{theorem}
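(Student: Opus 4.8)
The plan is to run a contraction-mapping argument on the Duhamel formulation
\[
  u(t)=S(t)u_0\mp\int_0^t S(t-s)\bigl(|u|^{2\alpha}\px u\bigr)(s)\,ds,\qquad
  S(t)=e^{t\px\DD^\alpha},
\]
in a resolution space $C([0,T],H^{\frac\alpha2})\cap Z_T$, following the Kenig--Ponce--Vega scheme \cite{KPV} for the critical (gKdV) equation, with the Airy group replaced by $S(t)=e^{t\px\DD^\alpha}$, whose phase is $\xi|\xi|^\alpha$. The first step, which is really the heart of the matter, is to establish for $1<\alpha<2$ the dispersive estimates that define $Z_T$: a Kato-type local smoothing estimate $\|\DD^{\frac\alpha2}S(t)u_0\|_{L^\infty_x L^2_T}\lesssim\|u_0\|_{L^2}$, its dual, and the retarded version in which the Duhamel term gains $\alpha$ derivatives, i.e. $\|\DD^{\alpha}\int_0^t S(t-s)f(s)\,ds\|_{L^\infty_x L^2_T}\lesssim\|f\|_{L^1_x L^2_T}$; maximal-function estimates $\|S(t)u_0\|_{L^p_x L^\infty_T}\lesssim\|u_0\|_{H^{s_p}}$ with $s_p<\frac\alpha2$; and, if convenient, Strichartz estimates. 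These interpolate between the Benjamin--Ono ($\alpha=1$) and Airy ($\alpha=2$) bounds and follow from stationary-phase analysis of the oscillatory kernel, the low frequencies --- where $|\xi|^\alpha$ is not smooth --- being treated by hand.

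\emph{Nonlinear estimate.} The single derivative lost in $|u|^{2\alpha}\px u$ is exactly what the smoothing gain recovers, with strict room to spare because $\alpha>1$. Working at regularity $\frac\alpha2$ and using the retarded smoothing estimates above, one reduces the nonlinear term to controlling $\|\,|u|^{2\alpha}\px u\,\|_{L^1_x L^2_T}$ (the divergence form $|u|^{2\alpha}\px u=\tfrac1{2\alpha+1}\px\bigl(|u|^{2\alpha}u\bigr)$, valid for real-valued $u$, is convenient for transferring the derivative onto $S(t)$), which is then split by H\"older,
\[
  \|\,|u|^{2\alpha}\px u\,\|_{L^1_x L^2_T}\le\|u\|_{L^{2\alpha}_x L^\infty_T}^{2\alpha}\,\|\px u\|_{L^\infty_x L^2_T},
\]
the first factor being controlled by the maximal-function norm of $Z_T$ (with a gain of a positive power of $T$) and $\|\px u\|_{L^\infty_x L^2_T}$ by the smoothing norm, using that $\frac\alpha2>\frac12$ so that $H^{\frac\alpha2}(\RR)\hookrightarrow L^\infty(\RR)$ --- this embedding, available precisely because $\alpha>1$, is what makes the $2\alpha$ low-order factors harmless. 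Since the scaling-critical index of \eqref{dgBO} is $0<\frac\alpha2$, every estimate carries a positive power of $T$, so the Duhamel map is a contraction on a ball of $C([0,T],H^{\frac\alpha2})\cap Z_T$ for $T=T(A)>0$ small, which produces the solution on $[0,T]$.

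\emph{Uniqueness and continuous dependence.} Uniqueness in $C([0,T],H^{\frac\alpha2})\cap Z_T$ follows by running the same multilinear estimates on the difference of two solutions, using that $s\mapsto|s|^{2\alpha}$ is $C^1$ with H\"older-continuous derivative (since $2\alpha>2$) to estimate $|u|^{2\alpha}\px u-|v|^{2\alpha}\px v$. Continuous dependence of $u_0\mapsto u$ in $C([0,T],H^{\frac\alpha2})\cap Z_T$ follows from the uniformity of the contraction over $\|u_0\|_{H^{\frac\alpha2}}\le A$; should the resulting convergence be only weak, strong continuity is recovered by a Bona--Smith-type regularization argument.

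\emph{Main difficulty.} The principal obstacle is establishing the full package of dispersive estimates for the non-homogeneous group $S(t)=e^{t\px\DD^\alpha}$ over the whole range $1<\alpha<2$ --- in particular the sharp local smoothing and the maximal-function bounds --- while handling correctly the singularity of the symbol $|\xi|^\alpha$ at $\xi=0$, and then checking that the smoothing gain $\alpha$ strictly exceeds the one derivative lost in the nonlinearity. This holds for every $\alpha>1$ but degenerates as $\alpha\to1^+$, where the Benjamin--Ono scaling is known to rule out a pure Picard-iteration argument (so that the endpoint genuinely requires the different techniques of Kenig--Takaoka \cite{KT}); correspondingly one expects $T(A)\to0$ as $\alpha\to1$.
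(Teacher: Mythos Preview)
Your proposal is correct and follows essentially the same route as the paper: a contraction in a space $Z_T$ built from the Kato smoothing norm $\|\DD^\alpha u\|_{L^\infty_x L^2_T}$, the maximal-function norm $\|u\|_{L^{2\alpha}_x L^\infty_T}$, and a $T^{-\gamma}$-weighted norm on $\|\px u\|_{L^\infty_x L^2_T}$, with the same H\"older split $\|\,|u|^{2\alpha}\px u\,\|_{L^1_x L^2_T}\le\|u\|_{L^{2\alpha}_x L^\infty_T}^{2\alpha}\|\px u\|_{L^\infty_x L^2_T}$ and the same Lipschitz estimate on $|v|^{2\alpha}-|w|^{2\alpha}$ for the difference. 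Two small corrections: the power of $T$ comes from the weighted smoothing norm on $\px u$ (the paper's estimate~(v)), not from the maximal-function factor, and the Sobolev embedding $H^{\alpha/2}\hookrightarrow L^\infty$ is not actually used --- the restriction $\alpha>1$ enters instead as $1-\tfrac\alpha2<\tfrac\alpha2$, which is precisely what lets one write $\px=\DD^{1-\alpha/2}\DD^{\alpha/2}$ and absorb the full derivative into the smoothing gain applied to $H^{\alpha/2}$ data; the Bona--Smith step is likewise unnecessary since the contraction already gives Lipschitz dependence.
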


Theorem \ref{TH1} is proved by a contraction argument in $Z_T$, see
  the proof of Theorem \ref{TH1} for the definition of this functional space.
The linear estimates, mainly taken from \cite{KPVIUJ} and \cite{KPV},  are gathered in Lemma \ref{le:linear}.

\begin{remark}
Together with Theorem \ref{TH1}, we obtain in this paper a property of weak continuity of the flow of equation \eqref{dgBO} in the energy space, see Theorem \ref{TH3}. 
See also \cite{MMjmpa} and \cite{CK} for the cases $\alpha=1,$ $2$.
\end{remark}

In this paper, by solutions of \eqref{dgBO}, we mean $H^{\frac \alpha 2}$ solutions in the sense of Theorem \ref{TH1}. For such solutions, it follows from standard arguments that the two quantities $M(u(t))$ and $E(u(t))$ defined in \eqref{inv1}, \eqref{inv2} are conserved as long as the solution exists
(see also Remark \ref{RE:bip}).

\subsection{Blow up in finite or infinite time}

The second  objective of this paper is to study  global well-posedness versus blow up for equations \eqref{dgBO}, i.e. in the focusing case. 
Recall that   equations \eqref{dgBO} are critical with respect to global well-posedness in the following sense. For fixed $1\leq \alpha\leq 2$, the power $2\alpha+1$ of the nonlinearity in \eqref{dgBO} is the smallest power for which blow up is possible in the energy space, whereas from the critical Gagliardo-Nirenberg inequality 
\begin{equation}\label{gn}
\int |u|^{2\alpha+2}\leq C_\alpha \left(\int |D^{\frac \alpha 2} u|^2 \right)
\left(\int u^2\right)^{\alpha},
\end{equation}
 it is a standard observation that small (in $L^2$) solutions of \eqref{dgBO} are global and bounded from Theorem \ref{TH1}.
Note that inequality \eqref{gn} is easily proved using Fourier analysis and scaling arguments. See Proposition \ref{le:Q} for the value of the best  constant in \eqref{gn}, related to soliton solutions of \eqref{dgBO}.
 
Following Martel and Merle \cite{MMjmpa} and Merle \cite{Me} concerning the critical  (gKdV) equation, we look for blow up solutions close to the soliton family, which we introduce now.
We call soliton any traveling wave solution $u(t,x)=Q_{\lambda_0}(x-x_0-\lambda_0^{-2} t)$ of the equation,
with $\lambda_0>0$, $x_0\in \mathbb{R}$, $Q_{\lambda_0}(x)=\lambda_0^{- \frac 1{\alpha}} Q(\lambda_0^{- \frac 2 \alpha} x)$ and where $Q$  solves:
\begin{equation}\label{eq:Q}
\DD^\alpha Q + Q - \tfrac 1{2\alpha+1} {Q^{2\alpha+1}} =0, \quad
Q\in H^{\frac \alpha 2},\quad Q>0.
\end{equation}
For the critical (gKdV) case ($\alpha=2$), it follows from standard ODE arguments  that there exists a unique (up to translations)   solution of \eqref{eq:Q}, which is
\begin{equation}\label{explicitQ}
Q(x)=\frac {15^{1/4}} {{\rm cosh}^{1/2} (2x)}.
\end{equation}
Moreover, Weinstein \cite{We83} proved that the function $Q$ provides the best constant in estimate \eqref{gn} for $\alpha=2$.
For general values of $1\leq \alpha<2$,  existence of a positive even solution of \eqref{eq:Q} is known by variational arguments, see Weinstein \cite{We85,We87} and Proposition \ref{le:Q} of the present paper. 
Such a solution is called a ground state of \eqref{eq:Q}. However,  no explicit formula is known for $Q$ and uniqueness is an open problem. Note in particular that the striking uniqueness proof of Amick and Toland \cite{AT} for the Benjamin-Ono equation (BO)
\begin{equation}\label{BO}
    u_t - \partial_x \DD  u + u  \px u = 0, \quad
    (t,x)\in \mathbb{R}\times \mathbb{R},
\end{equation}
does not seem to apply to other than   quadratic nonlinearity.

For $\alpha<2$ close enough to $2$ (i.e. when  the model is close in some sense to the critical generalized KdV equation),  by perturbative arguments, we are able to extend some properties of the (gKdV) case. In particular, we prove uniqueness in some sense of the ground state of \eqref{eq:Q}.
We also prove in this framework  a crucial rigidity property of the linearized flow around soliton (hereafter called linear Liouville property), see Proposition \ref{pr:close2}.

From this linear Liouville property and monotonicity properties of local $L^2$ quantities (proved in Section 4), we can extend the main results in \cite{MMjmpa} and  \cite{Me}  to the dispersion generalized BO equation \eqref{dgBO} for $\alpha$ close to $2$. In particular, we claim the following result of finite or infinite time blow up.

\begin{theorem}[Blow up in finite or infinite time]\label{TH2}
There exists $\alpha_{0} \in [1,2)$ such that for all $\alpha \in (\alpha_0,2)$, the following holds.
\begin{itemize}
\item[\rm (i)]  There exists a unique even positive solution $Q$ of \eqref{eq:Q} which minimizes the constant $C_\alpha$ in \eqref{gn}.
\item[\rm (ii)] There exists $\beta_{0}>0$ such that if $u(t)$ is an $H^{\frac \alpha 2}$ solution of \eqref{dgBO}  such that
$$
E(u(0))<0 \quad \text{and} \quad \int u^2(0) \leq \int Q^2 + \beta_{0},
$$
then $u(t)$ blows up in finite or infinite time in $H^{\frac \alpha 2}$.
\end{itemize}
\end{theorem}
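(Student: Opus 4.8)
The plan is to follow the strategy of Martel--Merle \cite{MMjmpa} and Merle \cite{Me} for the critical (gKdV) equation, replacing the concrete spectral/algebraic inputs there by the perturbative substitutes valid for $\alpha$ close to $2$ and by the pseudo-differential monotonicity tools for local $L^2$ masses. For part (i), I would start from Proposition \ref{le:Q}: variational arguments (concentration-compactness for the Weinstein functional associated to \eqref{gn}) give existence of an even positive ground state $Q_\alpha$ attaining $C_\alpha$, and the Euler--Lagrange equation is \eqref{eq:Q} up to scaling. Uniqueness for $\alpha$ near $2$ follows by a perturbation argument around the explicit (gKdV) profile \eqref{explicitQ}: the linearized operator $L = \DD^2 + 1 - Q^{2\alpha}$ at $\alpha = 2$ has kernel exactly $\{Q'\}$ on the relevant class (a classical fact from the gKdV analysis), so the implicit function theorem in a suitable weighted space applied to the map $(\alpha, \text{profile}) \mapsto \DD^\alpha Q + Q - \frac{1}{2\alpha+1}Q^{2\alpha+1}$ produces a unique even branch $Q_\alpha$, and any other ground state would have to coincide with it by a continuity/degree argument once $\alpha_0$ is taken close enough to $2$. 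This gives simultaneously the ``uniqueness in some sense'' needed for the linear Liouville property of Proposition \ref{pr:close2}.

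For part (ii), I would argue by contradiction: suppose $u(t)$ is a global-in-time (forward) $H^{\frac\alpha2}$ solution with $E(u(0)) < 0$ and $\|u(0)\|_{L^2}^2 \le \|Q\|_{L^2}^2 + \beta_0$ that does not blow up, i.e. $\sup_{t\ge 0}\|\DD^{\alpha/2} u(t)\|_{L^2} < \infty$. The first step is a variational/modulation lemma: using the sharp Gagliardo--Nirenberg inequality \eqref{gn} together with $M$ and $E$ conservation and the negative-energy, near-minimal-mass hypothesis, one shows $u(t)$ stays, for all $t\ge 0$, in a small tube around the soliton manifold $\{\lambda^{-1/\alpha} Q((x-y)/\lambda^{2/\alpha})\}$, and one introduces $C^1$ modulation parameters $\lambda(t) > 0$, $y(t)$ with an orthogonality decomposition $u(t,x) = \lambda(t)^{-1/\alpha}\big(Q + \varepsilon(t)\big)\big((x-y(t))/\lambda(t)^{2/\alpha}\big)$ where $\varepsilon(t)$ is small in $H^{\frac\alpha2}$ and satisfies suitable orthogonality conditions (e.g. to $Q$ and to $\Lambda Q = \frac{d}{ds}\big|_{s=1}(s^{-1/\alpha}Q(s^{-2/\alpha}\cdot))$). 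Standard modulation computations then give the equations for $\lambda_t/\lambda$ and $y_t$ in terms of $\varepsilon$, and the key monotonicity output (from the almost self-adjointness of the linearized flow and the negative energy) is that $\lambda(t)$ is essentially nonincreasing and, because blow up is excluded, is bounded below: $0 < \lambda_- \le \lambda(t) \le \lambda_+ < \infty$.

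The heart of the argument is then the rigidity (Liouville) step. From the boundedness of $\lambda(t)$ and the monotonicity of \emph{local} $L^2$ norms $\int u^2(t,x)\varphi((x-y(t))/R)\,dx$ on the right — which is exactly what the pseudo-differential commutator estimates of Section 4 are designed to provide, since for $\alpha < 2$ the operator $\partial_x\DD^\alpha$ is nonlocal and the Kato-type identity of the gKdV case must be replaced by a positivity estimate for a suitable symbol — one upgrades weak $H^{\frac\alpha2}$ convergence of time-translates $u(t_n + \cdot)$ to get an $L^2$-compact limiting object, a solution of \eqref{dgBO} that remains in the soliton tube for all $t \in \RR$ with uniformly $L^2$-small, non-dispersing $\varepsilon$. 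The linear Liouville property Proposition \ref{pr:close2} (the linearized dgBO flow around $Q$ has no nonzero bounded non-radiating solutions orthogonal to the symmetries, proved perturbatively from the gKdV case) then forces this limit to be exactly a soliton, hence $\varepsilon \equiv 0$ along the limit, which contradicts $E(u(0)) < 0$ (a soliton has zero energy, while negativity of energy is preserved and passes to the $L^2$-strong limit via the boundedness of $\DD^{\alpha/2}u$). This contradiction shows the solution must blow up in finite or infinite time in $H^{\frac\alpha2}$.

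The main obstacle, and where essentially all the work of the paper goes, is twofold and intertwined. First, for $1 \le \alpha < 2$ the nonlocality of $\DD^\alpha$ destroys the exact pointwise monotonicity identities available for gKdV, so one must build monotonicity of local masses from pseudo-differential positivity/commutator estimates (the symbol of $[\varphi_R, \partial_x\DD^\alpha]$ must be controlled with the right sign up to acceptable errors) — this is the technical content of Section 4. Second, none of the spectral facts about the linearized operator $L$ are available explicitly since $Q$ itself is not explicit for $\alpha < 2$; coercivity of $L$ under the orthogonality conditions, the kernel being spanned by $Q'$, and the linear Liouville property must all be obtained by perturbation from $\alpha = 2$, which is delicate because $\DD^\alpha \to \DD^2$ only in a weak operator sense and the relevant function spaces ($H^{\alpha/2}$, weighted spaces) themselves move with $\alpha$; this is why the statement only asserts existence of some $\alpha_0 < 2$ rather than an explicit range, and why Proposition \ref{pr:close2} is the crux on which Theorem \ref{TH2}(ii) rests.
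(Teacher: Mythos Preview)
Your overall strategy matches the paper's, but there is a genuine gap in the rigidity step. You claim that ``$\lambda(t)$ is essentially nonincreasing'' and that the limiting object stays in the soliton tube for all $t\in\RR$, and you never mention the invariant $\int u(t)=\int u(0)$. This is the missing idea. In the paper (Section~6, Step~4, Lemma~\ref{cl:9}), once monotonicity of local $L^2$ masses gives enough spatial decay to put the limiting solution $\tilde u(t)$ in $L^1$, the identity $\int Q_\lambda = \lambda^{1/\alpha}\int Q$ combined with $\int \tilde u(t)=\int \tilde u(0)$ yields $|\tilde\lambda(t)^{1/\alpha}-\tilde\lambda(0)^{1/\alpha}|\le C\beta_0^\epsilon$ by a bootstrap. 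Without this third conserved quantity you cannot rule out that the limiting object itself blows up or leaves the tube; no monotonicity of $\lambda$ is ever established. This use of the mean is exactly the key idea of Merle~\cite{Me} and is singled out in the introduction of the paper as essential.

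Two further points. First, the linear Liouville property (Proposition~\ref{pr:close2}(iv)) is not applied directly to the nonlinear limiting solution; it is used in Section~5 to prove a \emph{nonlinear} Liouville property (Proposition~\ref{PR:4}) via a separate renormalization $\eta_n/\sup_s\|\eta_n(s)\|_{L^2}$ and passage to a linearized limit, and it is this nonlinear statement that is invoked in Section~6. Your sketch collapses two distinct compactness arguments into one. Second, the passage of negative energy to the weak limit (your final contradiction) is not automatic from ``$L^2$-strong limit via boundedness of $\DD^{\alpha/2}u$'': this is Lemma~\ref{lemma13} and requires a careful cutoff decomposition together with the sharp Gagliardo--Nirenberg inequality to discard the far-field energy.
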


Note that from the Gagliardo--Nirenberg's inequality with best constant (see Proposition~\ref{le:Q}) $E(u(0))<0$ implies that 
$\int u(0)^2 > \int Q^2$. Therefore, we prove blow up in finite or infinite time for any $u(0)$ such that
$E(u(0))<0$, $\int Q^2 < \int u^2(0) \leq \int Q^2 + \beta_{0}$, which is a large class of initial data close to $Q$ up to the invariances of the equation (see Lemma \ref{claim10}).

\medskip

As mentionned before, Theorem \ref{TH2} (i) is obtained by perturbative arguments. Further natural properties of $Q$ -- such as decay at infinity -- are also presented in Proposition \ref{le:Q}.

Theorem \ref{TH2} (ii) is the extension to \eqref{dgBO} of the main result in
\cite{Me} following the same strategy based on rigidity properties of the nonlinear flow around solitons. First, we prove a nonlinear Liouville property around the soliton as a consequence of the linear Liouville property above discussed. See Section 5 where we extend the main results of \cite{MMjmpa}. 
Then, in Section 6, we prove blow up in the sense of Theorem \ref{TH2} by a contradiction argument, using the nonlinear Liouville property and  the additional  invariant $\int u(t)=\int u(0)$,  as in \cite{Me}.
\medskip

We now discuss how techniques involved in \cite{MMjmpa} and \cite{Me} have been extended to eq. \eqref{dgBO}.
\begin{itemize}
\item  Monotonicity properties of local $L^2$ quantities. These arguments were developed in \cite{MMjmpa} and \cite{Me} in order to study the variation in time of the $L^2$ norm of the solution in various regions of space (on the left or on the right, in some sense, to the soliton). For the critical (gKdV) equation, these monotonicity arguments are  mainly based on the Kato identity  and refined estimates on the nonlinear term in this identity. For Benjamin-Ono type equations, such localization arguments are subtle to adapt due to the nonlocal character of the linear operator. Such $L^2$ monotonicity arguments were developed in \cite{KM} to prove asymptotic stability of the solitons for the (BO) equation, but the arguments in \cite{KM} seem to work only for the operator $\DD$, i.e. for $\alpha=1$ in \eqref{dgBO}. In the present paper, we extend these results to any $\alpha \in (1,2)$ using  tools from pseudo-differential calculus. Section 4 is devoted to these arguments.

\item Weak continuity of the flow. In addition to  the local Cauchy theory, we need the weak continuity of the flow of \eqref{dgBO}   in several key limiting arguments. See Theorem \ref{TH3}.

\item The linear Liouville property.  The linear Liouville property is   obtained by perturbation of the (gKdV) case, originally treated in \cite{MMjmpa}. In this paper, we rely on the approach of \cite{yvanSIAM}.
\end{itemize}
It follows from the arguments of this paper that Theorem \ref{TH2} holds true for any $1<\alpha<2$ provided the linear liouville property is assumed. Indeed, it is the only part in the proof of Theorem \ref{TH2} where we need perturbative arguments close to the (gKdV) case.  In particular, the monotonicity arguments and the overall strategy work for any $1< \alpha \leq 2$. 

Remark finally that in addition to \cite{MMjmpa} and \cite{Me}, two further works  (\cite{MM1} and \cite{MM2}) provide refined information about the blow up phenomenon for the critical (gKdV) equation close to the soliton family. Indeed, in \cite{MM1}, the soliton $Q$ is found to be the universal blow up profile in the context of Theorem \ref{TH2}. The proof is based on an additional rigidity property of the (gKdV) flow around solitons in a blow up  regime. Finally, \cite{MM2} proves blow up in finite time, together with an upper estimate on the blow up rate, provided that the initial data has some space decay. However, note that the blow up problem for the critical (gKdV) equations is   not yet completely understood, in particular the blow up rate. The case of the nonlinear Schr\"odinger equation is by now much better known, see Merle and Rapha\"el \cite{MR3, MR4, MR5} and references therein.
For simplicity and brevity, we do not try here to extend results of \cite{MM1} and \cite{MM2} to (dgBO) equation.

\subsection{Plan of the paper}

The paper is organized as follows. In Section 2, we prove Theorem \ref{TH1}. In Section 3, we study the stationary problem \eqref{eq:Q} in the general case $1\leq \alpha \leq 2$ and obtain further properties in the perturbative case where $\alpha$ is close to $2$. In Section 4, we present  $L^2$ monotonicity properties for the model \eqref{dgBO} for all $\alpha\in (1,2)$. In Section 5, we deal with solutions close to a (bounded) soliton and finally in Section~6, we prove Theorem \ref{TH2}, i.e. for $\alpha$ close to $2$, blow up in finite or infinite time for negative energy solutions close to  solitons.

\medskip

\noindent\textbf{Acknowledgments.} Part of this work was done while the second author was visiting the University of Chicago. He would like to thank the Department of Mathematics for its hospitality.
The second author was partly supported by the French Agence Nationale de la Recherche, ANR ONDENONLIN.

\section{Local well-posedness  in the energy space}
\subsection{Proof of Theorem \ref{TH1}}

We denote the Fourier transform by $\mathcal{F}(f)(\xi)=\hat f(\xi) = \int e^{-i x \xi} f(x) dx$.

We introduce the group $W_\alpha(t)$ defined by
$$
\mathcal{F}(W_{\alpha}(t)f)(\xi)
=e^{it(|\xi|^\alpha \xi)} \widehat f(\xi),\quad 1<\alpha<2.
$$
Then, we claim (or recall) the following linear estimates (we use classical notation from \cite{KPV}).

\begin{lemma}\label{le:linear}
For $0<T<1$, there exist $C>0$ such that, for all $u_0 \in L^2$ , then
\begin{align*}
& (i) \qquad \sup_{t} \|W_{\alpha}(t)u_0\|_{L^2}\leq C \|u_0\|_{L^2};\\
& (ii) \qquad \||D|^{\frac \alpha 2} W_{\alpha}(t)u_0\|_{L^\infty_xL^2_t}
\leq C\|u_0\|_{L^2}; \\ & (iii) \qquad \textrm{ for all } u_0\in H^{\alpha/2},\ \|W_{\alpha}(t)u_0\|_{L^\infty_xL^2_T} \leq CT^{1/2}
\|u_0\|_{H^{\frac \alpha 2}};\\
& \textrm{ for }0\le \beta< \alpha/2, \textrm{ there exists }\gamma>0 \textrm{ such that, }\\
&(iv) \qquad \||D|^{\beta} W_{\alpha}(t)u_0\|_{L^\infty_xL^2_T}\leq CT^{\gamma}\|u_0\|_{L^2}; \\
&(v)\qquad  \textrm{ for all } u_0\in H^{\alpha/2},\ \|\partial_xW_\alpha(t)u_0\|_{L^\infty_xL^2_T}\le CT^\gamma\| u_0\|_{H^{\alpha/2}};\\
&(vi)\qquad  \textrm{ for all } u_0\in H^{\beta^+},\textrm{ where }\beta^+>\frac34-\frac\alpha4, \  \|W_\alpha(t)u_0\|_{L^{2\alpha}_xL^\infty_T}\le C\| u_0\|_{H^{\beta+}};\\
& \textrm{ for all } h\in L^1_xL^2_T,\\
&(vii)\qquad \||D|^\alpha \int_0^tW_\alpha(t-t')h(t')dt'\|_{L^\infty_x L^2_t}\le C\|h\|_{L^1_x L^2_T};\\
&(viii)\qquad \sup_{0<t<T}\||D|^{\alpha/2}\int_0^tW_\alpha(t-t')h(t')dt'\|_{L^2_x}\le C\|h\|_{L^1_xL^2_T};\\
& \textrm{ for }0\le\beta<\alpha, \textrm{ there exists }\gamma>0 \textrm{ such that, }\\
&(ix)\qquad \||D|^\beta\int_0^tW_\alpha(t-t')h(t')dt'\|_{L^\infty_xL^2_T}\le CT^\gamma \|h\|_{L^1_xL^2_T};\\
&\textrm{ there exists }\gamma>0 \textrm{ such that, }\\
&(x)\qquad \sup_{0<t<T}\|\int_0^tW_\alpha(t-t')h(t')dt'\|_{L^2_x}\le CT^\gamma\|h\|_{L^1_xL^2_T};\\
&\textrm{ for all } h\in L^1_xL^2_T \textrm{ such that } \partial_xh\in L^1_xL^2_T,\\
&(xi)\qquad \|\int_0^t W_\alpha(t-t')\partial_xh(t')dt'\|_{L^{2\alpha}_xL^\infty_T}\le C\|\partial_xh\|_{L^1_xL^2_T} 
\end{align*}
\end{lemma}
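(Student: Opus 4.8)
The plan is to follow the Kenig--Ponce--Vega scheme used for $\alpha=2$, using only the following properties of the phase $\phi(\xi)=\xi|D|^\alpha$ applied to $\xi$, i.e.\ $\phi(\xi)=\xi|\xi|^\alpha$: it is odd and $C^1$, and for $1<\alpha<2$ one has $\phi'(\xi)=(\alpha+1)|\xi|^\alpha$ and $\phi''(\xi)=\alpha(\alpha+1)|\xi|^{\alpha-1}\operatorname{sgn}\xi$, so that $\phi''$ is continuous and vanishes only at the origin. I would organize the estimates in four groups: the sharp ones (i)--(iii); the ``$T^\gamma$-gain'' estimates (iv)--(v) and their Duhamel analogues (ix)--(x); the double/dual smoothing estimates (vii)--(viii); and the maximal-function estimates (vi) and (xi).

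First, (i) is immediate from Plancherel, $W_\alpha(t)$ being unitary on $L^2$. For the Kato smoothing estimate (ii), I would fix $x$ and compute $\||D|^{\alpha/2}W_\alpha(t)u_0(x)\|_{L^2_t(\RR)}$ by Plancherel in $t$ after the change of variables $\tau=\phi(\xi)$ performed separately on $\{\xi>0\}$ and $\{\xi<0\}$; since $|\xi|^\alpha/|\phi'(\xi)|=1/(\alpha+1)$ is bounded, one gets $\||D|^{\alpha/2}W_\alpha(t)u_0(x)\|_{L^2_t}^2\le C\|u_0\|_{L^2}^2$ uniformly in $x$. Estimate (iii) is softer: $\|W_\alpha(t)u_0\|_{L^\infty_x}\le C\|W_\alpha(t)u_0\|_{H^{\alpha/2}}=C\|u_0\|_{H^{\alpha/2}}$ by the Sobolev embedding $H^{\alpha/2}(\RR)\hookrightarrow L^\infty$ (valid precisely because $\alpha>1$) and unitarity, and then Hölder in $t$ on $[0,T]$ produces the factor $T^{1/2}$.

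For (iv) (and identically for (v) with $|D|$ in place of $|D|^\beta$) I would decompose $u_0$ dyadically in frequency. On the piece supported in $|\xi|\le1$ one bounds $\||D|^\beta W_\alpha(t)P_{\le1}u_0\|_{L^\infty_x}\le\||\xi|^\beta\mathbf 1_{|\xi|\le1}\widehat u_0\|_{L^1_\xi}\le C\|u_0\|_{L^2}$ and applies Hölder in $t$. On each piece $P_Nu_0$, $N\ge1$, one interpolates between the gain from (ii) restricted to $[0,T]$, namely $\||D|^\beta W_\alpha(t)P_Nu_0\|_{L^\infty_xL^2_T}\le CN^{\beta-\alpha/2}\|P_Nu_0\|_{L^2}$, and the crude bound $\||D|^\beta W_\alpha(t)P_Nu_0\|_{L^\infty_xL^2_T}\le CT^{1/2}N^{\beta+1/2}\|P_Nu_0\|_{L^2}$ (Cauchy--Schwarz in $\xi$ and Hölder in $t$); since $\beta<\alpha/2$ (resp.\ $\alpha>1$ for (v)) one can choose an interpolation parameter $\theta\in(0,1)$ making the power of $N$ strictly negative and the power of $T$ equal to $\gamma=\theta/2>0$, and then sum in $N$ by Cauchy--Schwarz. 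The inhomogeneous estimates (ix)--(x) follow from (iv) and (iii) by Minkowski's inequality in $t'$, the passage from the full integral $\int_\RR$ to the retarded one $\int_0^t$ being handled by the Christ--Kiselev lemma (admissible since the exponents are correctly ordered). For (vii)--(viii) I would write $\int_0^t|D|^\alpha W_\alpha(t-t')h\,dt'=\bigl(|D|^{\alpha/2}W_\alpha(t)\bigr)\circ\int_0^t|D|^{\alpha/2}W_\alpha(-t')h\,dt'$ (and similarly with one factor replaced by $W_\alpha(t)$ for (viii)): each factor is bounded by (the dual of) (ii) and by (i), so the composition maps $L^1_xL^2_t\to L^2_x\to L^\infty_xL^2_t$ (resp.\ $\to L^\infty_tL^2_x$), and Christ--Kiselev again removes the truncation.

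Finally, (vi) and (xi) are the maximal-function estimates in $L^{2\alpha}_xL^\infty_T$. I would prove them by a $TT^*$ argument reducing to a bound on the oscillatory kernel $\int e^{i(x\xi+t\phi(\xi))}|\xi|^{-2\beta}\,d\xi$, treating low frequencies trivially and using van der Corput / stationary phase with $\phi''(\xi)\sim|\xi|^{\alpha-1}$ away from the origin; the resulting kernel decay feeds into a Hardy--Littlewood--Sobolev estimate, and the numerology $\beta>\tfrac34-\tfrac\alpha4$, $q=2\alpha$ is exactly what the homogeneity of $\phi$ forces. I expect this last group to be the main technical obstacle: it is the only point requiring genuine oscillatory-integral analysis of the $\alpha$-dependent phase rather than soft tools (Plancherel, Sobolev embedding, interpolation, duality, Christ--Kiselev). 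The secondary delicate point is bookkeeping the exponents in the interpolation step of (iv), (v) and (ix) so that a strictly positive power of $T$ is retained; this is where the strict inequalities $1<\alpha<2$ and $\beta<\alpha/2$ are used.
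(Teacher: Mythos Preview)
Your overall scheme is the same as the paper's (the Kenig--Ponce--Vega framework), and parts (i)--(v), (vii)--(viii) match essentially line for line. There are, however, a few points where your route diverges or where the wording hides a gap.

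For (vi) and (xi) the paper does \emph{not} carry out any new oscillatory-integral analysis. Instead it cites two known maximal-function bounds---the frequency-localized $L^2_xL^\infty_T$ estimate $\|W_\alpha(t)P_ku_0\|_{L^2_xL^\infty_T}\le C2^{k(\alpha+1)/4}\|u_0\|_{L^2}$ from \cite{KPV91JAMS} and the global $L^4_xL^\infty_T$ estimate $\|W_\alpha(t)u_0\|_{L^4_xL^\infty_T}\le C\||D|^{1/4}u_0\|_{L^2}$ from \cite{KPVIUJ}---and interpolates between them with $\theta=\tfrac2\alpha-1$ to reach $L^{2\alpha}_xL^\infty_T$ with the exponent $(3-\alpha)/4$. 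Estimate (xi) then follows from (vi), (viii) and Christ--Kiselev (in the Molinet--Ribaud form), with no further van der Corput work. Your proposed $TT^*$ plus stationary phase would of course reprove these cited estimates, but it is substantially more effort than the paper expends.

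For (ix)--(x) your phrase ``by Minkowski's inequality in $t'$'' is not the right mechanism: Minkowski would put $L^1_TL^2_x$ on the right-hand side, not $L^1_xL^2_T$. The paper proves (x) by \emph{duality} to the case $\beta=0$ of (iv) (the adjoint of $W_\alpha(t):L^2\to L^\infty_xL^2_T$ maps $L^1_xL^2_T\to L^2$), and proves (ix) by a single frequency cutoff at level $M$: high frequencies are handled via (vii) after transferring $|D|^{\beta-\alpha}$ onto $h$, low frequencies via a crude Sobolev bound, and one optimizes in $M$. What you presumably intend---dual of (iv) composed with (iv), then Christ--Kiselev---is also correct, but it is duality and composition, not Minkowski, that does the work.
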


\begin{proof}
(i) is the classical conservation law, and (ii) (sharp Kato smoothing effect) is proved in \cite{KPV91JAMS} Lemma 2.1.

By Sobolev embedding 
\begin{equation}\label{eq:linear1}
 |W_\alpha(t)u_0(x)|^2\le C\| W_\alpha(t)u_0\|_{H^{\frac\alpha2}}^2\le C\| u_0\|_{H^{\frac\alpha2}}^2
\end{equation}
Integrating \eqref{eq:linear1} with respect $t$, we obtain  (iii).

To prove  (iv), we first write $u_0=u_{0,1}+u_{0,2}$ where $\hat u_{0,1}(\xi)=\chi_{|\xi|\le M}\hat u_0(\xi)$, for $M>0$ to be chosen.
Consider $\|\DD^\beta W_\alpha (t)u_{0,2}\|_{L^\infty_xL^2_t}$ and let $v_{0,2}$ such that $\hat v_{0,2}(\xi)=\frac{|\xi|^\beta}{|\xi|^{\alpha/2}}\hat u_{0,2}(\xi)$, so that we have $|D|^{\alpha/2}W_\alpha(t)v_{0,2}=|D|^\beta W_\alpha (t)u_{0,2}$. Using (ii), we see that, since $\beta< \alpha/2$,
\begin{equation}\label{eq:linear2}
 \| |D|^\beta W_\alpha (t)u_{0,2}\|_{L^\infty_xL^2_T}\le C\|v_{0,2}\|_{L^2}\le CM^{\beta-\alpha/2}\| u_0\|_{L^2}.
\end{equation}

Consider now $|D|^\beta u_{0,1}$. Then, $\||D|^\beta u_{0,1}\|_{H^{\alpha/2}}\le CM^{\beta+\alpha/2}\| u_0\|_{L^2}$. From (iii),

\begin{equation}\label{eq:linear3}
 \||D|^\beta W_\alpha(t)u_{0,1}\|_{L^\infty_xL^2_T}\le CT^{1/2}M^{\beta+\alpha/2}\| u_0\|_{L^2}.
\end{equation}
Choosing $M^{\alpha}=T^{-1/2}$, estimate  (iv) follows from \eqref{eq:linear2} and \eqref{eq:linear3}.

Writing $|D|=|D|^{1-\alpha/2}|D|^{\alpha/2}$ we use (iii) and (iv) and the fact that $1-\alpha/2<\alpha/2$ to prove (v).

In \cite{KPV91JAMS}, proof of Theorem 2.7, page 332, it is proved that if $|\xi|\simeq 2^k$ (or $|\xi|\lesssim 1 $ for $k=0$) we have, for $\hat u_0$ with that support
\begin{equation}
 \| W_\alpha(t)u_0\|_{L^2_xL^\infty_T}\le C2^{k(\alpha+1)/4}\|u_0\|_{L^2}.
\end{equation}
Also in \cite{KPVIUJ}, Theorem 2.5,  it is proved  that
\begin{equation}
 \|W_\alpha(t)u_0\|_{L^4_xL^\infty_T}\le C\| \DD^{1/4}u_0\|_{L^2}.
\end{equation}
Write now $\frac1{2\alpha} =\frac\theta2+\frac{1-\theta}{4}$ then $\theta=\frac2\alpha -1 \in(0,1)$, by interpolation, we get,
\begin{equation}\label{eq:linear4}
 \|W_\alpha(t)u_0\|_{L^{2\alpha}_xL^\infty_T}\le C2^{k(1-\theta)/4}2^{k(1+\alpha)\theta/4}\|u_0\|_{L^2}=C2^{k(3-\alpha)/4}\|u_0\|_{L^2}.
\end{equation}
which implies (vi). Note that \eqref{eq:linear4} is more precise for $\hat u_0$ supported in $|\xi|\simeq 2^k$.

Estimates (vii) and (viii) are proved in a similar way as (3.8) and (3.7) in \cite{KPV}. We omit their proofs.

Let $\theta\in {\cal C}_0^\infty $, $\theta\equiv1$ for $ |\xi|\le 1$, $\theta_M(\xi)=\theta(\xi/M)$, $\psi_M(\xi)=1-\theta_M(\xi)$ where $M\ge 1$. Write $h=h_{1,M}+h_{2,M}$, where $\hat h_{1,M}(t,\xi)=\theta_M(\xi)\hat h(t,\xi)$. Write $\tilde h_{2,M}$ by $(\tilde h_{2,M})^\wedge(t,\xi)=\frac{|\xi|^\beta}{|\xi|^\alpha}\hat h_{2,M}(t,\xi)$.
Thus $|D|^\beta\int_0^tW_\alpha(t-t')h_{2,M}(t')dt'=|D|^\alpha\int_0^tW_\alpha(t-t')\tilde h_{2,M}dt'$. Let $\hat\eta_M(\xi)=\frac{|\xi|^\beta}{|\xi|^\alpha}\psi_M(\xi)$. 
Using a dyadic partition of unity in frequency space and Bernstein inequality, we claim  $\int |\eta_M|\le CM^{\beta-\alpha}$.  

Thus 
\begin{equation}\label{eq:linear5}
 \|\tilde h_{2,M}\|_{L^1_xL^2_T}\le CM^{\alpha-\beta}\|h\|_{L^1_xL^2_T}
\end{equation}
so that by (vii),
\begin{equation}\label{eq:linear5bis}
 \||D|^\beta\int_0^tW_\alpha(t-t')h_{2,M}(t')dt'\|_{L^\infty_xL^2_T}\le C M^{\alpha-\beta}\|h\|_{L^1_xL^2_T}.
\end{equation}
Next, we consider $|D|^\beta\int_0^tW_\alpha(t-t')h_{1,M}(t')dt'$. Then, let us define $\hat \mu_M(\xi)=|\xi|^\beta\langle\xi\rangle\theta_M(\xi)$ where $\langle\xi\rangle^2=1+|\xi|^2$. Then, $\|\mu_M\|_{L^2}\le CM^{\beta+3/2}$. Morever, for a fixed $t$, we have by Sobolev embedding, 
\begin{equation}\label{eq:linear6}
\begin{array}{ll}
 \left||D|^\beta\int_0^tW_\alpha(t-t')h_{1,M}(t')dt'\right|&\le C\|\int_0^tW_\alpha(t-t')\langle D\rangle |D|^\beta h_{1,M}(t')dt'\|_{L^2_x}\\
&\le C\int_0^T\|\mu_M*h(t')\|_{L^2_x}dt'\\
&\le CM^{\beta+3/2}\int_0^T\|h(t')\|_{L^1_x}dt'=CM^{\beta+3/2}\|h\|_{L^1_xL^1_T}\\
&\le CT^{1/2}M^{\beta+3/2}\|h\|_{L^1_xL^2_T}.
\end{array}
\end{equation}
Hence, pick $M$ so that $M^{\alpha+3/2}=T^{-1/2}$, \eqref{eq:linear5bis} and  \eqref{eq:linear6} prove estimate (ix).

We obtain (x) by duality to the case $\beta=0$ of  (iv). Let $g\in L^1_xL^2_T$, $\|g\|_{L^1_xL^2_T}=1$. Then
\begin{equation}
 \int_0^T\int W_\alpha(t)u_0(x)\overline{g(t,x)}dxdt=\int \int_0^Tu_0(x)\overline{W_\alpha(-t)g(t,x)}dtdx.
\end{equation}
So estimate  (iv) is equivalent to
\begin{equation}
 \|\int_0^TW_\alpha(-t')g(t',x)dt'\|_{L^2_x}\le CT^\gamma\|g\|_{L^1_xL^2_T}.
\end{equation}
Fix $0<t<T$, let $g(t',x)=\chi_{[0,t]}(t')h(t,x)$, then
\begin{equation}
 \|\int_0^tW_\alpha(-t')h(t',x)dt'\|_{L^2_x}\le CT^\gamma\|h\|_{L^1_xL^2_T}.
\end{equation}
Apply now $W_\alpha(t)$ to the left hand side, which is an isometry in $L^2$, to obtain (x).

Let $P_k$ be a projection on frequencies $\simeq 2^k$ (or $\le 1$ for $k=0$), which is smooth on Fourier Transform side. Consider 
\begin{equation}
 \begin{array}{ll}
 &T_kh(x,t)=\int_0^tW_\alpha(t-t')P_k\partial_xh(\cdot,t')dt';\\[3pt]
&\tilde T_kh(x,t)=\int_0^TW_\alpha(t-t')P_k\partial_xh(\cdot,t')dt'.
\end{array}
\end{equation}
By (vi), localization in frequencies and (viii) we have, for $\frac 34 - \frac \alpha 4 < \beta^+ < \frac \alpha 2$,
\begin{equation}
 \begin{array}{ll}
    \|\tilde T_kh\|_{L^{2\alpha}_xL^\infty_T}&=\|W_\alpha(t)\int_0^TW_\alpha(-t')P_k\partial_xh(\cdot,t')dt'\|_{L^{2\alpha}_xL^\infty_T}\\[5pt]
&\le C2^{k(\beta^+-\alpha/2)}\|\int_0^T|D|^{\alpha/2}W_\alpha(-t')P_k\partial_xh(\cdot,t')dt'\|_{L^2_x}\\
&\le C 2^{k(\beta^+-\alpha/2)}\|\int_0^T|D|^{\alpha/2}W_\alpha(T-t')P_k\partial_xh(\cdot,t')dt'\|_{L^2_x}\\
&\le C 2^{k(\beta^+-\alpha/2)}\| \partial_x h\|_{L^1_xL^2_T}.
\end{array}
\end{equation}\label{eq:linear7}
Using the version of the Christ and Kiselev's lemma in Molinet and Ribaud \cite{MR}, Lemma 3, we obtain,
\begin{equation}
 \| T_kh\|_{L^{2\alpha}_xL^\infty_T}\le C  2^{k(\beta^+-\alpha/2)}  \|\partial_xh\|_{L^1_xL^2_T}.
\end{equation}
The sum of right side of \eqref{eq:linear7} being convergent, (xi) follows.
\end{proof}

We are now ready for our well-posedness result in the energy space, Theorem \ref{TH1}.
\begin{proof}[Proof of  Theorem \ref{TH1}]
Let $Z_T$ be the space defined by the maximum of the following norms, $$\sup_{0\le t\le T} \|u\|_{H^{\alpha/2}}, \ \||D|^\alpha u\|_{L^\infty_xL^2_T}, \ T^{-\gamma}\|u\|_{L^\infty_x L^2_T}, \ T^{-\gamma}\|\partial_x u\|_{L^\infty_x L^2_T}, \ \|u\|_{L^{2\alpha}_xL^\infty_T},$$
for some $\gamma>0$ to be chosen.

Fix $u_0\in H^{\alpha/2}$, $\|u_0\|_{H^{\alpha/2}}\le A$. 
For $R$, $T$ to be determined, let $B_{R,T}=\{ v\in Z_T,\ \|v\|_{Z_T}\le R\}$. Let
\begin{equation}
 \Phi_{u_0}(v)=W_\alpha(t)u_0\pm \int_0^tW_\alpha(t-t') (|v|^{2\alpha}\partial_xv)(t')dt'.
\end{equation}

We will show that, given $A$, we can find $R$, $T$ such that $\Phi_{u_0}(v): B_{R,T}\to B_{R,T}$ and is a contraction there. First, note that (i), (ii), (iii), (iv) (with $\beta=1/2$), (v), (vi) show that $\|W_\alpha(t)u_0\|_{Z_T}\le CA$, for some $\gamma>0$.

Now, we work on the Duhamel term. It is easy to see that, using (vii), (viii), (ix) (with $\beta=0$ and $\beta=1$), (x), (xi), we have 
\begin{equation}
 \begin{array}{ll}
 \|\int_0^tW_\alpha(t-t')|v|^{2\alpha}\partial_xvdt'\|_{Z_T}&\le C\left\{ \||v|^{2\alpha}\partial_xv\|_{L^1_xL^2_T}+\||v|^{2\alpha}v\|_{L^1_xL^2_T}\right\}\\[6pt]
&\le CT^{\gamma}\|v\|^{2\alpha}_{L^{2\alpha}_xL^\infty_T}\|v\|_{Z_T},
\end{array}
\end{equation}
for some $\gamma>0$.
We now choose $R=2CA$ and $T$ so that $C(2CA)^{2\alpha}T^\gamma\le CA=\frac 12 R$, which gives $\Phi_{u_0}:B_{R,T}\to B_{R,T}$.

For the contraction property, we estimate,
\begin{equation}
 \left| |v|^{2\alpha}\partial_xv-|w|^{2\alpha}\partial_xw\right|\le \left| (|v|^{2\alpha}-|w|^{2\alpha})\partial_xw\right|+\left| |v|^{2\alpha}\partial_x(v-w)\right|
\end{equation}
since $\left| |v|^{2\alpha}-|w|^{2\alpha}\right|\le C|v-w|(|v|^{2\alpha-1}+|w|^{2\alpha-1})$ and $\alpha>1$, this allows to conclude the proof (we argue similarly for $\left| |v|^{2\alpha}v-|w|^{2\alpha}w\right|$).
\end{proof}

\begin{remark}\label{RE:bip}
From Theorem \ref{TH1}, it follows that for any initial data in $H^{\frac \alpha 2}$, we can define a maximal solution to the problem. Moreover, either this solution is globally defined or it blows  up in finite time.

From the previous arguments and estimates, it is standard to obtain the property of persistence of regularity, i.e. if the initial data belongs to some $H^s$, for $s> \frac \alpha 2$, then the maximal solution $u(t)$ of the equation belongs to $H^s$ as long at it exists in $H^{\frac \alpha 2}$. In particular, by density arguments and continuous dependence upon the initial data, we can approximate any $H^{\frac \alpha 2}$ by smooth solutions in $C([0,T],H^{\frac \alpha 2})$, which allows us to prove rigorously the conservation of mass and energy \eqref{inv1} and \eqref{inv2}.
\end{remark}

\subsection{Weak continuity of the flow}
\begin{theorem}\label{TH3}
Let $1< \alpha \leq 2$.
Let $\{u_n\}_n$ be a sequence of $H^{\frac \alpha 2}$ solutions of \eqref{eq:dgBOpm} in $[0,T]$; assume that $u_n(0)\rightharpoonup u_0$ in $H^{\frac \alpha 2}$ weak.
Assume also that (without loss of generality) $\|u_n(0)\|_{H^{\frac \alpha 2}}\leq A$,
$\|u_0\|_{H^{\frac \alpha 2}}\leq A$, $T\leq T(A)$ as in Theorem \ref{TH1}. Then, if $u(t)$ is the solution of \eqref{eq:dgBOpm} corresponding to $u_0$, we have
$$\forall t\in [0,T],\quad 
u_n(t)\rightharpoonup u(t) \quad \text{in $H^{\frac \alpha 2}$ weak.}
$$
\end{theorem}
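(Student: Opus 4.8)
The plan is to exploit the fact that the solution map is built by a fixed-point argument in the space $Z_T$, whose norm controls quantities that are stronger than the $H^{\alpha/2}$ norm used in the weak convergence; this will let me pass to the limit in the Duhamel formula. First I would use the uniform bound $\|u_n(0)\|_{H^{\alpha/2}}\le A$ together with Theorem \ref{TH1} (more precisely, the a priori bound $\|u_n\|_{Z_T}\le R=2CA$ coming from the contraction argument) to obtain that the sequence $\{u_n\}$ is bounded in $Z_T$, hence in particular in $C([0,T],H^{\alpha/2})$ and in $L^{2\alpha}_xL^\infty_T$. By the Banach--Alaoglu theorem and a diagonal/subsequence argument, after extraction $u_n$ converges to some limit $v$ in the appropriate weak-$*$ topologies; it then suffices to identify $v$ with the solution $u$ emanating from $u_0$, since uniqueness of such a limit forces the whole sequence to converge.

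The core step is to pass to the limit in the integral equation
\[
u_n(t)=W_\alpha(t)u_n(0)\pm\int_0^t W_\alpha(t-t')\bigl(|u_n|^{2\alpha}\px u_n\bigr)(t')\,dt'.
\]
For the linear term, $u_n(0)\rightharpoonup u_0$ in $H^{\alpha/2}$ and the unitarity of $W_\alpha(t)$ on $H^{\alpha/2}$ give $W_\alpha(t)u_n(0)\rightharpoonup W_\alpha(t)u_0$ for each fixed $t$. For the Duhamel term, I would first upgrade the weak convergence to strong local-in-space convergence: the uniform $Z_T$ bound controls $\||D|^\alpha u_n\|_{L^\infty_xL^2_T}$ and $\|\px u_n\|_{L^\infty_xL^2_T}$, which combined with $u_n$ bounded in $C([0,T],H^{\alpha/2})$ yields, via a Rellich--Kondrachov / Aubin--Lions type compactness argument on bounded spatial regions (using the equation to get time-equicontinuity in a weak norm), strong convergence $u_n\to v$ in $L^2_{\mathrm{loc}}$ and pointwise a.e. on $[0,T]\times\RR$. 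Together with the uniform $L^{2\alpha}_xL^\infty_T$ bound this gives, by interpolation and dominated-convergence type arguments, that $|u_n|^{2\alpha}\px u_n\rightharpoonup |v|^{2\alpha}\px v$ in, say, $L^1_xL^2_T$ weak or a related dual space against which $\int_0^t W_\alpha(t-t')(\cdot)\,dt'$ acts continuously into $H^{\alpha/2}$ (by estimates (viii)--(x) of Lemma \ref{le:linear}). Hence the Duhamel term of $u_n$ converges weakly in $H^{\alpha/2}$ to that of $v$, and passing to the limit in the integral equation shows $v$ solves \eqref{eq:dgBOpm} with data $u_0$; by the uniqueness part of Theorem \ref{TH1}, $v=u$.

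Finally, since every subsequence of $\{u_n\}$ has a further subsequence converging weakly in $H^{\alpha/2}$ (for each fixed $t$) to the same limit $u(t)$, the full sequence converges: $u_n(t)\rightharpoonup u(t)$ in $H^{\alpha/2}$ for all $t\in[0,T]$. The main obstacle I anticipate is the nonlinear term: one must genuinely promote weak $H^{\alpha/2}$ convergence to enough compactness (strong $L^2_{\mathrm{loc}}$ and a.e.\ convergence, plus control of $\px u_n$) to take the limit inside $|u_n|^{2\alpha}\px u_n$, and care is needed because $2\alpha$ need not be an integer, so one relies on the Lipschitz-type bound $\bigl||v|^{2\alpha}-|w|^{2\alpha}\bigr|\le C|v-w|(|v|^{2\alpha-1}+|w|^{2\alpha-1})$ already used in the proof of Theorem \ref{TH1}, combined with the uniform $L^{2\alpha}_xL^\infty_T$ bounds to justify the passage to the limit.
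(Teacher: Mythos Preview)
Your approach is different from the paper's and, while plausible, leaves the hardest step under-specified. The paper's proof is much shorter: it first observes that a minor modification of the contraction argument in Theorem~\ref{TH1} yields local well-posedness (with continuous dependence) in $H^{\alpha'/2}$ for any $1<\alpha'<\alpha$, and then simply invokes the argument of Theorem~5 in \cite{KM}. The point is that weak convergence $u_n(0)\rightharpoonup u_0$ in $H^{\alpha/2}$, together with the uniform $H^{\alpha/2}$ bound, can be upgraded (via compact Sobolev embedding and a spatial cutoff) to strong convergence in the weaker topology $H^{\alpha'/2}$; then \emph{continuous dependence} of the flow in $H^{\alpha'/2}$ does all the work, and one recovers weak $H^{\alpha/2}$ convergence at later times from the uniform $H^{\alpha/2}$ bound on the solutions. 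This completely bypasses any direct limit in the Duhamel nonlinearity.

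Your route instead tries to pass to the limit directly in $|u_n|^{2\alpha}\partial_x u_n$. This can in principle be made to work, but the phrase ``converges in $L^1_xL^2_T$ weak or a related dual space'' is where the argument is shaky: $L^1$ is not reflexive, so there is no Banach--Alaoglu subsequence in $L^1_xL^2_T$, and you have not identified a concrete pairing in which both the convergence of the nonlinearity and the boundedness of the Duhamel operator hold simultaneously. To carry your program through you would need to (i) use the embedding $H^{\alpha/2}\hookrightarrow L^\infty$ (valid since $\alpha>1$) to get uniform $L^\infty_{t,x}$ bounds on $u_n$, (ii) combine a.e.\ convergence with these bounds to obtain strong convergence of $|u_n|^{2\alpha}$ in $L^p_{\mathrm{loc}}$, (iii) pair this with weak convergence of $\partial_x u_n$ against test functions of the form $W_\alpha(t'-t)\phi$ with $\phi$ smooth and decaying, and (iv) control the spatial tails separately. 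All of this is doable but laborious; the paper's trick of dropping to $H^{\alpha'/2}$ and using continuous dependence there is precisely the idea that replaces these estimates with a one-line reference.
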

Note that for $\alpha=1$, the result is proved in the final remark of \cite{CK} (see also \cite{GM}) and for $\alpha =2$, it was proved by different arguments in \cite{MMjmpa}.

\begin{proof}
For $1<\alpha\leq 2$ we remark that  a slight modification of the proof of Theorem \ref{TH1} gives us the local well-posedness in $H^{\frac {\alpha'}2}$ for $1<\alpha'<\alpha$.
Then, the proof is identical to the one in Theorem 5 of \cite{KM}, using this remark.
\end{proof}

\section{Properties of the ground states and perturbation arguments}
In this section, we first recall or prove general results about ground states for \eqref{dgBO} for all $1\leq \alpha \leq 2$, mainly by classical variational arguments. Then, we prove specific results for $\alpha$ close to $2$ by perturbation of the well-known results for gKdV.

\subsection{Existence and first properties of the ground states}
\begin{proposition}\label{le:Q}
Let $1\leq \alpha \leq 2$. There exists a solution $Q\in H^{\frac \alpha 2}(\RR) \cap C^{\infty}(\RR)$ of \eqref{eq:Q} which satisfies the following properties
\begin{itemize}
\item[{\rm (i)}] First properties:
$Q>0$ on $\RR$, $Q$ is even,  $Q'<0$ on $(0,+\infty)$.
\item[{\rm (ii)}] Variational properties. The infima
\begin{equation}\label{eq:mini1}
J_1 = \inf\left\{ \frac {\left(\int | D^{\frac \alpha 2} v |^2\right)\left(\int v^2\right)^{\alpha}}
{\int |v|^{2\alpha +2}},\text{ for } v\in H^{\frac \alpha 2}\right\},
\end{equation}
\begin{equation}\label{eq:mini2}
J_2 = \inf\left\{E(v),\text{ for } v\in H^{\frac \alpha 2} \text{ such that } \int v^2=\int Q^2\right\},
\end{equation}
are attained at $Q$ ($J_2=0$).
\item[{\rm (iii)}] Linearized operator: let $L$ be the unbounded operator defined on $L^2(\RR)$ by 
$$
L v = \DD^\alpha v + v -  Q^{2\alpha} v.
$$
Then, $L$ has only one negative eigenvalue $\mu_0$, associated to an even eigenfunction $\chi_0>0$, $LQ'=0$ and the continuous spectrum of $L$ is $[1,+\infty)$. Moreover, the following holds
\begin{equation}\label{eq:min}
\inf\left\{(L\eta,\eta),\text{ for } \eta\in H^{\frac \alpha 2} \text{ such that } \int \eta Q = 0\right\}=0.
\end{equation}
Finally, let $Q_\lambda(x) = \lambda^{-\frac 1 \alpha} Q(\lambda^{-\frac 2 \alpha} x)$ for all $\lambda>0$  and
\begin{equation}\label{eq:scal}
\Lambda Q  = - \left(\frac d{d\lambda} Q_\lambda\right)_{\lambda = 1 } =
\frac 1\alpha \left( Q + 2 x Q'\right)\quad \text{then}\quad
L ( \Lambda Q ) = - 2 Q.
\end{equation}
\item[{\rm (iv)}] Decay properties:
\begin{equation}\label{eq:decay}
	\forall x\in \RR,\quad Q(x) +(1+|x|) |Q'(x)|+(1+|x|^2)|Q''(x)|\leq \frac C{(1+x^2)^{\frac 12 (1+\alpha)}}.
\end{equation}
\end{itemize}
\end{proposition}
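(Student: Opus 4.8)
The plan is to establish the four items roughly in the stated order, since each relies on the previous ones.

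\textbf{Existence and item (i).} First I would set up the variational problem \eqref{eq:mini1}: by the sharp Gagliardo--Nirenberg inequality \eqref{gn}, the functional $\frac{(\int|D^{\alpha/2}v|^2)(\int v^2)^\alpha}{\int|v|^{2\alpha+2}}$ is bounded below by a positive constant, and a minimizing sequence can be taken nonnegative, radially decreasing (by symmetric decreasing rearrangement, which decreases the numerator and preserves the denominator), and normalized by scaling. Compactness of the embedding $H^{\alpha/2}_{\mathrm{rad}}\hookrightarrow L^{2\alpha+2}$ on radial decreasing functions (a Strauss-type compactness argument, or the concentration-compactness method of Weinstein \cite{We85,We87}) then yields a minimizer, whose Euler--Lagrange equation, after scaling the parameters, is exactly \eqref{eq:Q}. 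Positivity $Q>0$ and strict monotonicity $Q'<0$ on $(0,\infty)$ for the nonlocal operator $\DD^\alpha$ would be obtained from the representation of $(\DD^\alpha+1)^{-1}$ as convolution against a positive, radially decreasing kernel, applied to $\tfrac1{2\alpha+1}Q^{2\alpha+1}\ge 0$; this also gives $Q\in C^\infty$ by bootstrapping elliptic regularity. Item (ii) is then immediate: the attained value of $J_1$ identifies the best constant $C_\alpha$, and the Pohozaev/virial identities obtained by testing \eqref{eq:Q} against $Q$ and against $xQ'$ pin down the ratios of $\int|D^{\alpha/2}Q|^2$, $\int Q^2$, $\int Q^{2\alpha+2}$, which force $E(Q)=0$; the minimization \eqref{eq:mini2} then follows from \eqref{gn} with the constant $C_\alpha$ substituted.

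\textbf{Item (iii).} The spectral statement is the standard structure of the linearized operator around a ground state. That $LQ'=0$ follows from differentiating \eqref{eq:Q} in $x$; that $L(\Lambda Q)=-2Q$ follows from differentiating the rescaled equation in $\lambda$ at $\lambda=1$, using $\DD^\alpha(\Lambda Q)=\frac1\alpha(\DD^\alpha Q+2x\partial_x\DD^\alpha Q)+2\DD^\alpha Q$ and the identity $\DD^\alpha(x\partial_x f)=x\partial_x\DD^\alpha f+\alpha\DD^\alpha f$ for the homogeneous symbol $|\xi|^\alpha$. The continuous spectrum being $[1,\infty)$ comes from Weyl's theorem since $Q^{2\alpha}$ decays and is a relatively compact perturbation of $\DD^\alpha+1$. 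For the single negative eigenvalue: $L$ is bounded below, has discrete spectrum in $(-\infty,1)$, and testing against $Q$ gives $(LQ,Q)=\int Q^2-2\alpha\int Q^{2\alpha+2}/(2\alpha+1)<0$ using the Pohozaev relations, so there is at least one negative eigenvalue; the lowest eigenfunction $\chi_0$ can be taken positive and even by a Perron--Frobenius argument for the positivity-improving resolvent, and simplicity of $\mu_0$ together with the fact that $Q'$ (which changes sign once, hence is the second eigenfunction in a Sturm-oscillation sense adapted to the nonlocal setting, cf. Frank--Lenzmann) has eigenvalue $0$ forces $0$ to be the bottom of the rest of the discrete spectrum and hence $\mu_0$ to be the \emph{only} negative eigenvalue. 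The coercivity statement \eqref{eq:min} is the classical consequence: since $J_1$ is attained at $Q$, the second variation of the Gagliardo--Nirenberg quotient at $Q$ is nonnegative on the tangent space, and an explicit computation rewrites this as $(L\eta,\eta)\ge 0$ for $\eta\perp Q$; equality at $\eta\propto \Lambda Q$ (projected orthogonally to $Q$, using $\int Q\,\Lambda Q=-\tfrac1\alpha(\tfrac1\alpha-1)\cdots$, more precisely $\int(\Lambda Q)Q=\bigl(\tfrac1\alpha-\tfrac1{?}\bigr)$ — in any case a nonzero multiple, adjusted by subtracting the $Q$-component) shows the infimum is exactly $0$ and not positive.

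\textbf{Item (iv).} The pointwise decay $Q(x)\lesssim (1+x^2)^{-(1+\alpha)/2}$ is the expected algebraic decay for solutions of nonlocal equations with symbol $|\xi|^\alpha$: the kernel $K$ of $(\DD^\alpha+1)^{-1}$ satisfies $K(x)\sim c_\alpha|x|^{-(1+\alpha)}$ as $|x|\to\infty$ (from the singularity $|\xi|^\alpha$ of the symbol at $\xi=0$, which is only finitely smooth), so writing $Q=\tfrac1{2\alpha+1}K*Q^{2\alpha+1}$ and running a bootstrap — first $Q\in L^\infty$ with some decay from $Q\in H^{\alpha/2}$, then iterating using that $Q^{2\alpha+1}$ is integrable and decays faster — produces the sharp rate; the derivative bounds follow by differentiating the convolution identity.

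\textbf{Main obstacle.} I expect the hardest part to be the spectral analysis in item (iii) for \emph{nonlocal} $\DD^\alpha$ with $1\le\alpha<2$: one cannot invoke ODE/Sturm--Liouville theory to count eigenvalues and establish simplicity and the absence of a second negative eigenvalue. This requires the nonlocal replacement (Perron--Frobenius for $e^{-tL}$ or for the resolvent, plus a Sturm-oscillation theory à la Frank--Lenzmann for $\DD^\alpha$), and it is precisely here that, for general $\alpha$, the nondegeneracy "$\ker L=\mathrm{span}\,Q'$" and the one-negative-eigenvalue property are delicate — consistent with the paper's later remark that for $\alpha$ close to $2$ these are recovered by perturbation from the explicitly known gKdV case \eqref{explicitQ}. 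The decay estimate \eqref{eq:decay}, by contrast, is a routine (if somewhat technical) convolution bootstrap once the kernel asymptotics of $(\DD^\alpha+1)^{-1}$ are in hand.
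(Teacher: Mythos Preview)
Your treatment of (i) and (ii) is aligned with the paper's: Weinstein's variational construction via minimization of $j_1$, symmetric decreasing rearrangement justified through the kernel positivity of Lemma~\ref{le:GK}, and concentration--compactness in dimension one.

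For (iii) you have the right ingredients but in the wrong order, and this leads you to misidentify the obstacle. The second-variation computation you place at the end --- $(L\eta,\eta)\ge 0$ whenever $\int\eta\, Q=0$ --- is exactly how the paper (following Weinstein \cite{We85,We87}) obtains \eqref{eq:min}, and by min-max this \emph{immediately} shows $L$ has at most one negative eigenvalue, since there is a codimension-one subspace on which $L\ge 0$. Combined with $(LQ,Q)<0$ this gives exactly one. No Sturm-oscillation or Frank--Lenzmann machinery is needed; that machinery targets the stronger claim $\ker L=\mathrm{span}\,Q'$, which this proposition does \emph{not} assert --- the paper explicitly isolates the kernel property as open for general $\alpha$ (Definition~\ref{cj:kernel}) and proves it only for $\alpha$ near $2$ by perturbation (Proposition~\ref{pr:close2}). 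So your ``main obstacle'' is not an obstacle to \emph{this} proposition. A small correction: your computation of $\int(\Lambda Q)Q$ is muddled; by $L^2$-criticality $\int(\Lambda Q)Q=\tfrac1\alpha\int(Q^2+2xQ'Q)=\tfrac1\alpha(\int Q^2-\int Q^2)=0$ exactly, so $\Lambda Q\perp Q$ already and $(L\Lambda Q,\Lambda Q)=-2\int Q\,\Lambda Q=0$ shows the infimum in \eqref{eq:min} is attained.

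For (iv) your route genuinely differs from the paper's. You propose bootstrapping the convolution identity $Q=\tfrac1{2\alpha+1}K*Q^{2\alpha+1}$ with $K=(\DD^\alpha+1)^{-1}$, using the $|x|^{-(1+\alpha)}$ tail of $K$. The paper instead passes to the Caffarelli--Silvestre extension of $\DD^\alpha$ to $\RR^2_+$, recasts \eqref{eq:Q} as a degenerate Neumann problem there, and reads off the decay from an explicit Amick--Toland--type Green's function $G_\alpha(x,y)$. Both approaches are valid; yours is arguably more direct once the resolvent-kernel asymptotics are established, while the paper's has the advantage of plugging into the existing Amick--Toland analysis essentially verbatim.
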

\begin{remark}
In the following we call ground state an even positive solution of  \eqref{eq:Q} in the sense of Proposition \ref{le:Q}. 

The uniqueness (up to translations) of   a positive solution $Q$ of \eqref{eq:Q}  is an open question. 
Even weaker versions of the uniqueness property are open : uniqueness of a ground state, or uniqueness in some neighborhood of $Q$.  
A related open question concerns the kernel of $L$, see below.
\end{remark}

Before proving the above proposition, we recall the following classical result.

\begin{lemma}\label{le:GK} Let $1\leq \alpha \leq 2$.
Let $K(x)$ be such that $\hat K(\xi)=e^{-|\xi|^\alpha}$. Then, $K$ is a real and even function, $K>0$ on $\RR$ and $K'(x)<0$ for $x>0$.
\end{lemma}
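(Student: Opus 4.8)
The strategy is to realize $K$ as an integral superposition of Gaussians using the subordination formula for the semigroup generated by $|D|^\alpha$. Since $\alpha \in [1,2]$ (and for $\alpha = 2$ one has the explicit Gaussian, so assume $\alpha \in [1,2)$), the function $s \mapsto e^{-s^\alpha}$ on $[0,\infty)$ is completely monotone; equivalently, there is a probability density $\mu_\alpha \geq 0$ on $(0,\infty)$ (the density of a one-sided stable law, up to normalization) such that
\begin{equation}\label{eq:subord}
e^{-|\xi|^\alpha} = \int_0^\infty e^{-t|\xi|^2}\, \mu_\alpha(t)\, dt, \qquad \xi \in \RR.
\end{equation}
Taking inverse Fourier transforms and using that the inverse transform of $\xi \mapsto e^{-t|\xi|^2}$ is the positive Gaussian $(4\pi t)^{-1/2} e^{-x^2/(4t)}$, one gets
\begin{equation}\label{eq:Krep}
K(x) = c\int_0^\infty (4\pi t)^{-1/2} e^{-x^2/(4t)}\, \mu_\alpha(t)\, dt,
\end{equation}
with $c>0$ a normalization constant. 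From \eqref{eq:Krep} everything follows at once: $K$ is real and even because the integrand is; $K>0$ on $\RR$ because the integrand is strictly positive and $\mu_\alpha$ is not identically zero; and differentiating under the integral sign,
\[
K'(x) = c\int_0^\infty (4\pi t)^{-1/2}\Bigl(-\frac{x}{2t}\Bigr) e^{-x^2/(4t)}\, \mu_\alpha(t)\, dt,
\]
which is strictly negative for $x>0$ since the integrand is strictly negative there. Differentiation under the integral is justified by dominated convergence, using the rapid decay of the Gaussian and the fact that $\mu_\alpha$ is a finite measure with the needed moment bounds near $t=0$ (the stable density decays super-exponentially as $t \to 0^+$).

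The main obstacle — really the only nontrivial point — is establishing the positivity of the subordination kernel $\mu_\alpha$, i.e. justifying \eqref{eq:subord}. This is the statement that $t \mapsto e^{-t^{\alpha/2}}$ — hence, after reparametrizing, $s \mapsto e^{-s^\beta}$ for $\beta = \alpha/2 \in [1/2,1) \subset (0,1]$ — is completely monotone, which is a classical fact (Bernstein's theorem applied to the Laplace exponent $s^\beta$, whose derivative $\beta s^{\beta-1}$ is completely monotone; equivalently, $s^\beta$ is a Bernstein function, so $e^{-s^\beta}$ is completely monotone). By the Bernstein–Widder theorem a completely monotone function is the Laplace transform of a positive measure, which gives \eqref{eq:subord}. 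Alternatively, and perhaps more self-containedly, one can avoid stable laws entirely: since the paper only needs $1 \le \alpha \le 2$, write $\alpha = 2\beta$ with $\beta \in [1/2,1]$ and first note the elementary identity $e^{-\lambda^\beta} = \frac{1}{\Gamma(1-\beta)}\int_0^\infty (1 - e^{-\lambda r}) \beta r^{-1-\beta}\, dr$ is $\ge$-preserving, or simply invoke the standard fact that $K$ is (up to scaling) the heat kernel at time $1$ of the fractional diffusion and is positive; but the cleanest write-up is via \eqref{eq:subord}.

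An alternative route, should one wish to bypass subordination: prove $K>0$ and monotonicity directly from the ODE/pseudodifferential characterization $|D|^\alpha K + \text{(lower order)} $ — but this is harder for nonlocal operators and is not needed, since the Fourier-analytic proof above is short and complete. I would therefore present the subordination argument, citing Bernstein's theorem for the positivity of $\mu_\alpha$, and carry out the sign computations on $K$ and $K'$ from \eqref{eq:Krep} as above; these are the routine calculations I would not grind through in detail.
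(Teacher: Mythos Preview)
Your subordination argument is correct and, in fact, more self-contained than what the paper does. The paper does not actually prove the lemma: it notes that $K$ is explicit for $\alpha=1,2$, and for $1<\alpha<2$ simply cites the probabilistic literature, identifying $K$ as the density of a symmetric stable law (Gnedenko--Kolmogorov for positivity) and invoking Yamazato's theorem on unimodality of class~$L$ distributions for $K'(x)<0$.

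Your approach is genuinely different and, for the range $1\le\alpha\le 2$ at hand, arguably better. The key observation that $\alpha/2\in[1/2,1]$ puts $s\mapsto s^{\alpha/2}$ in the Bernstein class, so $e^{-s^{\alpha/2}}$ is completely monotone and Bernstein--Widder yields the positive subordinating measure $\mu_\alpha$. The representation \eqref{eq:Krep} as a mixture of Gaussians then makes both positivity and strict monotonicity on $(0,\infty)$ transparent one-line computations, whereas Yamazato's unimodality result is a nontrivial theorem whose proof is considerably more involved. The trade-off is scope: Yamazato covers all of class~$L$ (hence all stable laws, any $\alpha\in(0,2]$), while your subordination trick is tailored to $\alpha\le 2$ --- but that is exactly the regime of the paper. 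Two small cosmetic points: the normalization constant $c$ in \eqref{eq:Krep} is actually $1$ with the paper's Fourier convention, and $\mu_\alpha$ is a measure (a Dirac mass when $\alpha=2$) rather than a density in general, though for $\alpha<2$ it does have a smooth density.
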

\begin{proof}
For $\alpha=1,2$, $K(x)$ is known explicitly.
This result is not trivial for $1<\alpha<2$ but known in probabilistic literature: $K$ is the law of stable distribution, special cases of distribution of class $L$ (see Gnedenko-Kolmogorov \cite{GK54} Theorem page 164). Yamazato \cite{Y78} proved the unimodality of distribution of class $L$, i.e. $K'(x)<0$ for $x>0$.
\end{proof}

\begin{remark}\label{rk:tnt}
It follows in particular from the previous lemma that the operator $|D|^{\alpha}$ for $1\leq \alpha \leq 2$ satisfies properties 
(L1)$_{\alpha/2}$, (L2) and (L3) of \cite{We87}.
\end{remark}
 
We also recall  the following identities satisfied by any solution of \eqref{eq:Q}.

\begin{lemma}\label{LE:app}
Let $Q\in H^{\frac \alpha 2}$ be a solution of \eqref{eq:Q}.
Then,
\begin{equation}\label{eq:app1}
\int Q^2 = \alpha \int |D^{\frac \alpha 2} Q|^2 = \frac {\alpha}{(2\alpha +1)(\alpha +1)} \int Q^{2 \alpha +2}.
\end{equation}
In particular,
$$
E(Q) = \int |D^{\frac \alpha 2} Q|^2 - \frac 1{(2\alpha +1)(\alpha +1)} \int Q^{2 \alpha +2}=0.
$$
\end{lemma}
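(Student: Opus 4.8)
\emph{Strategy.} I would derive \eqref{eq:app1} from two scalar relations satisfied by any solution of \eqref{eq:Q}: the one obtained by pairing the equation with $Q$, and a Pohozaev-type relation reflecting the dilation structure of \eqref{eq:Q}. A suitable linear combination of the two then determines $\int Q^2$, $\int|D^{\frac\alpha2}Q|^2$ and $\int Q^{2\alpha+2}$ up to the stated constants, and $E(Q)=0$ is immediate. For the first relation, multiply \eqref{eq:Q} by $Q$ and integrate over $\RR$; by Plancherel $\int(\DD^\alpha Q)\,Q=\int|D^{\frac\alpha2}Q|^2$, so
\[
\int|D^{\tfrac\alpha2}Q|^2+\int Q^2=\tfrac1{2\alpha+1}\int Q^{2\alpha+2},
\]
which I will call identity $(\mathrm A)$.

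\emph{Second relation and conclusion.} Observe that \eqref{eq:Q} is the Euler--Lagrange equation of
\[
S(v)=\tfrac12\int|D^{\tfrac\alpha2}v|^2+\tfrac12\int v^2-\tfrac1{(2\alpha+1)(2\alpha+2)}\int v^{2\alpha+2},
\]
i.e.\ $S'(Q)=0$. Evaluating $S$ along the dilation path $v_\lambda(x)=Q(x/\lambda)$, $\lambda>0$, and using $\int|D^{\frac\alpha2}v_\lambda|^2=\lambda^{1-\alpha}\int|D^{\frac\alpha2}Q|^2$, $\int v_\lambda^2=\lambda\int Q^2$ and $\int v_\lambda^{2\alpha+2}=\lambda\int Q^{2\alpha+2}$, differentiation at $\lambda=1$ gives, on one hand,
\[
\tfrac{d}{d\lambda}\big|_{\lambda=1}S(v_\lambda)=\tfrac{1-\alpha}{2}\int|D^{\tfrac\alpha2}Q|^2+\tfrac12\int Q^2-\tfrac1{(2\alpha+1)(2\alpha+2)}\int Q^{2\alpha+2},
\]
and on the other hand $\tfrac{d}{d\lambda}\big|_{\lambda=1}S(v_\lambda)=\langle S'(Q),-xQ'\rangle=0$; equivalently, one may multiply \eqref{eq:Q} by $xQ'$ and integrate, using the commutator identity $\int(\DD^\alpha Q)(xQ')=\tfrac{\alpha-1}{2}\int|D^{\frac\alpha2}Q|^2$ (a consequence of the behaviour of $\DD^\alpha$ under dilations). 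Equating the two expressions and eliminating $\int Q^{2\alpha+2}$ by means of $(\mathrm A)$, an elementary computation yields $\alpha\int|D^{\frac\alpha2}Q|^2=\int Q^2$; inserting this back into $(\mathrm A)$ gives $\int|D^{\frac\alpha2}Q|^2=\tfrac1{(2\alpha+1)(\alpha+1)}\int Q^{2\alpha+2}$, which is precisely \eqref{eq:app1}. Then $E(Q)=\int|D^{\frac\alpha2}Q|^2-\tfrac1{(2\alpha+1)(\alpha+1)}\int Q^{2\alpha+2}=0$.

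\emph{Main obstacle.} The algebra is routine; the delicate point is the rigorous justification of these manipulations, since a priori $Q$ is only assumed to lie in $H^{\frac\alpha2}$, whereas the integrations by parts, the Plancherel identity for the nonlocal term, and the differentiation along the dilation path (in particular the use of $xQ'$ as an admissible variation) all require $Q$ to be smooth and to decay suitably at infinity. This is supplied by a standard bootstrap on \eqref{eq:Q}: since $\alpha\ge1$ we have $H^{\frac\alpha2}\hookrightarrow L^p$ for $2\le p<\infty$, so $Q^{2\alpha+1}\in L^2$ and \eqref{eq:Q} forces $(1+|\xi|^\alpha)\widehat Q\in L^2$, i.e.\ $Q\in H^\alpha$; iterating, $Q\in H^s$ for all $s$, while writing $Q=\tfrac1{2\alpha+1}K*Q^{2\alpha+1}$ with $K=(\DD^\alpha+1)^{-1}$ (whose kernel decays like $|x|^{-1-\alpha}$, cf.\ Lemma \ref{le:GK} and Remark \ref{rk:tnt}) propagates polynomial decay to $Q$ and its derivatives. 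Alternatively, one carries out each computation with smooth frequency and spatial cutoffs and passes to the limit. Granting this regularity, the steps above are immediate and the lemma follows.
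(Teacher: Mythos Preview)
Your proof is correct and follows essentially the same approach as the paper: the paper multiplies \eqref{eq:Q} by $Q$ to obtain your identity $(\mathrm A)$, and then multiplies \eqref{eq:Q} by $xQ'$ using the very commutator identity $\int(\DD^\alpha Q)(xQ')=\tfrac{\alpha-1}{2}\int|D^{\frac\alpha2}Q|^2$ that you state, arriving at the same second relation (your variational derivation via $\tfrac{d}{d\lambda}S(Q(\cdot/\lambda))$ is an equivalent repackaging, since $\tfrac{d}{d\lambda}\big|_{\lambda=1}Q(x/\lambda)=-xQ'$). The paper does not spell out the regularity bootstrap you include, but the needed smoothness and decay of $Q$ are established elsewhere in the paper (Proposition~\ref{le:Q}(iv)).
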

\begin{proof}
Multiplying equation \eqref{eq:Q} by $Q$ and integrating, we first find
\begin{equation}\label{eq:f1}
	\int |D^{\frac \alpha 2} Q|^2 + \int Q^2 = \frac 1{2 \alpha +1} \int Q^{2 \alpha +2}.
\end{equation}

Second, note that  by Plancherel and integration by parts, for all $u\in \mathcal{S}$, one has
\begin{equation*}
	\int (-|D|^{\alpha} u) (x u_x)   
	= -\frac {\alpha -1} 2 \int |D^{\frac \alpha 2} u|^2. 
\end{equation*}
Thus, multiplying the equation of $Q$ by $x Q'$ and integrating, we obtain
\begin{equation}\label{eq:f2}
	(\alpha-1) \int |D^{\frac \alpha 2} Q|^2 - \int Q^2 = - \frac 1{(2\alpha +1)(\alpha +1)} \int Q^{2 \alpha +2}.
\end{equation}
Combining \eqref{eq:f1} and \eqref{eq:f2}, we find \eqref{eq:app1}.
\end{proof}

\begin{proof}[Sketch of the proof of Proposition \ref{le:Q}]
The existence of a solution $Q$ of \eqref{eq:Q} satisfying (i), (ii) and (iii) follow from Weinstein's arguments \cite{We85, We86, We87, We87b} and Lemma \ref{le:GK}. Property (iv) follows from Amick and Toland's arguments, see \cite{AT}.
 
Let us sketch the proofs.
(i): Follows by Theorem 3.2 in \cite{We87} and  remark \ref{rk:tnt}.

(ii): As in \cite{We85, We87b}, a suitable solution $Q(x)$ is obtained by minimizing the functional $j_1(v)$, defined for $v\in H^{\frac \alpha 2}$ by
$$
	j_1(v)= \frac {\left(\int | D^{\frac \alpha 2} v |^2\right)\left(\int v^2\right)^{\alpha}}
{\int |v|^{2\alpha +2}}.
$$
Note that by Theorem XIII.50  in \cite{RS}, Lemma \ref{le:GK}, Remark \ref{rk:tnt}, and Lemma 6 in \cite{We87}, for all $v\in H^{\frac \alpha 2}$,
$$
(|D|^{\alpha} |v|^*,|v|^*) \leq (|D|^{\alpha} v,v),
$$
where $v^*$ the symmetric decreasing rearrangement of $v$.
Thus, in the minimization procedure, one can always assume that the minimization sequence is composed of nonnegative and even functions.
It is not possible here to use the decay properties of $H^1$ radial functions as in \cite{We85}, since such an argument is limited to space dimensions larger than or equal to $2$ . One rather uses the concentration--compactness approach (\cite{PLL}) on a suitable continuous family of variational problems related to $j_1(v)$, as in \cite{We87}.

Once a nonnegative, symmetric decreasing, minimizer $\psi$ of $j_1$ is constructed, we verify that for some constants $a,b>0$, 
$Q(x) = a \psi(bx)$ satisfies 
$$
\DD^\alpha Q + Q - \tfrac 1{2\alpha+1} {Q^{2\alpha+1}} =0, \quad
Q\in H^{\frac \alpha 2},\quad Q>0,\quad Q'<0 \text{ on $(0,+\infty)$},\quad  Q \text{ even},
$$
and $j_1(Q)=\inf\{j_1(v) \ \text{for } v\in H^{\frac \alpha 2}\}$. By Lemma \ref{LE:app}, we have $E(Q)=0$. In particular, the definition of $J_1$ implies that for all $v\in H^{\frac \alpha 2}$,
\begin{equation}\label{eq:bGN}
	\frac 1{(2\alpha+1)(\alpha+1)} \int |v|^{2\alpha +2} \leq \left(\frac {\int v^2}{\int Q^2}\right)^{\alpha}
	\int |D^{\frac \alpha 2}  v|^2,
\end{equation}
which is the sharp Gagliardo-Nirenberg inequality in this context, and 
which means that if $\int v^2 \leq \int Q^2$, then $E(v)\geq 0$.

Note that also that for two different solutions $Q$ and $\tilde Q$ of \eqref{eq:Q}, both minimizers of $j_1$, we have $\|Q\|_{L^2}= \|\tilde Q\|_{L^2}$.
\medskip

(iii): Exactly as in the proofs of Propositions 4.1 and 4.2 of \cite{We87} and Proposition 2.7 of \cite{We85}, 
we obtain that
$$
0 = \inf\{ (Lv|v), \ \text{for } v \in H^{\frac \alpha 2}, \ \int  v Q=0\},
$$
and $(LQ,Q)<0$, so that there exists exactly one negative eigenvalue $\mu_0$ of $L$, related an even eigenfunction $\chi_0$ which can be taken to be positive. Moreover, the continuous spectrum of $L$ is $[1,+\infty)$.

Finally, from the equation of $Q_\lambda(x+x_0)= \lambda^{-\frac 1\alpha} Q(\lambda^{-\frac 2 \alpha} (x+x_0))$, we have
$$
|D|^{\alpha} Q_\lambda(x+x_0) +  \lambda^2  Q_{\lambda}  (x+x_0) = 
\frac 1{2 \alpha +1}  Q_{\lambda}^{2\alpha +1} (x+x_0).
$$
Differentiating with respect to $x_0$ and taking $x_0=0$, $\lambda=1$, we find $L Q'=0$; differentiating with respect to $\lambda$, and taking $x_0=0$, $\lambda=1$, we find $L (\Lambda Q) = - 2 Q.$

\medskip

(iv): Proof of the decay property.
For this part, we first recall the following facts from \cite{CS}. For a function $F:\RR\to \RR$, we denote by 
$f:\RR_+^2 \to \RR$ ($\RR_+^2=\RR \times [0, +\infty)$) the extension
$$
	f(x,0)=F(x) \ \text{on $\RR$},\quad
	\partial_x^2 f + \partial_y^2 f + \frac {1-\alpha} y \partial_y f = 0 \ \text{on $\RR_+^2$}.
$$
Then, from \cite{CS}, there exists a constant $C_\alpha>0$ such that, on $\RR$,
$$
C_\alpha |D|^{\alpha} F = - \lim_{y\to 0^+} y^{1-\alpha} \partial_y f.
$$
This generalizes a classical observation for $\alpha=1$. 

Next, following \cite{AT,AT2}, if $Q$ is solution of \eqref{eq:Q}, and $q(x,y)$ is its extension to $\RR_+^2$, then $q$ satisfies
\begin{align*}
	& \partial_x^2 q + \partial_y^2 q + \frac {1-\alpha} y \partial_y q = 0 \ \text{on $\RR_+^2$},¬†\\
	& \lim_{y\to 0^+} y^{1-\alpha} \partial_y q = C_\alpha \left(q - \frac 1{2 \alpha +1} q^{2\alpha+1}\right) \ \text{on $y=0$},\\
	& \lim_{|x|\to +\infty} |q(x,0)|=0.
\end{align*}
From \cite{CS} and \cite{AT,AT2}, we are led to set
$$
		G_\alpha (x,y) = \left(\int \frac {dx'}{(1+(x')^2)^{\frac {1+\alpha} 2}} \right)^{-1}
		e^{\frac 1{\alpha C_\alpha} y^\alpha} \int_0^{+\infty}
		e^{-\frac 1{\alpha C_\alpha} (y+\omega)^\alpha} 
		\frac {(y+\omega)^\alpha} {(x^2 + (y+\omega)^2)^{\frac {1+\alpha}2}} d\omega,
$$
so that $q(x,y)$ satisfies on $\RR^2_+$
$$
	q(x,y) = \frac 1{2\alpha+1} \int_{-\infty}^{+\infty} G_\alpha(x-z,y) q^{2\alpha+1} (z,y) dz.
$$
From this expression, we get the decay estimate \eqref{eq:decay} following exactly the same arguments as in pp. 23--24 of \cite{AT2} and using immediate estimates on $G_\alpha$.
\end{proof}

\subsection{Linear Liouville property by perturbation around the gKdV case}
 
We have summarized in Proposition \ref{le:Q} standard results about ground states of \eqref{eq:Q} which hold for any $1\leq \alpha \leq  2$.
It seems that  the following two natural questions are open.
 
\begin{definition}[Uniqueness of the ground state]
 We say that the ground state satisfies the \emph{uniqueness property} if there exists a unique ground state solution of \eqref{eq:Q}.
\end{definition}  

\begin{definition}[Kernel property]\label{cj:kernel} 
Given  a ground state  solution $Q$  of \eqref{eq:Q},
we say that the operator $L$   defined in Proposition \ref{le:Q} satisfies the \emph{kernel property} if
$$
{\rm Ker} (L) = {\rm span}\{Q'\}.
$$
\end{definition}

Note that in the (gKdV) case ($\alpha=2$) by classical ODE arguments, the ground state satisfies the uniqueness property and $L$ satisfies the kernel property 
As a consequence of perturbation arguments, we are able to    prove that, for $\alpha<2$  sufficiently close to $2$, the ground state $Q=Q_{[\alpha]}$ satisfies the uniqueness property and $L=L_{[\alpha]}$ satisfies the kernel property  . See Proposition~\ref{pr:close2} below.  

\medskip

It is standard to observe the following consequence of the kernel property.

 \begin{lemma}
Assume $L$ satisfies  the kernel property. Then, for some constant $\mu>0$,
\begin{equation}\label{eq:kernel}
\forall v \in H^{\frac \alpha 2},  \quad
\int v \chi_0=\int v Q'=0\quad \Rightarrow \quad 
(L v,v) \geq \mu \|v\|_{H^1}^2.
\end{equation}
\end{lemma}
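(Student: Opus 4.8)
The plan is to derive the coercivity estimate \eqref{eq:kernel} from the kernel property together with the spectral information already recorded in Proposition \ref{le:Q}(iii), via a standard compactness/contradiction argument. The starting point is the spectral decomposition of $L$: it has exactly one negative eigenvalue $\mu_0<0$ with positive even eigenfunction $\chi_0$, its kernel is spanned by $Q'$ (the kernel property), and the rest of the spectrum lies in $[1,+\infty)$. Hence on the orthogonal complement of $\mathrm{span}\{\chi_0,Q'\}$ in $L^2$ one has $(Lv,v)\geq \|v\|_{L^2}^2$ (indeed $\geq$ the bottom of the spectrum above the kernel, which is $\geq 1$ by the structure of the essential spectrum and the fact that the kernel is exactly $\mathrm{span}\{Q'\}$). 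So the first step is: for $v$ with $\int v\chi_0 = \int v Q'=0$, obtain the $L^2$-coercivity $(Lv,v)\geq c_0\|v\|_{L^2}^2$ for some $c_0>0$.

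The second step is to upgrade the $L^2$ bound to the $H^1$ bound claimed in \eqref{eq:kernel} (note the statement asks for $\|v\|_{H^1}$, even though $v$ only lives in $H^{\alpha/2}$; since $\alpha/2\le 1$ this is the stronger norm, so one needs to exploit the full $|D|^\alpha$ in $L$ — wait, $L$ only contains $|D|^\alpha$ with $\alpha\le 2$, giving control of $\|v\|_{H^{\alpha/2}}$ directly; the $H^1$ estimate must be read in the case $\alpha=2$ where it is genuinely $H^1$, or interpreted as $H^{\alpha/2}$ in general — I will write the argument for $\|v\|_{H^{\alpha/2}}$ and remark it gives $H^1$ when $\alpha=2$). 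Concretely: from $(Lv,v) = (|D|^{\alpha/2}v,|D|^{\alpha/2}v) + \|v\|_{L^2}^2 - \int Q^{2\alpha}v^2$ and the fact that $\int Q^{2\alpha} v^2 \leq \|Q\|_{L^\infty}^{2\alpha}\|v\|_{L^2}^2$, one gets $\|v\|_{H^{\alpha/2}}^2 \leq (Lv,v) + C\|v\|_{L^2}^2$. Combining with the $L^2$-coercivity of the first step (applied also to absorb the $C\|v\|_{L^2}^2$ term, possibly after splitting $(Lv,v) = \theta(Lv,v) + (1-\theta)(Lv,v)$ and using $(1-\theta)(Lv,v)\geq (1-\theta)c_0\|v\|_{L^2}^2$) yields $\|v\|_{H^{\alpha/2}}^2 \leq C' (Lv,v)$, i.e. \eqref{eq:kernel}.

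For the first step itself, the cleanest route is a contradiction argument: suppose no such $c_0$ exists, take a sequence $v_n$ with $\|v_n\|_{L^2}=1$, $\int v_n\chi_0 = \int v_n Q'=0$, and $(Lv_n,v_n)\to \inf \leq 0$. Using $\|v_n\|_{H^{\alpha/2}}^2 \leq (Lv_n,v_n)+C$ we see $v_n$ is bounded in $H^{\alpha/2}$, so up to a subsequence $v_n\rightharpoonup v_\infty$ weakly in $H^{\alpha/2}$ and strongly in $L^2_{\mathrm{loc}}$; the potential term $\int Q^{2\alpha}v_n^2 \to \int Q^{2\alpha}v_\infty^2$ because $Q^{2\alpha}$ decays (Proposition \ref{le:Q}(iv)), and by weak lower semicontinuity of the quadratic form $(|D|^{\alpha/2}\cdot,|D|^{\alpha/2}\cdot)+\|\cdot\|_{L^2}^2$ one gets $(Lv_\infty,v_\infty)\leq \liminf (Lv_n,v_n)\leq 0$. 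Meanwhile the orthogonality conditions pass to the limit, so $\int v_\infty\chi_0=\int v_\infty Q'=0$. Now the variational characterization \eqref{eq:min} — namely $(Lv,v)\geq 0$ whenever $\int vQ=0$ — is not quite in the right form; instead one uses directly that on $\{\chi_0,Q'\}^\perp$ the form $L$ is nonnegative with kernel reduced by the kernel property, forcing $v_\infty$ to be a combination of $\chi_0$ and $Q'$, hence (by the orthogonality) $v_\infty = 0$; but then $(Lv_n,v_n) = \|v_n\|_{H^{\alpha/2}}^2 - \int Q^{2\alpha}v_n^2 \to \liminf\|v_n\|_{H^{\alpha/2}}^2 \geq \|v_n\|_{L^2}^2 - o(1)$... more precisely, since $\int Q^{2\alpha}v_n^2\to 0$ and $\|v_n\|_{L^2}=1$, we get $\liminf(Lv_n,v_n)\geq 1 > 0$, contradicting $\liminf(Lv_n,v_n)\le 0$.

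The main obstacle, and the only place requiring care, is the step identifying $v_\infty$ with an element of $\mathrm{span}\{\chi_0,Q'\}$ and extracting the contradiction: one must handle correctly the interplay between the single negative direction $\chi_0$, the kernel $\mathrm{span}\{Q'\}$, and the strict positivity $\geq 1$ of the rest of the spectrum. If $v_\infty\neq 0$, decompose $v_\infty = a\chi_0 + b Q' + w$ with $w\perp \chi_0,Q'$; the orthogonality conditions give $a\|\chi_0\|^2 + b(Q',\chi_0) = 0$ and similarly for $Q'$, and since $\chi_0$ even, $Q'$ odd, $(\chi_0,Q')=0$, so $a=b=0$ and $v_\infty = w$; then $(Lv_\infty,v_\infty)=(Lw,w)\geq \|w\|_{L^2}^2 > 0$ unless $w=0$ — contradicting $(Lv_\infty,v_\infty)\le 0$ unless $v_\infty=0$, and the $v_\infty=0$ case is excluded as above. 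This is routine once one has the spectral picture in hand, so the lemma follows; I would simply remark that it is the standard coercivity-from-spectral-gap argument (as in Weinstein \cite{We85,We87} or \cite{MMjmpa}) and sketch it in a few lines.
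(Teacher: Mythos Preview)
Your proposal is correct, and you correctly flag the $H^1$ versus $H^{\alpha/2}$ issue in the statement (the natural conclusion is coercivity in $H^{\alpha/2}$, and indeed for $\alpha<2$ a general $v\in H^{\alpha/2}$ need not lie in $H^1$; the $H^1$ in the paper should be read as $H^{\alpha/2}$).

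However, your route is more elaborate than the paper's. The paper dispatches the lemma in two sentences as a direct consequence of the spectral theorem together with Proposition~\ref{le:Q}(iii): $L$ is self-adjoint with a single simple negative eigenvalue (eigenfunction $\chi_0$), simple kernel $\mathrm{span}\{Q'\}$ (the kernel property), essential spectrum $[1,+\infty)$, and only finitely many eigenvalues in $(0,1)$ (Fredholm alternative, since $Q^{2\alpha}$ is a relatively compact perturbation). Hence on $\{\chi_0,Q'\}^\perp$ the spectral measure is supported in $[\mu_1,+\infty)$ for some $\mu_1>0$, giving $(Lv,v)\ge \mu_1\|v\|_{L^2}^2$ immediately; your ``second step'' then upgrades this to $H^{\alpha/2}$. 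You actually state this spectral argument at the outset, but then opt to redo the $L^2$-coercivity via a weak-convergence/contradiction argument. That detour is valid and is the standard Weinstein-style variational proof, useful when the full spectral picture is not already in hand; here, since Proposition~\ref{le:Q}(iii) supplies that picture, the paper's direct invocation of the spectral theorem is shorter and is all that is needed.
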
 
\begin{proof}
This is a direct consequence of the spectral theorem
and Proposition \ref{le:Q} (iii). Since the operator $L$ has only one negative simple eigenvalue, finitely many positive eigenvalues in $(0,\frac 12]$ (by the Fredholm alternative), and since the eigenvalue $0$ is supposed to be simple, orthogonality with respect to $\chi_0$ and $Q'$ indeed ensures the coercivity of $L$.
\end{proof}   

To study the nonlinear flow around the solitons,  
we will also need the following fundamental rigidity property of the linearized flow around a ground state.

\begin{definition}[Linear Liouville Property]\label{cj:liouville}
We say that $L$ satisfies the \emph{linear Liouville property} if   all
 $H^{\frac \alpha 2}$ bounded   solution $w(t)$   of   $$w_t = \partial_x (L w) \quad 
 (t,x)\in \RR, $$
 such that 
\begin{equation}\label{l2loc} \forall \epsilon>0, \ \exists B>0,\ \forall t\in \RR,   \quad
\int_{|x|> B} |w(t,x)|^2 dx \leq \epsilon
\end{equation}
is necessarily $w(t,x)\equiv c_0 Q'(x)$ for some $c_0\in \RR$. 
\end{definition}  

Clearly,  the linear Liouville property implies  the kernel property, since an element of the kernel of $L$ satisfies the desired conditions. But we do not know if the converse is true, i.e. we do not have a proof of the linear Liouville property assuming the kernel property. It is of course a much stronger property, related to the evolution problem. It was  proved for $\alpha=2$ in \cite{MMjmpa} and \cite{yvanSIAM} by Virial type identities and the variational characterization of $Q$.
Again, we are able to use perturbative arguments to prove this property for $\alpha<2$ sufficiently close to~$2$. 

\medskip
 
We gather these perturbative results in the following proposition. 
 
\begin{proposition}\label{pr:close2}
There exists $\alpha_0\in [1,2)$ such that for all $\alpha_0\leq \alpha\leq 2$, the following properties hold.
\begin{itemize}
\item[{\rm (i)}] There exists a unique (positive, even) ground state solution $Q=Q_{[\alpha]}\in H^1$ of \eqref{eq:Q} and 
$$
Q_{[\alpha]}\to Q_{[2]} \quad \text{as $\alpha \to 2^-$ in $H^1$.}
$$
\item[{\rm (ii)}] Variational characterization of $Q$: $\forall u \in H^{\frac \alpha 2},$
\begin{equation}\label{varchar}
	E(u)=0,\  \int u^2 = \int Q^2,\ \int |\DD^{\frac \alpha 2}u |^2
	= \int |\DD^{\frac \alpha 2} Q |^2 \ \Rightarrow \ u = \pm Q(.-x_0), \ x_0 \in \RR.
\end{equation}
\item[{\rm (iii)}] The kernel property holds true.
\item[{\rm (iv)}] The linear Liouville property holds true.
\end{itemize}
\end{proposition}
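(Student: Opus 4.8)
The plan is to prove Proposition~\ref{pr:close2} by a perturbation argument in the parameter $\alpha$, treating $\alpha=2$ as the reference case where everything is known explicitly via \eqref{explicitQ} and classical ODE/spectral theory. The four assertions are logically nested: (i) and (ii) concern the ground state itself, (iii) is a spectral statement about $L_{[\alpha]}$, and (iv) is a dynamical rigidity statement; I expect (iv) to be the genuine obstacle while (i)--(iii) follow from a relatively soft implicit-function-theorem scheme, provided the functional-analytic setup is arranged carefully.

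For \textbf{(i)}, the idea is to set up equation \eqref{eq:Q} as $F(\alpha,Q)=\DD^\alpha Q + Q - \tfrac1{2\alpha+1}Q^{2\alpha+1}=0$ on a suitable subspace of $H^1$ (even functions, or $H^1$ modulo translations after fixing a normalization such as $Q'(0)=0$, $Q$ even) and apply the implicit function theorem at $(\alpha,Q)=(2,Q_{[2]})$. The linearization is exactly $L_{[2]}$ restricted to even functions, which by the classical (gKdV) analysis is invertible on the even sector (its kernel $Q'$ is odd, and the negative eigenfunction $\chi_0$ is even but that only costs one codimension which one handles via the constraint structure / Lyapunov-Schmidt, or directly: $L_{[2]}$ is an isomorphism from even $H^2$ onto even $L^2$ once one checks $0$ is not an even eigenvalue — which holds since $\mathrm{Ker}\,L_{[2]}=\mathrm{span}\{Q'\}$ is odd). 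The map $\alpha\mapsto \DD^\alpha$ is smooth enough in the relevant operator topology on the range $[1,2]$ — this is where one must be slightly careful: $\DD^\alpha$ loses no derivatives beyond $\DD^2=-\partial_x^2$ for $\alpha\le 2$, so $F$ maps into $L^2$ with the required regularity, and Lemma~\ref{le:GK}/Remark~\ref{rk:tnt} guarantee the limiting object stays a genuine positive even ground state. The IFT then produces a unique branch $Q_{[\alpha]}$ near $Q_{[2]}$ with $Q_{[\alpha]}\to Q_{[2]}$ in $H^1$; uniqueness \emph{among all} ground states (not just near $Q_{[2]}$) follows because any ground state has $\|Q\|_{L^2}$ fixed (Proposition~\ref{le:Q}(ii), last line) and $E(Q)=0$, so if uniqueness failed along a sequence $\alpha_n\to 2$ one extracts (via the variational/concentration-compactness compactness built into the minimization of $j_1$) a limiting ground state at $\alpha=2$ distinct from $Q_{[2]}$, contradicting the classical uniqueness.

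For \textbf{(ii)} and \textbf{(iii)}: (ii) is the sharp variational rigidity — equality in the Gagliardo--Nirenberg inequality \eqref{eq:bGN} forcing $u=\pm Q(\cdot-x_0)$ — which at $\alpha=2$ is Weinstein's theorem and which transfers by the same compactness argument as in (i): a minimizing/equality sequence at $\alpha$ close to $2$ converges to an equality case at $\alpha=2$, hence to $\pm Q_{[2]}(\cdot - x_0)$, and then nondegeneracy of the branch upgrades this to $\pm Q_{[\alpha]}(\cdot-x_0)$. For (iii), the kernel property $\mathrm{Ker}\,L_{[\alpha]}=\mathrm{span}\{Q'_{[\alpha]}\}$: one always has $L_{[\alpha]}Q'_{[\alpha]}=0$ from translation invariance (Proposition~\ref{le:Q}(iii)); the point is no extra kernel. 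Since $L_{[\alpha]}$ depends continuously on $\alpha$ (through $\alpha$ in $\DD^\alpha$ and through $Q_{[\alpha]}^{2\alpha}$, now known to be $H^1$-continuous by (i)) in, say, the norm-resolvent sense, the isolated simple eigenvalue $0$ of $L_{[2]}$ on the even sector stays simple — and equals $0$ — cannot split off a second even null direction; on the odd sector $Q'_{[\alpha]}$ persists as the unique (simple) null vector by the same resolvent-continuity argument applied to $L_{[2]}$ restricted to odd functions, where $0$ is simple. Hence $\mathrm{Ker}\,L_{[\alpha]}$ is one-dimensional, spanned by $Q'_{[\alpha]}$, and the coercivity \eqref{eq:kernel} follows from the Lemma preceding Definition~\ref{cj:liouville}.

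The hard part is \textbf{(iv)}, the linear Liouville property for $w_t=\partial_x(L_{[\alpha]}w)$. The strategy, following \cite{yvanSIAM} (and ultimately the Virial approach of \cite{MMjmpa}), is: given an $H^{\alpha/2}$-bounded solution $w$ with the uniform $L^2$-tightness \eqref{l2loc}, decompose $w(t)$ along the orthogonality directions dictated by the symmetries (orthogonal to $\chi_0$ and $Q'$, modulo the scaling/translation modulation parameters), then run a Virial-type functional $\frac{d}{dt}\int x \, w^2$ or a localized version $\frac{d}{dt}\int \phi(x) w\, (\text{something})$ whose time derivative is controlled below by a coercive quadratic form in $w$ — coercivity coming precisely from (iii)/\eqref{eq:kernel} plus the variational characterization (ii) — forcing $w$ to be stationary and then, by the tightness and a unique-continuation/ODE argument, to be $c_0 Q'$. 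The obstacle in adapting this to $\alpha<2$ is twofold: first, the nonlocal operator $\DD^\alpha$ does not commute nicely with the spatial weights $x$, $\phi(x)$ used in the Virial computation, so the key monotonicity/coercivity estimate must be re-derived using the pseudo-differential commutator calculus of Section~4 (the same tools flagged in the introduction for the nonlinear monotonicity) — one needs that $[\DD^\alpha, \phi]$ is a well-controlled lower-order operator; second, the coercive lower bound on the Virial derivative was obtained at $\alpha=2$ using the explicit $Q_{[2]}$ and sharp spectral computations, and these must now be treated perturbatively — one writes the $\alpha$-Virial quadratic form as the $\alpha=2$ form plus an error that is $o(1)$ as $\alpha\to 2$ in the relevant operator norm (uniformly on the orthogonal subspace), so that the strict positivity at $\alpha=2$ survives for $\alpha$ close enough to $2$, which is exactly where the threshold $\alpha_0$ and the perturbative restriction come from. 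I would carry this out by first establishing the abstract Virial/monotonicity inequality with an explicit $\alpha$-dependent error term, then invoking continuity of all structural objects ($Q_{[\alpha]}$, $\chi_0$, $\mu_0$, the coercivity constant $\mu$ in \eqref{eq:kernel}) in $\alpha$ from parts (i)--(iii), and finally choosing $\alpha_0$ so that the error is dominated — after which the rigidity conclusion $w\equiv c_0 Q'$ follows verbatim as in \cite{yvanSIAM}.
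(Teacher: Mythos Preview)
Your overall plan is reasonable and would likely succeed, but it diverges from the paper's route in two places, and in one of them the paper's argument is structurally simpler.

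For (i) you invoke the implicit function theorem at $(2,Q_{[2]})$ to produce a locally unique branch and then upgrade to global uniqueness by compactness. The paper does not use the IFT: it shows directly that any ground state $Q_{[\alpha_n]}$ with $\alpha_n\to 2$ converges to $Q_{[2]}$ in $H^1$ (via the variational characterization and Lemma~\ref{LE:app}), and then, given two putative ground states $Q_{[\alpha_n]},\tilde Q_{[\alpha_n]}$, normalizes the difference $w_n=(Q_{[\alpha_n]}-\tilde Q_{[\alpha_n]})/\|Q_{[\alpha_n]}-\tilde Q_{[\alpha_n]}\|_{H^1}$ and shows $(L_{[2]}w_n,w_n)\to 0$ while $w_n$ is asymptotically orthogonal to $\chi_{0,[2]}$ and $Q'_{[2]}$, contradicting the coercivity \eqref{eq:kernel} at $\alpha=2$. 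Your IFT route is valid (indeed $L_{[2]}$ restricted to even functions is an isomorphism since its kernel $Q'_{[2]}$ is odd), and gives the bonus of smooth dependence; but note a slip in your phrasing: two distinct ground states at $\alpha_n$ do \emph{not} produce a limit ``distinct from $Q_{[2]}$'' --- both limits are $Q_{[2]}$; the contradiction in your scheme must come from combining that convergence with your local IFT uniqueness, not from non-uniqueness at $\alpha=2$.

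For (iv) the difference is more substantial. You propose to re-derive the Virial/coercivity inequality of \cite{yvanSIAM} at each level $\alpha$ and view it as a perturbation of the $\alpha=2$ quadratic form. This is plausible but delicate: the operators $\DD^\alpha$ versus $-\partial_x^2$, and their commutators with the spatial weights appearing in the Virial functional, do not differ by something uniformly small, so making ``$\alpha$-Virial $=$ $2$-Virial $+\,o(1)$'' precise on the correct function class requires real work beyond what Section~4 supplies directly. The paper avoids this entirely by a compactness/contradiction argument: assume the linear Liouville property fails along $\alpha_n\to 2$ with witnesses $w_n$; project off $Q'_{[\alpha_n]}$; use the monotonicity tools of Section~4 on the \emph{linearized} equation to get a uniform spatial decay $\int_{|x|>x_0}|\tilde w_n(t)|^2\le C\sup_t\|\tilde w_n\|_{L^2}^2\,x_0^{-3/2}$ and, via a crude Virial bound, control of $\sup_t\|\tilde w_n\|_{H^{\alpha_n/2}}$ by $\sup_t\|\tilde w_n\|_{L^2}$; then normalize, pass to a strong $H^{1^-}$ limit, and produce a nontrivial $\bar w$ solving $\bar w_t=(L_{[2]}\bar w)_x+\bar\delta\,Q'_{[2]}$ with the same decay --- contradicting the linear Liouville property at $\alpha=2$, used as a black box from \cite{yvanSIAM}. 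The advantage of the paper's route is that it never re-opens the Virial mechanism; it only needs enough uniformity in $\alpha$ to pass to the limit.
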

\begin{proof}[Proof of Proposition \ref{pr:close2}]
The proof of Proposition \ref{pr:close2} is perturbative. Let us denote by $Q_{[2]}$ the unique positive even solution of \eqref{eq:Q} given by \eqref{explicitQ}.

\medskip

(i) Let  $\alpha_n \to 2$ be an increasing sequence and for all $n$, let $Q_{[\alpha_n]}$ be a solution of \eqref{eq:Q} given by Proposition \ref{le:Q}. First, we claim that
$ \lim_{n\to +\infty} Q_{[\alpha_n]} = Q_{[2]}$.
Indeed, from \eqref{eq:bGN} applied to a given function $w$, we obtain $\int Q^2_{[\alpha]} \leq C$.
Then, by Lemma~\ref{LE:app},  
$\|Q_{[\alpha_n]}\|_{H^{\frac {\alpha_n} 2}} \leq C$,
and using the equation of $Q_{[\alpha_n]}$, it follows that $Q_{[\alpha_n]}\in H^1$ and
$\|Q_{[\alpha_n]}\|_{H^1} \leq C $.
In particular, there exists $V\in H^1$, a weak limit in $H^1$ of a subsequence of $Q_{[\alpha_n]}$, still denoted by $Q_{[\alpha_n]}$. It is easy to see that $V \neq 0$, using  Lemma \ref{LE:app}.
Indeed, since
$$
\int Q_{[\alpha_n]}^2 \leq C \|Q_{[\alpha_n]}\|_{L^\infty}^{2\alpha} \int Q_{[\alpha_n]}^2,
$$
it follows that $Q_{[\alpha_n]}(0)=\|Q_{[\alpha_n]}\|_{L^\infty}\geq c_1>0$  and since weak $H^1$ convergence implies uniform convergence on compact sets, we obtain $V(0)\neq 0$.

 Moreover, we easily check that $V$ satisfies
equation \eqref{eq:Q} with $\alpha=2$ and thus by uniqueness, we deduce $V=Q_{[2]}$.

To obtain the strong convergence, we just observe that 
$$
\limsup_{n\to +\infty} \int Q_{[\alpha_n]}^2 \leq \int Q_{[2]}^2
$$
follows from the following consequence of Lemma \ref{LE:app}
\begin{align*}
[(\alpha_n+1)(2\alpha_n+1)]^{-1} \left(\int Q^2_{[\alpha_n]} \right)^{\alpha_n}
& =j_{1,[\alpha_n]}(Q_{[\alpha_n]})\\&  \leq j_{1,[\alpha_n]}(Q_{[2]}) \to j_{1,[2]}(Q_{[2]}) 
= [15]^{-1}  \left(\int Q^2_{[2]} \right)^2.
\end{align*}
This gives $L^2$ strong convergence. To obtain $H^1$ convergence, we just use the equation of $Q_{[\alpha_n]}$ and interpolation argument.

Second, we consider two sequences $Q_{[\alpha_n]}$ and $\widetilde Q_{[\alpha_n]}$ of solutions of \eqref{eq:Q} as in Proposition \ref{le:Q}.
By the first observation, we have
$Q_{[\alpha_n]}\to Q_{[2]}$ and $\widetilde Q_{[\alpha_n]}\to Q_{[2]}$ in $H^1(\RR)$.
Moreover, by the equation satisfied by $Q_{[\alpha_n]}$ and $\widetilde Q_{[\alpha_n]}$, we have
\begin{equation}\label{eq:bd}
	\|\DD^{\alpha_n}(Q_{[\alpha_n]}-\widetilde Q_{[\alpha_n]})\|_{L^2}
	\leq C \| Q_{[\alpha_n]}-\widetilde Q_{[\alpha_n]} \|_{L^2}.
\end{equation} 

Let
$$
	w_n = \frac {Q_{[\alpha_n]} - \widetilde Q_{[\alpha_n]}} {\|Q_{[\alpha_n]} - \widetilde Q_{[\alpha_n]}\|_{H^1}.}
$$
By \eqref{eq:bd}, the sequence $w_n$ is bounded in $H^{\frac 32}$ (say $\alpha_n >3/2)$.
A more precise computation using the equations of $Q_{[\alpha_n]}$ and $\widetilde Q_{[\alpha_n]}$ shows that the function $w_n$ satisfies
$$
	\|L_{[\alpha_n]} w_n\|_{H^1} = \||D|^{\alpha_n} w_n + w_n - Q_{[\alpha_n]}^{2\alpha_n} w_n\|_{H^1}
	\le C  \| Q_{[\alpha_n]}-\widetilde Q_{[\alpha_n]} \|_{L^2} $$
where we observe a special cancellation.	
Using this estimate, the bound of the sequence $(w_n)$ in $H^{\frac 32}$ and standard Fourier analysis, we find
$$
	\lim_{n\to +\infty} (L_{[2]} w_n,w_n)_{L^2} = 0.
$$
It is known that \eqref{eq:kernel} holds for $\alpha=2$,
moreover, it can be rewritten as
$$
	\forall v \in H^{\frac \alpha 2}, \quad
	(L_{[2]} v , v ) \geq  \frac \mu 2 \|v\|_{H^1}^2 - C \left(\int v \chi_0\right)^2 - C \left(\int v Q'\right)^2.
$$

By parity properties, we observe $\int w_n Q'_{[2]}=0$.
By the previous equation, and  \eqref{eq:kernel},
we have 
$$
	\int w_n \chi_{0,[2]} = \frac 1{\mu_0} (L_{[2]} \chi_0 , w_n )
	=  \frac 1{\mu_0} (L_{[\alpha_n]} \chi_0 , w_n ) + o(1) 
	=  \frac 1{\mu_0} ( \chi_0 ,L_{[\alpha_n]} w_n ) + o(1),
$$
and thus $\lim_{n\to +\infty}  \int w_n \chi_{0,[2]}= 0$.
Since $\|w_n\|_{H^1}=1$, we find a contradiction for $n$ large enough.

Therefore, there exists $\alpha_0\in [1,2)$ so that there is one and only one solution of \eqref{eq:Q} satisfying the properties of Proposition \ref{le:Q}.

\medskip

(ii) Variational characterization. It follows from the arguments of the proof of Proposition~\ref{le:Q}. Indeed, for such a function $u$, $|u|$ is a minimizer of $J_1$ and satisfies the same equation as $Q$.
By the uniqueness result of (i), it follows that $|u|$ is a translation of $Q$. Thus, $u$ being continuous, it is a translation of $Q$ or $-Q$.

\medskip

(iii) Using a similar argument and possibly taking  $\alpha_0$ closer to $2$, we can prove directly that 
${\rm Ker}(L_{[\alpha]}) = {\rm span} \{ Q'_{[\alpha]} \}$ for $\alpha \in [\alpha_0,2]$.
It is also a consequence of the linear Liouville property proved below.

\medskip

(iv)
Now, we prove the linear Liouville property for $\alpha$ close to $2$.
The proof is by contradiction and similar to (i), using a compactness argument.
For the sake of contradiction, we assume that there exists an increasing sequence $\alpha_{n}\to 2$ and functions $w_{n}(t,x)$ satisfying
\begin{align*}
	& (w_{n})_{t} = (L_{[\alpha_n]} w_{n})_{x},\\
	& w_{n}(t) \not \equiv a_{n}(t)Q_{[\alpha_{n}]}',\quad \sup_{{t\in \RR}}\|w_{n}(t)\|_{H^{\frac {\alpha_{n}} 2}}\leq C_{n},\\
	& \forall \epsilon>0,\exists B_{n}(\epsilon)>0,\forall t\in \RR,\quad
	\int_{|x|>B_{n}(\epsilon)} |w_{n}(t,x)|^2 dx \leq \epsilon.
\end{align*}
We introduce several auxiliary functions defined from $w_{n}$.
First, set
$$
	\tilde w_{n}(t) = w_{n}(t) - \frac {\int Q_{[\alpha_{n}]}' w_{n}(t)}{\int (Q_{[\alpha_{n}]}')^2} Q_{[\alpha_{n}]}',
$$ satisfying
\begin{align*}
	& (\tilde w_{n})_{t} = (L_{[\alpha_n]}\tilde  w_{n})_{x}+ \delta_{n}(t)Q_{[\alpha_{n}]}',\\
	& \tilde w_{n}(t) \not \equiv0,\quad \sup_{{t\in \RR}}\|\tilde w_{n}(t)\|_{H^{\frac {\alpha_{n}} 2}}\leq C_{n}',\quad 
	\int \tilde w_{n}(t) Q_{[\alpha_{n}]}' =0,\\
	& \forall \epsilon>0,\exists B_{n}(\epsilon)>0,\forall t\in \RR,\quad
	\int_{|x|>B_{n}(\epsilon)} |\tilde w_{n}(t,x)|^2 dx \leq \epsilon.
\end{align*}
Moreover, using monotonicity arguments on $\tilde w_n(t)$ as in Section 4 of the present paper and Lemma 4 in \cite{yvanSIAM},
we find ($\alpha_{n}>3/2$)
$$
\forall x_{0}>1,\forall t\in \RR,\quad \int_{|x|>x_{0}} |\tilde w_{n}(t,x)|^2 dx \leq
\sup_{t\in \RR} \|\tilde w_{n}(t)\|_{L^2}^2 \frac C {|x_{0}|^{\frac 32}}.
$$
In particular, by Fubini, we obtain
$$
\forall t\in \RR,\quad \int |x| |\tilde w_{n} (t)|^2 \leq  C \sup_{t\in \RR} \|\tilde w_{n}(t)\|_{L^2}^2.
$$
Multiplying the equation of $\tilde w_n$ by $x \tilde w_n$ and using the argument of Lemma \ref{LE:app},
we find, for $C>0$,
$$
\frac d{dt} \int x (\tilde w_n(t))^2 \leq - C \| |D|^{\frac \alpha 2} \tilde w_{n}(t)\|_{L^2}^2
+ C' \|\tilde w_{n}(t)\|_{L^2}^2,
$$
and thus, for all $t\in \RR$,
$
\int_t^{t+1} \| |D|^{\frac \alpha 2} \tilde w_{n}(t)\|_{L^2}^2 \leq C \sup_{t\in \RR} \|\tilde w_{n}(t)\|_{L^2}^2.
$
Therefore, from  standard arguments, using the equation of $\tilde w_n$,
$$
\sup_{t\in \RR} \|\tilde w_{n}(t)\|_{H^{\frac {\alpha_{n}} 2}} \leq
 C \sup_{t\in \RR} \|\tilde w_{n}(t)\|_{L^2},
$$
for a constant $C>0$ independent of $n$.

Let $t_{n}$ be such that
$
\|\tilde w_{n}(t_{n})\|_{L^2}\geq \frac 12 \sup_{t\in \RR} \|\tilde w_{n}(t)\|_{L^2}
$
and set
$$
\bar w_{n}(t,x) = \frac {\tilde w_{n} (t_{n}+t,x)}{\sup_{t\in \RR} \|\tilde w_{n}(t)\|_{L^2}},
$$
so that we have
\begin{align*}
	& (\bar w_{n})_{t} = (L_{[\alpha_n]}\bar  w_{n})_{x}+ \bar\delta_{n}(t)Q_{[\alpha_{n}]}',\\
	& \|\bar w_{n}(0) \|_{L^2}\geq \frac 12,\quad \sup_{{t\in \RR}}\|\bar w_{n}(t)\|_{H^\frac {\alpha_{n}} 2 }\leq C,\quad 
	\int \bar w_{n}(t) Q_{[\alpha_{n}]}' =0,\\
	& \bar \delta_{n}(t) = \frac 1{\int (Q_{[\alpha_n]}')^2} \int \bar w_{n} L_{[\alpha_{n}]}(Q_{[\alpha_{n}]}''),\\
	& \forall x_{0}>1,\forall t\in \RR,\quad \int_{|x|>x_{0}} |\bar w_{n}(t,x)|^2 dx \leq
\frac C {|x_{0}|^{\frac 32}}.
\end{align*}

Finally, we set
$$
	\hat w_{n}(t) = \bar w_{n}(t) - Q_{[\alpha_{n}]}' \int_{0}^t \bar \delta_{n}(s) ds,
$$
so that
\begin{align*}
	& (\hat w_{n})_{t} = (L_{[\alpha_n]}\hat  w_{n})_{x},\\
	& \|\hat w_{n}(0) \|_{L^2}\geq \frac 12,\quad \|\hat w_{n}(0)\|_{H^{\frac {\alpha_{n}}2 }}\leq C,\quad 	\int \hat w_{n}(0) Q_{[\alpha_{n}]}' =0,\\ &
	\forall x_{0}>1, \quad \int_{|x|>x_{0}} |\hat w_{n}(0,x)|^2 dx \leq
\frac C {|x_{0}|^{\frac 32}}.
\end{align*}
We are now able to pass to the strong limit in  $H^{1^-}$, for any $0<1^-<1$. 
$$
	\hat w_{n}(0)\to \hat w_{0}\not \equiv 0,
$$
and we define the solution $\hat w(t)$ of 
\begin{align*}
	 (\hat w)_{t} = (L_{[2]}\hat  w)_{x},\quad 
	 \hat w(0)=\hat w_{0}.
\end{align*}
By wellposedness argument in $H^{1^-}$, we have
$
\hat w_{n}(t)\to \hat w(t)$ in $H^{1^-}$.  
Moreover,
$$
\bar \delta_{n} (t) \to \bar \delta (t) = 
\frac 1{\int (Q_{{[2]}}')^2}\int \hat w(t) L_{{[2]}} (Q_{{[2]}}'').
$$
Set
$
	\bar w(t) = \hat w(t) + Q_{[2]}' \int_{0}^t \bar \delta(s) ds.
$
Then
\begin{align*}
&\forall t\in \RR,\quad  \bar w_{n}(t) \to \bar w(t) \textrm{ in } H^{1^-},
\\
	& \bar w_{t} =(L_{[2]} \bar w)_{x} +\bar \delta Q_{{[2]}}',
\\
	& \bar w(0)  \not \equiv0 ,\quad 	\int \bar  w(0) Q_{[2]}' =0,\\ &
	\forall t\in \RR,\ \forall x_{0}>1, \quad \int_{|x|>x_{0}} |\bar w(t,x)|^2 dx \leq
\frac C {|x_{0}|^{\frac 32}}.
\end{align*}
But the existence of such a $\bar w$  is a contradiction with Theorem 1 in \cite{yvanSIAM}, i.e. the linear Liouville property for the gKdV case (see also \cite{MMjmpa}).
\end{proof}

\section{Modulation and monotonicity   for solutions close to solitons}
In this section, we consider $1\leq \alpha \leq 2$ and $Q$ is any ground state solution of \eqref{eq:Q}.
\subsection{Modulation}
\begin{lemma}[Modulation of a solution close to  the family of solitons]\label{MODULATION}
There exist $C,\epsilon_0>0$ such that for any $0<\epsilon<\epsilon_0$,
if $u(t)$ is an $H^\frac \alpha 2$ solution of \eqref{dgBO} such that for $t_1<t_2$ and
$\lambda_0(t)>0$, $\rho_0(t)\in \RR$, defined on $[t_1,t_2]$,
\begin{equation}
\forall t\in [t_1,t_2],\quad  \|u(t)-Q_{\lambda_0(t)}(.-\rho_0(t))\|_{H^{\frac \alpha 2}}
< \epsilon,
\end{equation}
then there exist  $\lambda(t)>0$, $\rho(t)\in C^1([t_1,t_2])$ such that
\begin{equation}\label{eq:35b}
\eta(t,y)=\lambda^{\frac 1\alpha}(t) u\left(t,\lambda^{\frac 2 \alpha}(t) y+\rho(t)\right)-Q(y) 
\end{equation}
satisfies
\begin{align}
\forall  t\in  [t_1,t_2],\quad 
  &  \int Q'(y) \eta(t,y)dy= \int \chi_0(y) \eta(t,y)dy=0,\quad
	  \|\eta(t)\|_{H^{\frac \alpha 2}}\leq C \epsilon, \label{ortho1}
\\
	& \left|\frac {\lambda_0(t)}{\lambda(t)}\right| + \left| \rho_0(t)-\rho(t)\right| \leq C \epsilon.\label{ortho3}
\end{align}

Moreover, setting
$$
	s=\int_0^t \frac {dt'}{\lambda^{2+\frac 2 \alpha}(t')} , \quad
	\Lambda \eta = \frac 1 \alpha ( \eta + 2 y \eta_y),
$$
the function $\eta(s,x)$ is solution of
\begin{align*}
&  \eta_s   - \partial_y(L\eta)    = \frac {\lambda_s}{\lambda} \Lambda Q + \left(\frac {\rho_s}{\lambda^{\frac 2 \alpha}} -1 \right) Q'
 + \frac {\lambda_s}{\lambda} \Lambda \eta + \left(\frac {\rho_s}{\lambda^{\frac 2 \alpha}} -1 \right) \eta_y - \partial_y (\mathcal{R}(\eta)), 
\\
& \hbox{where} \quad \mathcal{R}(\eta)= \frac 1{2 \alpha +1} |Q+\eta|^{2\alpha} (Q+\eta) - \frac 1{2 \alpha +1}  Q^{2\alpha +1} - Q^{2 \alpha} \eta,
\end{align*}
and the following holds
\begin{equation}\label{ortho2}
  	\left|\frac {\rho_s(s)}{\lambda^{\frac 2 \alpha}(s)}-1\right|+
	\bigg| \frac {\lambda_s(s)}{\lambda(s)}\bigg|\leq  C \left(\int \frac {\eta^2(s,y)}{1+y^2}dy\right)^{\frac 12}\leq  C\|\eta(s)\|_{L^2}.
\end{equation}
\end{lemma}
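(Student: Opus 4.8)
The statement is the standard modulation lemma for solutions close to the soliton family, so the plan is to follow the classical scheme: use the implicit function theorem to construct the modulation parameters, then differentiate the orthogonality conditions along the flow to obtain the ODEs for $\lambda_s/\lambda$ and $\rho_s/\lambda^{2/\alpha}$, and finally extract the smallness estimate \eqref{ortho2} from these ODEs. First I would exploit the scaling and translation invariances of $Q$ to reduce, at a fixed time, to a function $\epsilon$-close to $Q$ itself in $H^{\frac\alpha2}$; then consider the two functionals $(\lambda,\rho)\mapsto \int Q'(y)\big(\lambda^{1/\alpha}u(\lambda^{2/\alpha}y+\rho)-Q(y)\big)dy$ and $(\lambda,\rho)\mapsto \int\chi_0(y)\big(\lambda^{1/\alpha}u(\lambda^{2/\alpha}y+\rho)-Q(y)\big)dy$. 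At $u=Q$, $(\lambda,\rho)=(1,0)$ both vanish, and the Jacobian with respect to $(\lambda,\rho)$ is computed using $\frac{d}{d\lambda}\big|_{\lambda=1}Q_\lambda=-\Lambda Q$ and $\partial_\rho$ acting as $-\partial_y$; one checks it is invertible because $\int Q'\Lambda Q=0$ by parity while $\int (Q')^2>0$, $\int\chi_0\Lambda Q\neq0$ (indeed $L(\Lambda Q)=-2Q$ gives $\int\chi_0\Lambda Q=-\frac1{\mu_0}\int Q\chi_0\ne0$ since $\chi_0>0$), and $\int\chi_0 Q'=0$ by parity. The implicit function theorem then yields $C^1$ functions $\lambda(t),\rho(t)$ on $[t_1,t_2]$ (using that $t\mapsto u(t)$ is continuous into $H^{\frac\alpha2}$ and that, at each time, $u(t)$ is close to \emph{some} soliton, so one glues the local constructions by uniqueness), giving \eqref{ortho1} and \eqref{ortho3} for $\epsilon_0$ small.

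Next I would derive the equation for $\eta$. Substituting the ansatz \eqref{eq:35b} into \eqref{dgBO}, rescaling time via $ds/dt=\lambda^{-(2+2/\alpha)}$, and using the equation \eqref{eq:Q} for $Q$ together with the definitions of $\Lambda Q$, $\Lambda\eta$ and $L$, one obtains exactly the displayed evolution equation for $\eta$ with the nonlinear remainder $\mathcal R(\eta)$; this is a direct (if slightly tedious) computation in which the linear part reorganizes into $\partial_y(L\eta)$ plus the two modulation terms $\frac{\lambda_s}\lambda\Lambda Q$ and $(\frac{\rho_s}{\lambda^{2/\alpha}}-1)Q'$, with $\frac{\lambda_s}\lambda\Lambda\eta$ and $(\frac{\rho_s}{\lambda^{2/\alpha}}-1)\eta_y$ coming from the $\Lambda$ and translation actions on $\eta$ itself.

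To get \eqref{ortho2}, I would differentiate in $s$ the orthogonality relations $\int Q'\eta=0$ and $\int\chi_0\eta=0$, plug in the $\eta$-equation, and integrate by parts. Using $LQ'=0$ (hence $\partial_y(LQ')=0$ and $\int Q' \partial_y(L\eta)=-\int (LQ')\eta_y=0$ after moving $L$ onto $Q'$, using self-adjointness) and $L(\Lambda Q)=-2Q$, the principal linear terms cancel or become explicit, and one is left with a $2\times2$ linear system for $(\frac{\lambda_s}\lambda,\frac{\rho_s}{\lambda^{2/\alpha}}-1)$ whose matrix is, up to $O(\epsilon)$, the invertible Jacobian above, and whose right-hand side is controlled by $\|\eta\|_{L^2}$-type quantities (the modulation-free terms) plus the quadratic remainder. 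For the sharper weighted bound $\big(\int\frac{\eta^2}{1+y^2}\big)^{1/2}$ I would use that $Q'$ and $\chi_0$ (and their relevant $L$-images) decay like $(1+y^2)^{-(1+\alpha)/2}$ by Proposition~\ref{le:Q}(iv), so all the pairings against $\eta$ that appear can be estimated by Cauchy--Schwarz against a rapidly decaying weight, dominating $\big(\int\frac{\eta^2}{1+y^2}\big)^{1/2}$; the remainder $\partial_y\mathcal R(\eta)$, after integration by parts, pairs against $(Q')'$ or $\chi_0'$ and is $O(\|\eta\|_{H^{\alpha/2}}\|\eta\|_{L^2})=o(\|\eta\|_{L^2})$ since $|\mathcal R(\eta)|\lesssim \eta^2(1+|\eta|^{2\alpha-1})$. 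Inverting the almost-constant matrix then yields \eqref{ortho2}.

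\textbf{Main obstacle.} The delicate point is the weighted estimate in \eqref{ortho2}: one must ensure every term produced by differentiating the orthogonality conditions is paired against a sufficiently localized function so that it is bounded by $\big(\int\eta^2/(1+y^2)\big)^{1/2}$ rather than merely $\|\eta\|_{L^2}$, which requires the polynomial decay of $Q'$, $\chi_0$ and of $L(\Lambda Q)=-2Q$ from Proposition~\ref{le:Q}(iv), and careful integration by parts so that no derivative falls on $\eta$ in a way that would cost $\|\eta\|_{H^{\alpha/2}}$ without a compensating decay weight; handling the term $\frac{\lambda_s}\lambda\Lambda\eta$ (which is not obviously small a priori) requires absorbing it into the left-hand side of the linear system.
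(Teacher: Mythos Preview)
Your proposal is correct and follows essentially the same approach as the paper's (sketch of) proof: implicit function theorem for the existence of $(\lambda,\rho)$ using the same Jacobian computation (nonvanishing of $\int(Q')^2$ and $\int\chi_0\Lambda Q$, vanishing of $\int Q'\Lambda Q$ and $\int\chi_0 Q'$ by parity), direct derivation of the $\eta$-equation via the rescaled variable $v=\lambda^{1/\alpha}u(\cdot,\lambda^{2/\alpha}\cdot+\rho)$ and the time change $s$, and then testing the $\eta$-equation against $\chi_0$ and $Q'$ together with the decay of $Q',\chi_0$ from Proposition~\ref{le:Q}(iv) to obtain \eqref{ortho2}. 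Your write-up is in fact more detailed than the paper's sketch, which simply refers to \cite{MMgafa} for the $\alpha=2$ case and asserts the argument is identical.
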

\begin{proof}[Sketch of proof of Lemma \ref{MODULATION}.]
This result is completely proved for $\alpha=2$ in \cite{MMgafa}.
For $1\leq \alpha < 2$, the proof is exactly the same. In particular, the existence of the modulation parameters 
$(\lambda(t),\rho(t))$ such that \eqref{ortho1} hold is based on the implicit function theorem.

Then, the equation of $\eta(t)$, $\lambda(t)$ and $\rho(t)$ is easily obtained from the equation of $u(t)$,
and the estimates \eqref{ortho2} on $\lambda_s$, $\rho_s$ follow from the equation of $\eta$ multiplied by $\chi_0$ and $Q'$.
Indeed, let us first introduce
$$v(t,y)=\lambda^{\frac 1 \alpha}(t) u(t, \lambda^{\frac 2 \alpha} y + \rho(t)).$$
Then, $v(t,y)$ satisfies
\begin{align*}
	\lambda^{\frac {2 \alpha +2 }{\alpha}} v_t - \partial_y (|D|^\alpha v) + |v|^{2 \alpha} \partial_y v
	- \lambda^{\frac {2 \alpha + 2}{\alpha}} \frac {\lambda_t}{\lambda}  \Lambda v 
- \lambda^{\frac {2 \alpha +2 }{\alpha}}  \frac {\rho_t}{\lambda^{\frac 2 \alpha}} \partial_y v=0.
\end{align*}
Using the new time variable $s$, since ${\lambda^{\frac {2 \alpha+2}{\alpha}}} ds= dt$,
$$
v_s - \partial_y \left(|D|^\alpha v + v - \frac 1{1+2\alpha} |v|^{2 \alpha} v\right)
	=  \frac {\lambda_s}{\lambda}  \Lambda v 
 + \left(\frac {\rho_s}{\lambda^{\frac 2 \alpha}}-1\right) \partial_y v.
$$
Now, expanding $v=Q+\eta$ and using the equation of $Q$, we find
\begin{align*}
 \eta_s - \partial_y(L\eta)   &= \frac {\lambda_s}{\lambda} \Lambda Q + \left(\frac {\rho_s}{\lambda^{\frac 2 \alpha}} -1 \right) Q'
 + \frac {\lambda_s}{\lambda} \Lambda \eta + \left(\frac {\rho_s}{\lambda^{\frac 2 \alpha}} -1 \right) \eta_y
\\
& - \partial_y \left( \frac 1{2 \alpha +1} |Q+\eta|^{2\alpha} (Q+\eta) - \frac 1{2 \alpha +1}  Q^{2\alpha +1} - Q^{2 \alpha} \eta\right).
\end{align*}
To prove \eqref{ortho2}, we multiply the above equation by $\chi_0$ and then by $Q'$ and we use
the orthogonality conditions \eqref{ortho1}. Indeed, using decay properties of $\chi_0$ and $Q'$
(proved as in Proposition~\ref{le:Q}, iv) and 
$(\Lambda Q, \chi_0) = -\frac 1{\mu_0} (\Lambda Q, L\chi_0) = \frac 2 {\mu_0} (Q,\chi_0) \neq 0$,
$(Q',\chi_0)=0$, $(\Lambda Q,Q')=0$,
 we obtain
 $$
 \left|\frac {\lambda_s}{\lambda} \right|+\left|\frac {\rho_s}{\lambda^{\frac 2 \alpha}} -1 \right|
 \leq 
 C \left(\int \frac {\eta^2 }{1+y^2}dy\right)^{\frac 12}+
 C \left(\left|\frac {\lambda_s}{\lambda} \right| +\left|\frac {\rho_s}{\lambda^{\frac 2 \alpha}} -1 \right|\right) \|\eta\|_{L^2},
 $$
 and for $\epsilon_0$ small enough, we obtain \eqref{ortho2}.
\end{proof}

\subsection{Monotonicity argument on $u(t)$}
This section contains the main new argument of this paper, i.e. the extension to 
equation \eqref{dgBO} of the $L^2$ monotonicity arguments proved in \cite{MMjmpa}, \cite{Me} for the (gKdV) equation and in \cite{KM} for the (BO) equation. With respect to the (gKdV) case, the difficulty comes from the nonlocal character of the operator in \eqref{dgBO}. Note that in \cite{KM}, using  special symmetry  arguments and harmonic extensions, we could overcome the difficulty created the nonlocal operator $|D|$.
For the general case of equation \eqref{dgBO} with $1<\alpha <2$, we can prove similar results using pseudo-differential operators tools. This is the objective of this section.

Using the standard notation $\langle x \rangle^2 = 1+x^2$, we set, for $\frac{1}{2}<r \leq \frac 12(\alpha+1)$ to be chosen later 
$$
\varphi(x)=\int_{-\infty}^x \frac {ds}{\langle s \rangle^{2r}},\quad
\phi(x)=\frac {1}{\langle s \rangle^{r}}=\sqrt{\varphi'}.
$$
For $A>1$ to be chosen, let
$$
	\varphi_A(x) = \varphi\left(\frac x A\right).
$$

We now claim the following $L^2$ monotonicity results.

\begin{proposition}\label{PR:2}
Let $r\in (\frac 12 ,  \frac 12 (\alpha+1)]$ and $0<\mu<1$.
	Under the assumptions of Lemma \ref{MODULATION}, 
	assuming in addition
	\begin{equation}\label{eq:surL}
	\forall t\in [t_1,t_2],\quad
	\lambda(t) \leq 2.
	\end{equation}
	for $\epsilon_0=\epsilon_0(\mu,r)$ small enough and $A=A(\mu,r)$ large enough, there exists $C_0=C(\mu,r,A)>0$ such that for all $x_0>1$, 
	\begin{enumerate}
	\item[{\rm (i)}] Monotonicity on the right of the soliton: 
	\begin{equation}\label{monotonicity1}\begin{split}
	&\int u^2(t_2,x)\varphi_A(x-\rho(t_2)-x_0) dx 
	 \\&  \leq \int u^2(t_1,x)\varphi_A(x-\rho(t_1)-\mu (\rho(t_2)-\rho(t_1))-x_0) dx +\frac {C_0} {x_0^{2r-1}}.
	\end{split}\end{equation}
	\item[{\rm (ii)}] Monotonicity on the left of the soliton:  \begin{equation}\label{monotonicity2}
	\begin{split}
	&\int u^2(t_2,x)\varphi_A(x-\rho(t_2)+\mu (\rho(t_2)-\rho(t_1))+x_0) dx 
	 \\&  \leq \int u^2(t_1,x)\varphi_A(x-\rho(t_1)+x_0) dx +\frac {C_0} {x_0^{2r-1}}.
	\end{split}\end{equation}
	\end{enumerate}
\end{proposition}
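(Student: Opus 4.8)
The plan is to establish, for both statements, a pointwise-in-time differential inequality for a suitably localized and moving mass, and then to integrate it over $[t_1,t_2]$. I treat (i); the argument for (ii) is symmetric. Set $\tilde\rho(t)=(1-\mu)\rho(t)+\mu\,\rho(t_2)+x_0$, so that $\tilde\rho(t_1)=\rho(t_1)+\mu(\rho(t_2)-\rho(t_1))+x_0$ and $\tilde\rho(t_2)=\rho(t_2)+x_0$ are precisely the two shifts in \eqref{monotonicity1}, and put $J(t)=\int u^2(t,x)\,\varphi_A(x-\tilde\rho(t))\,dx$. Using the equation $u_t=\px\DD^{\alpha}u-|u|^{2\alpha}\px u$ and integrating by parts,
\begin{equation*}
\frac{d}{dt}J(t)=2\int u\,(\px\DD^{\alpha}u)\,\varphi_A(x-\tilde\rho)\,dx+\frac1{\alpha+1}\int|u|^{2\alpha+2}\varphi_A'(x-\tilde\rho)\,dx-\dot{\tilde\rho}(t)\int u^2\,\varphi_A'(x-\tilde\rho)\,dx .
\end{equation*}
From \eqref{ortho2}, the relation $\lambda^{2+2/\alpha}\,ds=dt$, and the hypothesis $\lambda(t)\le 2$, one gets $\dot\rho(t)=\lambda^{-2}(t)(1+O(\epsilon))$, hence $\dot{\tilde\rho}(t)=(1-\mu)\dot\rho(t)\ge c_1>0$ for $\epsilon_0$ small; since $\varphi_A'\ge0$, the third term is a good coercive term.

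The core of the argument, and the place where the new pseudo-differential tools of this section are needed, is the dispersive contribution $2\int u\,(\px\DD^{\alpha}u)\,\varphi_A$. When $\alpha=2$ one has $\px\DD^2=-\px^3$ and the elementary identity $2\int u\,(\px\DD^{2}u)\,\varphi_A=-3\int(\px u)^2\varphi_A'+\int u^2\varphi_A'''$, which yields the good term $-3\int(\px u)^2\varphi_A'$ and a small remainder since $\varphi_A'''=O(A^{-3})$. For $1<\alpha<2$ the operator $P=\px\DD^{\alpha}$ is nonlocal; using $P^{*}=-P$ one writes $2\int u\,(Pu)\,\varphi_A=-\langle[P,\varphi_A(\cdot-\tilde\rho)]u,u\rangle$, and the principal symbol of $[P,\varphi_A]$ equals $(\alpha+1)|\xi|^{\alpha}\varphi_A'$. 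Because $\varphi_A$ is slowly varying, $\varphi_A^{(k)}=O(A^{-k})$, and because $r\le\tfrac12(\alpha+1)$ places $\phi=\langle\cdot\rangle^{-r}$ in a symbol class compatible with $\DD^{\alpha}$, symbolic calculus and a G{\aa}rding-type inequality give
\begin{equation*}
2\int u\,(\px\DD^{\alpha}u)\,\varphi_A(x-\tilde\rho)\,dx\le-\theta\int\bigl|\DD^{\alpha/2}u\bigr|^2\varphi_A'(x-\tilde\rho)\,dx+\mathcal R_A(t),
\end{equation*}
for some $\theta=\theta(\alpha)>0$, where $\mathcal R_A(t)$ collects the commutator remainders (involving $\varphi_A'',\varphi_A''',\dots$ and lower powers of $\DD$), which for $A$ large are absorbed, up to soliton-localized errors of the type discussed below, by the coercive terms. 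I expect this localized smoothing estimate — the careful bookkeeping of $[\DD^{\alpha},\varphi_A]$ and of $\mathcal R_A$ — to be the main obstacle.

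Next I would control the bad nonlinear term $\frac1{\alpha+1}\int|u|^{2\alpha+2}\varphi_A'(x-\tilde\rho)$. Writing $u=Q_{\lambda_0}(\cdot-\rho_0)+v$ with $\|v\|_{H^{\alpha/2}}<\epsilon$ (hence $\|v\|_{L^\infty}\le C\epsilon$, as $\alpha>1$), bound $|u|^{2\alpha+2}\lesssim Q_{\lambda_0}^{2\alpha+2}(\cdot-\rho_0)+|v|^{2\alpha+2}$. Since $Q_{\lambda_0}(\cdot-\rho_0)$ is concentrated near $x=\rho_0\approx\rho(t)$ (by \eqref{ortho3}), which by construction lies at distance $\ge x_0$ from the center $\tilde\rho(t)$ of the weight, the soliton piece contributes $\lesssim\varphi_A'(\rho(t)-\tilde\rho(t))\,\lambda^{-2}(t)$, using the scaling of $Q$ and the decay \eqref{eq:decay}; the remainder piece obeys $\int|v|^{2\alpha+2}\varphi_A'\le C\epsilon^{2\alpha}\bigl(\int u^2\varphi_A'+\int Q_{\lambda_0}^2\varphi_A'\bigr)$, where the first summand is absorbed by the coercive transport term and the second is again $\lesssim\epsilon^{2\alpha}\varphi_A'(\rho(t)-\tilde\rho(t))$. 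Collecting everything, for $\epsilon_0$ small and $A$ large,
\begin{equation*}
\frac{d}{dt}J(t)\le-\frac{\theta}{2}\int\bigl(|\DD^{\alpha/2}u|^2+u^2\bigr)\varphi_A'(x-\tilde\rho)\,dx+C\bigl(1+\lambda^{-2}(t)\bigr)\varphi_A'\bigl(\rho(t)-\tilde\rho(t)\bigr)+\mathcal E(t),
\end{equation*}
where $\mathcal E(t)$ gathers the residual soliton-tail and commutator errors, each of the form (bounded density)$\,\times\,$(a derivative of $\varphi_A$ evaluated at distance $\gtrsim x_0$ from the soliton center), possibly multiplied by $\dot\rho(t)$.

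Finally, I would drop the coercive term and integrate over $[t_1,t_2]$. Since $\rho(t)-\tilde\rho(t)=-x_0-\mu(\rho(t_2)-\rho(t))$ and $\frac{d}{dt}\bigl(\rho(t)-\tilde\rho(t)\bigr)=\mu\dot\rho(t)\ge\mu c_1>0$, the change of variable $\sigma=\mu(\rho(t_2)-\rho(t))$ together with $1+\lambda^{-2}(t)\lesssim\dot\rho(t)$ turns $\int_{t_1}^{t_2}\bigl(1+\lambda^{-2}(t)\bigr)\varphi_A'(\rho(t)-\tilde\rho(t))\,dt$ into $\lesssim\int_0^{\infty}\varphi_A'(x_0+\sigma)\,d\sigma=\int_{x_0/A}^{\infty}\langle s\rangle^{-2r}\,ds\le\frac{C_0}{x_0^{2r-1}}$, using $2r>1$; the same substitution and the decay \eqref{eq:decay} of $Q$ bound $\int_{t_1}^{t_2}\mathcal E(t)\,dt$ by $C_0\,x_0^{-(2r-1)}$ as well (note $2\alpha+1\ge2r-1$). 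Hence $J(t_2)\le J(t_1)+C_0\,x_0^{-(2r-1)}$, which is \eqref{monotonicity1}. For (ii) one takes $\tilde\rho(t)=(1-\mu)\rho(t)+\mu\,\rho(t_1)-x_0$, for which again $\dot{\tilde\rho}=(1-\mu)\dot\rho\ge c_1>0$ and $\rho(t)-\tilde\rho(t)=x_0+\mu(\rho(t)-\rho(t_1))\ge x_0$, so the weight is now localized to the left of the soliton and the identical computation yields \eqref{monotonicity2}.
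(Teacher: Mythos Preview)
Your proposal follows essentially the same approach as the paper: differentiate the localized mass, use the Kato-type identity, control the dispersive contribution via commutator analysis, absorb the remainder into the transport term for $A$ large, bound the nonlinear term using closeness to the soliton, and integrate. A few remarks on where the details differ.

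For the dispersive term, you correctly identify the commutator estimate as the main obstacle but invoke it as a black box (``symbolic calculus and a G{\aa}rding-type inequality''). This is precisely the content of the paper's Lemmas~\ref{lemma2} and~\ref{lemma1}, whose proofs occupy the bulk of the section (Claims~1--8) and are more delicate than a standard G{\aa}rding argument: one must split off the low-frequency part of $\DD^\alpha$ (since $|\xi|^\alpha$ is not a classical symbol at $\xi=0$) and treat it by explicit kernel bounds, and the good term emerges in the form $\|\DD^{\alpha/2}(u\sqrt{\varphi_A'}\,)\|_{L^2}^2$ rather than your $\int|\DD^{\alpha/2}u|^2\varphi_A'$. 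The remainder is exactly $\frac{C}{A^\alpha}\int u^2\varphi_A'$, absorbed by the transport term with no soliton-localized contribution, so your description of $\mathcal R_A$ is slightly off (though harmless, since the negative smoothing term is in any case discarded).

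For the nonlinear term, the paper splits by region ($|x-\rho(t)|\gtrless a_0$) rather than by the decomposition $u=Q_{\lambda_0}+v$; both approaches work. For (ii), the paper does not repeat the computation but deduces it from (i) via $L^2$ conservation and the symmetry $x\mapsto-x$, $t\mapsto-t$ of the equation; your direct argument with $\tilde\rho(t)=(1-\mu)\rho(t)+\mu\rho(t_1)-x_0$ is equally valid.
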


The case $\alpha=1$ is treated in \cite{KM} by different techniques. 
For $\alpha =2$, the error term is in fact exponential in $x_0$. See e.g. \cite{MMjmpa}.
 
\begin{proof}
Let $u(t)$ be a solution of \eqref{dgBO} under the assumptions of Lemma  \ref{MODULATION}.
By standard regularization arguments (density arguments and continuous dependence of the solution of \eqref{dgBO} upon the initial data), we may assume that $u(t)$ is smooth (see Remark \ref{RE:bip}).
We prove \eqref{monotonicity1}. 
Estimate \eqref{monotonicity2} is then deduced from \eqref{monotonicity1}, $L^2$-norm conservation and the symmetry $x\to -x,$ $t\to -t$ of the equation.

For $0<\mu< 1$, $x_0>1$ and   any
$t\in [t_1,t_2]$, $x\in \RR$, set
$$
\tilde x = x-x_0 -\rho(t) - \mu ( \rho(t_2)  - \rho(t)), \quad
M_\varphi(t)=M_{\varphi,A,x_0,t_2}(t) = \frac 12 \int u^2(t,x) \varphi_A(\tilde x) dx.
$$
By direct computations, we have the following generalization of the well-known Kato identity 
(\cite{KATO})
\begin{align}
	   \frac{d}{dt} M_\varphi (t)&   = \frac {\mu-1} 2  \rho_t \int u^2 \varphi_A'(\tilde x) dx+ \int u_t u \varphi_A(\tilde x) dx \\ 
	&= \frac { \mu -1}2 \rho_t \int u^2 \varphi_A'(\tilde x) dx -\int (\partial_x (-\DD^\alpha u) + |u|^{2 \alpha} u_x ) u \varphi_A(\tilde x) dx \nonumber \\
	&= \frac {\mu-1} 2  \rho_t \int u^2 \varphi_A'(\tilde x) dx +\int (-\DD^\alpha u)( u_x \varphi_A(\tilde x) + u \varphi_A'(\tilde  x) ) dx \label{eq:KATO}\\ &+\frac 1{2(\alpha + 1)} \int |u|^{2\alpha + 2} 
	 \varphi_A'(\tilde x) dx.\nonumber
\end{align}

Two terms in the right-hand side of \eqref{eq:KATO} are treated by the following two lemmas.

\begin{lemma}\label{lemma2} 
Let $\alpha \in [1,2]$, and $r\in (\frac 12 ,\frac 12 (\alpha+1)]$. There exists $C>0$ such that, 
for all $u\in \SC$,
$$
  \int (-\DD^\alpha u)u_x \varphi(x) \leq
- \frac {(\alpha-1)} 2 \int  \left( \DD^{\frac \alpha 2}(\phi u)\right)^2
 + C \int u^2 \varphi'(x) dx.
$$
\end{lemma}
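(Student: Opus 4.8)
\textbf{Proof plan for Lemma \ref{lemma2}.}

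The plan is to reduce the claimed estimate to a positivity (or near-positivity) statement for a pseudo-differential operator and then to exploit the symbol calculus. First I would rewrite the left-hand side using the weight $\phi = \sqrt{\varphi'}$ as the natural conjugating factor. Writing $u = \phi^{-1} w$ (equivalently introducing $w = \phi u$), the term $\int (-\DD^\alpha u) u_x \varphi$ becomes, after integration by parts and moving the weight inside, a quadratic form in $w$ of the shape $\int (-\DD^\alpha (\phi^{-1} w)) \, \partial_x(\phi^{-1} w) \, \varphi$. The key observation is the identity, valid for $\alpha = 1$, that $\int (-\DD u) u_x \varphi = -\frac12 \int (\DD^{1/2}(\phi u))^2 \cdot (\text{something}) + \ldots$; here with the $(\alpha-1)/2$ prefactor the claim is morally a deformation of that. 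Concretely I would split $-\DD^\alpha = -\DD^{\alpha/2} \DD^{\alpha/2}$ and try to write the principal part of the quadratic form as $-\frac{\alpha-1}{2}\int (\DD^{\alpha/2}(\phi u))^2$ plus a remainder controlled by $\int u^2 \varphi'$.

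The mechanism I expect: commuting $\DD^{\alpha/2}$ past the multiplication operator $\phi$ produces, by the pseudo-differential symbol calculus, a principal term $\phi \, \DD^{\alpha/2}$ plus an operator of order $\alpha/2 - 1 < \alpha/2$ whose symbol involves derivatives of $\phi$, i.e.\ powers of $\langle x \rangle^{-r-1}$; since $\varphi' = \phi^2 = \langle x/A\rangle^{-2r}$ up to scaling, such lower-order contributions are absorbable into $C\int u^2 \varphi'$ after Cauchy--Schwarz and a careful bookkeeping of the weights (the gain of one derivative in the commutator is exactly compensated by the gain of one power of $\langle x\rangle^{-1}$ in $\phi'/\phi$). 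The leading term, where both $\DD^{\alpha/2}$ factors land on $\phi u$, is handled by the following computation: $\int (-\DD^\alpha u) u_x \varphi$, after symmetrizing in the two smoothing factors and using that $\partial_x$ applied to $\varphi$ gives $\varphi' = \phi^2$, reorganizes into $-\frac{\alpha-1}{2}\int(\DD^{\alpha/2}(\phi u))^2$ — the constant $(\alpha-1)/2$ being the same algebraic factor that appears in Lemma \ref{LE:app} via the identity $\int(-|D|^\alpha u)(xu_x) = -\frac{\alpha-1}{2}\int |D^{\alpha/2}u|^2$. I would therefore model the argument on that Plancherel computation but localized by $\varphi$ rather than $x$.

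The main obstacle, and where the pseudo-differential machinery is genuinely needed rather than just Plancherel, is controlling the commutators $[\DD^{\alpha/2}, \phi]$ and $[\DD^{\alpha/2}, \varphi_A^{1/2}\text{-type weights}]$ uniformly, and in particular verifying that all remainder operators are bounded on $L^2$ with the correct weighted estimates — i.e.\ that every error term is dominated by $C\int u^2\varphi'$ with $C$ independent of $u$ (the $A$-dependence being allowed, as the statement is for fixed $\varphi$, not $\varphi_A$). For $\alpha/2 \in [1/2,1]$ the operator $\DD^{\alpha/2}$ is of order $\le 1$, so the relevant symbol classes are $S^{\alpha/2}$ with weights in $S^{-r}$; the composition and commutator rules in, e.g., a Weyl or Kohn--Nirenberg calculus give asymptotic expansions whose remainders lie in $S^{\alpha/2 - 2r - 1}$ or better, hence are bounded by $\varphi'$. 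I would make the restriction $r \le \frac12(\alpha+1)$ enter precisely here: it guarantees the weight $\langle x\rangle^{-2r}$ decays slowly enough (equivalently $\phi$ is not too singular) that the symbol of $\phi$ and its derivatives stay in admissible classes and the remainder is integrable against $u^2$. Once the symbol-calculus bookkeeping is done, the lemma follows by collecting the principal term $-\frac{\alpha-1}{2}\int(\DD^{\alpha/2}(\phi u))^2$ and bounding all remainders by $C\int u^2\varphi'$.
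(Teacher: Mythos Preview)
Your overall strategy — identify the principal symbol, extract the constant $(\alpha-1)/2$ from the Plancherel-type identity $\int(-|D|^\alpha u)(xu_x) = -\tfrac{\alpha-1}{2}\int|D^{\alpha/2}u|^2$, and control commutators by symbol calculus — is the right heuristic, but there is a genuine gap in the execution.

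The central problem is that $|\xi|^{\alpha/2}$ (and $|\xi|^\alpha$) is \emph{not smooth at $\xi=0$}, so $\DD^{\alpha/2}$ is not a classical pseudo-differential operator in any of the standard H\"ormander classes $S^{m}$ or $S^{m,q}$. Your plan to ``commute $\DD^{\alpha/2}$ past $\phi$'' via the Kohn--Nirenberg or Weyl calculus, and to place remainders in $S^{\alpha/2-2r-1}$, simply does not apply as stated: the asymptotic expansions require symbol smoothness in $\xi$. This is not a technicality one can wave away — the singularity at $\xi=0$ is precisely what governs the spatial decay of all the kernels involved, and that decay is only algebraic (of order $\langle x\rangle^{-(\alpha+1)}$), not rapid.

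The paper handles this by splitting $\DD^\alpha = \DD^\alpha(1-\chi(D)) + \DD^\alpha\chi(D)$ with a smooth frequency cutoff $\chi$. The high-frequency piece $\DD^\alpha(1-\chi(D))$ has a genuinely smooth symbol in $S^{\alpha,0}$, and there your commutator-calculus intuition goes through (this is Claim~\ref{claim-1}). The low-frequency piece $\DD^\alpha\chi(D)$ is smoothing in $\xi$ but its kernel decays only like $\langle x\rangle^{-(\alpha+1)}$; the paper treats it by explicit kernel manipulations and Schur-type bounds (Claims~\ref{claim-4}--\ref{claim-8}). It is in \emph{these} kernel estimates — specifically bounding operators with kernels like $\langle x\rangle^r\langle y\rangle^r/\langle x-y\rangle^{\alpha+2}$ on the region $|x-y|\gtrsim \langle x\rangle+\langle y\rangle$ — that the restriction $r\le\tfrac12(\alpha+1)$ is actually used, not (as you suggest) in keeping $\phi$ inside an admissible symbol class. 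Your proposal as written misses the low-frequency analysis entirely, and misidentifies where the constraint on $r$ enters.
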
 

\begin{lemma}\label{lemma1}
Let $\alpha \in [1,2]$, and $r\in (\frac 12 ,\frac 12 (\alpha+1)]$.
There exists $C>0$ such that, for all $u\in \SC$,
$$
\int (-\DD^\alpha u)u \varphi'(x) dx \leq 
-\int  \left( \DD^{\frac \alpha 2}(\phi u)\right)^2
 +C \int u^2 \varphi'(x) dx.
$$
\end{lemma}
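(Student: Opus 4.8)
The plan is to compute the left-hand side through the singular-integral representation of $\DD^{\alpha}$, which for $1\le\alpha<2$ and $u\in\SC$ reads
$$\DD^{\alpha}u(x)=c_\alpha\,\mathrm{p.v.}\!\int_{\RR}\frac{u(x)-u(y)}{|x-y|^{1+\alpha}}\,dy,\qquad \int\bigl(\DD^{\frac\alpha2}v\bigr)^2=\frac{c_\alpha}{2}\iint\frac{(v(x)-v(y))^{2}}{|x-y|^{1+\alpha}}\,dx\,dy$$
(the endpoint $\alpha=2$, where $\DD^{2}=-\px^{2}$, is classical — it is the integration by parts used in \cite{MMjmpa}, with error term $\int(\phi')^{2}u^{2}$ and $|\phi'|\le r\phi$ — so I would treat it separately). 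First I would substitute the kernel formula for $\DD^{\alpha}u$, recall $\varphi'=\phi^{2}$, and symmetrise in $x\leftrightarrow y$ the (now absolutely convergent, since $\alpha<2$) double integral, obtaining
$$\int(-\DD^{\alpha}u)\,u\,\varphi'(x)\,dx=-\frac{c_\alpha}{2}\iint\frac{(u(x)-u(y))\bigl(\phi^{2}(x)u(x)-\phi^{2}(y)u(y)\bigr)}{|x-y|^{1+\alpha}}\,dx\,dy.$$

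The algebraic heart of the argument is to pass to the variable $w=\phi u$ and use the pointwise identity
$$\Bigl(\tfrac{w(x)}{\phi(x)}-\tfrac{w(y)}{\phi(y)}\Bigr)\bigl(\phi(x)w(x)-\phi(y)w(y)\bigr)=\bigl(w(x)-w(y)\bigr)^{2}-\frac{(\phi(x)-\phi(y))^{2}}{\phi(x)\phi(y)}\,w(x)w(y),$$
which, combined with the seminorm formula above, yields exactly
$$\int(-\DD^{\alpha}u)\,u\,\varphi'(x)\,dx=-\int\bigl(\DD^{\frac\alpha2}(\phi u)\bigr)^{2}+\frac{c_\alpha}{2}\iint\frac{(\phi(x)-\phi(y))^{2}}{|x-y|^{1+\alpha}\,\phi(x)\phi(y)}\,w(x)w(y)\,dx\,dy.$$
It then remains only to bound the remainder term by $C\int w^{2}=C\int u^{2}\varphi'$. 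Since the kernel $K(x,y)=\dfrac{(\phi(x)-\phi(y))^{2}}{|x-y|^{1+\alpha}\phi(x)\phi(y)}$ is nonnegative and symmetric, the bound $2|w(x)w(y)|\le w(x)^{2}+w(y)^{2}$ reduces everything to the Schur-type estimate $\sup_{x}\int_{\RR}K(x,y)\,dy\le C(\alpha,r)$.

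Establishing that Schur bound is, I expect, the only real work, and it is exactly where the hypotheses $\alpha<2$ and $r\le\frac12(\alpha+1)$ enter. For $|x-y|<1$ I would use $|\phi(x)-\phi(y)|\le C\langle x\rangle^{-r-1}|x-y|$ and $\phi(y)\simeq\langle x\rangle^{-r}$, so $K(x,y)\le C\langle x\rangle^{-2}|x-y|^{1-\alpha}$, integrable near the diagonal precisely because $\alpha<2$. For $|x-y|\ge1$ I would use $(\phi(x)-\phi(y))^{2}\le 2(\phi(x)^{2}+\phi(y)^{2})$ together with the elementary bounds $\langle x\rangle^{r}\langle y\rangle^{-r}+\langle x\rangle^{-r}\langle y\rangle^{r}\le C\langle x-y\rangle^{r}$ (Peetre), giving $K(x,y)\le C\langle x-y\rangle^{r-1-\alpha}$, whose $y$-integral is finite iff $r<\alpha$ — which holds since $r\le\frac12(\alpha+1)<\alpha$ for $\alpha>1$ (and this is precisely why $\alpha=1$ is excluded here and handled separately in \cite{KM}). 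Summing these gives $\sup_x\int K\,dy\le C$ and hence the lemma. Conceptually this whole computation is the bilinear-form version of the pseudo-differential statement that $\phi^{2}\DD^{\alpha}-\phi\,\DD^{\alpha}\phi$ is of order $\alpha-1$ with coefficient $O(\phi')$, but carrying it out directly on the kernel has the advantage of avoiding the low-frequency subtleties of $\DD^{\alpha-1}$ and $\DD^{\alpha-2}$.
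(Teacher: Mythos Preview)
Your argument is correct for $1<\alpha<2$ (and your separate treatments of $\alpha=2$ and $\alpha=1$ are appropriate). The algebraic identity you invoke is exact, and your Schur estimate on the kernel $K(x,y)$ is clean: the near-diagonal piece uses $\alpha<2$, the far piece uses $r\le\frac12(\alpha+1)<\alpha$. The approach, however, is genuinely different from the paper's.

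The paper works operator-theoretically: it writes $\phi^{2}\DD^{\alpha}=\phi\,\DD^{\alpha}\phi+\phi[\phi,\DD^{\alpha}]$ and observes that the real part of the commutator term is $\tfrac12\bigl([\phi,[\phi,\DD^{\alpha}]]u\,\big|\,u\bigr)$. This double commutator is then split by a frequency cutoff $\chi(D)$: at high frequencies the symbol lies in $S^{\alpha-2,-2r-2}\subset S^{0,-2r}$ and PDO calculus gives the bound directly; at low frequencies (Claim~\ref{claim-7}) a kernel estimate with the finer decomposition $|x-y|\lessgtr\tfrac12(\langle x\rangle+\langle y\rangle)$ and a \emph{weighted} Schur test (with weights $\langle x\rangle^{-1/2}$ and $\langle y\rangle^{-r}$) is used. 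In fact your remainder kernel and the paper's double commutator are the same object: the kernel of $[\phi,[\phi,\DD^{\alpha}]]$ is $-c_\alpha(\phi(x)-\phi(y))^{2}|x-y|^{-1-\alpha}$, and after the substitution $u=w/\phi$ the quadratic form becomes exactly your error term. So the two proofs diverge only in how that error is estimated.

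What each approach buys: your singular-integral route is more elementary---no symbol classes, no frequency cutoff, no appeal to the PDO machinery of \cite{Ho}---and the splitting $|x-y|\lessgtr 1$ plus unweighted Schur is shorter than the paper's Claim~\ref{claim-7}. The paper's approach, on the other hand, reaches the endpoint $\alpha=1$, $r=1$ (where your unweighted Schur integral $\int_{|z|\ge1}|z|^{r-1-\alpha}\,dz$ diverges logarithmically); the weighted Schur test absorbs this borderline case. Since the paper's main results concern $1<\alpha<2$, your loss of the endpoint is harmless in context, and you correctly flag it.
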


Assuming Lemmas \ref{lemma2}--\ref{lemma1}, we finish the proof of the proposition.
 First, note that from Lemmas \ref{lemma2} and \ref{lemma1}, by changing variables ($x'=x/A$), we find for any $u\in \SC$,
\begin{align}
&  \int (-\DD^\alpha u)u_x \varphi_A(x) \leq 
- \frac {(\alpha-1)} 2 \int  \left( \DD^{\frac \alpha 2}( u\sqrt{\varphi_A'}\,)\right)^2
 +   \frac C {A^{\alpha}} \int u^2 \varphi_A'(x) dx, \label{eq:41b}\\
&\int (-\DD^\alpha u)u \varphi'_A(x) dx \leq -\int  \left( \DD^{\frac \alpha 2}( u\sqrt{\varphi_A'}\,)\right)^2+\frac C {A^{\alpha}}  \int u^2 \varphi_A'(x) dx.
\label{eq:41c}
\end{align}
By \eqref{eq:KATO}, \eqref{eq:41b}, \eqref{eq:41c}, we find
$$
M_{\varphi}'(t) \leq - \frac 12 \left(\rho_t (1 - \mu) - \frac C {A^\alpha} \right) \int u^2(t) \varphi_A'(\tilde x) dx 
+ \frac 1{2 (\alpha +1)} \int |u|^{2 \alpha +2} \varphi_A'(\tilde x) dx.
$$
Note that from \eqref{ortho2} for  $\epsilon_0$ small enough
$$
\frac 1{\lambda^2}\left|\frac {\rho_s} {\lambda^{\frac 2 \alpha}} -1 \right| 
=  \left|  {\rho_t}  - \frac 1{\lambda^2}\right|
\leq \frac 1{10} \frac 1{\lambda^2}.
$$
In particular, since $\lambda<2$, $\rho_t > 1/5$.
Choosing $A$ large enough, we find
$$
	M_{\varphi}'(t) \leq - \frac {1-\mu} 4 \rho_t   \int u^2(t) \varphi_A'(\tilde x) dx
	+  \frac 1{2 (\alpha +1)} \int |u|^{2 \alpha +2} \varphi_A'(\tilde x) dx.
$$
The constant $A>0$ is now fixed.

Now, we estimate the nonlinear term as in \cite{MMjmpa}, using the decomposition \eqref{eq:35b} and
the decay of $Q$ \eqref{eq:decay}.
Let $a_0$ to be fixed later. 
We decompose the nonlinear term as follows
$$
 \int |u|^{2 \alpha +2} \varphi_A'(\tilde x) dx = \mathbf{I} + \mathbf{II},
$$
where
$$
\mathbf{I} = \int_{|x-\rho(t)|>a_0} |u|^{2 \alpha +2} \varphi_A'(\tilde x) dx
\quad \text{and}\quad
\mathbf{II} = \int_{|x-\rho(t)|<a_0} |u|^{2 \alpha +2} \varphi_A'(\tilde x) dx.
$$
On the one hand 
\begin{align*}
\mathbf{I} & \leq \|u(t)\|_{L^\infty(|x-\rho(t)|>a_0)}^{2 \alpha} \int u^2 \varphi_A'(\tilde x)
\\ & \leq
C \left( \|Q_{\lambda(t)}\|_{L^\infty(|x|>a_0)}^{2 \alpha}
+ \|\lambda^{-\frac 1 \alpha} \eta(t,\lambda^{- \frac 2 \alpha} .)\|_{L^\infty(|x|>a_0)}^{2 \alpha}
\right) \int u^2 \varphi_A'(\tilde x)\\
& \leq C \lambda^{-2}(t) \left( \|Q\|_{L^\infty(|y|\geq 2^{-\frac 2 \alpha} a_0)}^{2 \alpha} + \|\eta\|_{L^\infty}^{2 \alpha}\right) \int u^2 \varphi_A'(\tilde x)\\
& \leq C \rho_t  \left(  \|Q\|_{L^\infty(|y|\geq 2^{-\frac 2 \alpha} a_0)}^{2 \alpha}+ \|\eta\|_{H^{\frac \alpha 2}}^{2 \alpha}\right) \int u^2 \varphi_A'(\tilde x)\leq 
\frac {1-\mu}{8}  \rho_t \int u^2 \varphi_A'(\tilde x),
\end{align*}
for $a_0$ large enough and $\epsilon_0$ small enough (recall that $\lambda(t)\leq 2$, $1\leq \alpha \leq 2$).

On the other hand, $a_0$ being now fixed, by \eqref{eq:35b} and \eqref{ortho2},
$$
\|u(t)\|_{L^\infty}^{2 \alpha} \leq \frac C{\lambda^2(t)} \leq C' \rho_t.
$$
Thus, by the definition of $\varphi_A$
$$
\mathbf{II} \leq \|u(t)\|_{L^2}^2 \|u(t)\|_{L^\infty}^{2 \alpha} \|\varphi_A'(\tilde x)\|_{L^\infty(|x-\rho(t)|<a_0)}\leq C \rho_t \langle x_0 + \mu (\rho(t_2)-\rho(t)) \rangle^{-2r}.
$$

Estimate \eqref{monotonicity1} is thus obtained by
integration on $[t_1,t_2]$. 
\end{proof}

Now, we prove Lemmas \ref{lemma2}--\ref{lemma1}.
\begin{proof}[Proof of Lemma \ref{lemma2}]
We use commutator arguments and pseudo-differential operators tools. We recall here some well-known results which can be found for instance in H\"ormander \cite{Ho} chapter 18. For simplicity we denote by $(u|v)=\int u(x)\overline{v(x)}dx$ and $\|u\|^2=(u|u)$.

We denote by $S^{m,q}$ the symbolic class of symbol defined by

\begin{equation}
 a(x,\xi)\in S^{m,q} \Leftrightarrow
 \left\{
\begin{array}{l}
 a\in{\cal C}^\infty(\RR^2),\\
\forall k, \beta\in\NN ,\ \exists C_{k,\beta}>0 \textrm{ such that }|\partial_x^k\partial_\xi^\beta a(x,\xi)|\le C_{k,\beta}\langle x\rangle^{q-k}\langle\xi \rangle^{m-\beta}
\end{array}
\right.
\end{equation}
Following H\"ormander's notation, we have $S^{m,q}=S(\langle x\rangle^q\langle \xi\rangle^m,g)$ where $g=\frac{dx^2}{\langle x\rangle^2}+\frac{d\xi^2}{\langle \xi\rangle^2}$.
We define the operator associated to $a$ by the following formula for  $u\in {\cal S}$,

\begin{equation}\label{def-PDO}
 a(x,D)u=\frac1{2\pi}\int e^{ix\xi}a(x,\xi)\hat u(\xi)d\xi
\end{equation}
where the Fourier transform is defined by $\hat u(\xi)=\int e^{-ix\xi}u(x)dx$. We recall here some results about the pseudo-differential calculus.

\begin{equation}\label{ineq-L2}
\textrm{Let }a(x,\xi)\in S^{m,q}, \ \exists C>0,\ \forall u\in{\cal S} \textrm{ then }\|a(x,D)u\|\le C\| \langle x\rangle^q\langle D\rangle^mu\|
\end{equation}

\begin{equation}\label{Adjoint}
\begin{array}{l}
\textrm{Let }a(x,\xi)\in S^{m,q}, \textrm{ there exists }b(x,\xi)\in  S^{m,q}\textrm{ such that } a(x,D)^*=b(x,D)\\[2pt]
\textrm{moreover, there exists }r_0(x,\xi)\in S^{m-3,q-3} \textrm{ such that }\\[2pt]
b(x,\xi)=\overline{a(x,\xi)}+\frac1i\partial_x\partial_\xi\overline{a(x,\xi)}-\frac12\partial_x^2\partial_\xi^2\overline{a(x,\xi)}+r_0(x,\xi)
\end{array}
\end{equation}
We recall that $A^*$ is the unique operator satisfying for all $u$ and $v$ in $\cal S$, $(Au|v)=(u|A^*v)$. 
We remark that $\partial_x\partial_\xi\overline{a(x,\xi)}\in S^{m-1,q-1}$ and $\partial_x^2\partial_\xi^2\overline{a(x,\xi)}\in S^{m-2,q-2}$.

\begin{equation}\label{Composition}
\begin{array}{l}
 \textrm{Let }a(x,\xi)\in S^{m,q} \textrm{ and }b(x,\xi)\in S^{m',q'} \textrm{ then there exists }c(x,\xi)\in S^{m+m',q+q'} \\
\textrm{such that } a(x,D)b(x,D)=c(x,D).
\end{array}
\end{equation}
Remark that following \eqref{def-PDO}, we have $a(x,D)D=c(x,D)$ where $c(x,\xi)=a(x,\xi)\xi$.

\begin{equation}\label{Commutator}
\begin{array}{l}
 \textrm{Let }a(x,\xi)\in S^{m,q} \textrm{ and }b(x,\xi)\in S^{m',q'} \textrm{ then there exists }c(x,\xi)\in S^{m+m'-1,q+q'-1} \\
\textrm{such that }[a(x,D),b(x,D)]=c(x,D)\textrm{ moreover}\\
\textrm{there exists }r_0(x,\xi)\in S^{m+m'-2,q+q'-2} \textrm{ such that } c(x,\xi)=\frac1i\{ a,b\}(x,\xi)+r_0(x,\xi) 
\end{array}
\end{equation}
We recall for operators $A$ and $B$ we have $[A,B]=AB-BA$ and $\{a,b\}=\partial_\xi a\partial_xb-\partial_xa\partial_\xi b$. In some cases we have exact formula, for instance $[D,a(x,D)]=\frac1i(\partial_xa)(x,D)$.

In lemma \ref{lemma2} $u$ is real valued but it is convenient to write the integral in the following form
\begin{equation}\label{eq:imaginary-form}
\int (-\DD^\alpha u)u_x \varphi(x) =\im (\varphi(x)Du| \DD^\alpha u)=-\frac{i}2((\DD^\alpha\varphi D-D\varphi\DD^\alpha)u|u).
\end{equation}

Let $\chi\in {\cal C}^\infty_0(\RR)$ such that $0\le \chi \le 1$, $\chi(\xi)=1$ if $|\xi|\le 1$ and $\chi(\xi)=0$ if $|\xi|\ge 2$.
We set 
\begin{equation}\label{eq:T-and-T1-T2}
\begin{array}{l}
 T=\DD^\alpha\varphi D-D\varphi\DD^\alpha=T_1+T_2\textrm{ where}\\
T_1=\DD^\alpha(1-\chi(D))\varphi D-D\varphi (1-\chi(D))\DD^\alpha\\
T_2=\DD^\alpha\chi(D)\varphi D-D\varphi \chi(D)\DD^\alpha
\end{array}
\end{equation}

The proof of lemma \ref{lemma2} follows from \eqref{eq:imaginary-form}, \eqref{eq:T-and-T1-T2} and the two following claims.

\begin{claim}\label{claim-1}
 There exists $C>0$ such that for all $u\in {\cal S}$ we have 
\begin{equation}
 i(T_1u|u)=(\alpha-1)(\phi\DD^\alpha(1-\chi(D))\phi u|u)+R
\end{equation}
where $R$ satisfies $|R|\le C\|\phi u\|^2$.
\end{claim}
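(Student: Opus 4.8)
The plan is to reduce the claim to a symbol computation for $iT_1$ followed by the $L^2$ boundedness \eqref{ineq-L2}, working throughout with $u\in\SC$. I would write $P_1=\DD^\alpha(1-\chi(D))=a_1(x,D)$ with $a_1(\xi)=|\xi|^\alpha(1-\chi(\xi))$: since $1-\chi$ vanishes near $\xi=0$, the symbol $a_1$ is smooth and $x$-independent, hence lies in $S^{\alpha,0}$, which is exactly the point of the splitting \eqref{eq:T-and-T1-T2} and makes the calculus \eqref{ineq-L2}--\eqref{Commutator} available for $P_1$. Using $[D,\varphi]=\frac1i\varphi'$ and $[D,P_1]=0$, I would first rewrite $T_1=[P_1,\varphi]D+i\varphi'P_1$, hence $iT_1=i[P_1,\varphi]D-\varphi'P_1$. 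A useful preliminary remark is that in the symbol expansion of $[P_1,\varphi]$ the terms $a_1\varphi$ cancel, so only the derivatives $\varphi',\varphi'',\dots$ of $\varphi$ enter, and each of these satisfies $\varphi^{(k)}=\varphi'\cdot p_{k-1}$ with $p_{k-1}\in S^{0,-(k-1)}$ --- it is here that $r>\frac12$ is used (so that $\varphi'\in S^{0,-2r}$).

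Next I would compute the full symbol of $iT_1$ via \eqref{Composition}: the symbol of $\varphi'P_1$ is exactly $\varphi'(x)a_1(\xi)$, and that of $i[P_1,\varphi]D$ is $\xi a_1'(\xi)\varphi'(x)-\tfrac i2\xi a_1''(\xi)\varphi''(x)$ plus terms of order $\le(\alpha-2,-2r-2)$. The two elementary identities $\xi a_1'(\xi)-a_1(\xi)=(\alpha-1)a_1(\xi)+c_1(\xi)$ and $\xi a_1''(\xi)=(\alpha-1)a_1'(\xi)+c_2(\xi)$, with $c_1,c_2\in C_0^\infty$ supported in $\{1\le|\xi|\le2\}$ (they come from $\chi'$), then give that the symbol of $iT_1$ is $(\alpha-1)\varphi'a_1-\tfrac i2(\alpha-1)a_1'\varphi''$ plus cutoff terms ($\varphi'c_1$, $c_2\varphi''$, \dots) plus terms of order $\le(\alpha-2,-2r-2)$. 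Comparing with the symbol of $(\alpha-1)\phi P_1\phi=(\alpha-1)\phi(x)a_1(x,D)\phi(x)$, which by \eqref{Composition} equals $(\alpha-1)\bigl(\phi^2a_1+\tfrac1i\phi\phi'a_1'\bigr)$ up to order $(\alpha-2,-2r-2)$, one sees, using $\phi^2=\varphi'$, $\varphi''=2\phi\phi'$ and $\tfrac1i=-i$, that the principal symbol $(\alpha-1)\varphi'a_1$ and the subprincipal symbol $-\tfrac i2(\alpha-1)a_1'\varphi''$ match. This is the ``special cancellation'': in $R:=iT_1-(\alpha-1)\phi P_1\phi$ every contribution of $\xi$-order $\alpha$ and $\alpha-1$ disappears.

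It then remains to estimate $(Ru\,|\,u)$. Since $\alpha\le2$, and using the divisibility $\varphi^{(k)}=\varphi'p_{k-1}$, every term of the symbol of $R$ is of the form $\varphi'(x)b(x,\xi)$ with $b\in S^{0,0}$ (a finite sum: the cutoff terms contribute $b$'s compactly supported in $\xi$, the rest contribute $b\in S^{\alpha-2,-2}\subset S^{0,0}$). For such an operator one has, modulo further remainders of strictly lower order handled the same way, $\bigl((\varphi'b)(x,D)u\,\big|\,u\bigr)=\bigl(b(x,D)\phi u\,\big|\,\phi u\bigr)+\cdots\le C\|\phi u\|^2$ by \eqref{ineq-L2} (this is where $\varphi'=\phi^2$ is split symmetrically); for a cutoff piece $\varphi'(x)g(D)=\phi(x)\bigl(\phi g(D)\bigr)$ one writes instead $\bigl(\varphi'(x)g(D)u\,\big|\,u\bigr)=\bigl(\phi\,g(D)\phi^{-1}\cdot\phi u\,\big|\,\phi u\bigr)$ and checks, via \eqref{Composition} and $\phi^{-1}=\langle x\rangle^r\in S^{0,r}$, that $\phi\,g(D)\phi^{-1}$ has symbol in $S^{0,0}$, hence is $L^2$-bounded. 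This gives $|R|=|(Ru\,|\,u)|\le C\|\phi u\|^2$, which is the claim.

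The step I expect to be the main obstacle is making the special cancellation rigorous --- i.e.\ verifying that the subprincipal symbols of $iT_1$ and of $(\alpha-1)\phi P_1\phi$ coincide exactly. A surviving remainder of $\xi$-order $\alpha-1>0$ could only be controlled by $\|\phi u\|_{H^{\alpha/2}}^2$, not by $\|\phi u\|^2$, so one really has to track both symbols to subprincipal order, the identities $\varphi''=2\phi\phi'$ and $\xi a_1''=(\alpha-1)a_1'+c_2$ being exactly what makes the cancellation work.
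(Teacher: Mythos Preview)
Your approach is correct and constitutes a genuine alternative to the paper's proof. Both proofs are pseudodifferential symbol computations, but the organizing idea differs.

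The paper writes $T_1=a(x,D)^*D-Da(x,D)$ with $a(x,\xi)=\varphi(x)|\xi|^\alpha(1-\chi(\xi))$ and exploits the structural fact that $(T_1u\,|\,u)=2i\,\im(Du\,|\,a(x,D)u)$ is purely imaginary. This allows them to invoke an auxiliary Claim~\ref{claim-3} (for a real-valued symbol $b\in S^{m,q}$, $|\im(b(x,D)u\,|\,u)|$ is controlled at order $(m-1,q-1)$). In particular they never need to match subprincipal symbols: the subprincipal piece $(\partial_x^2\partial_\xi^2 a)(x,D)D$ has real symbol, so Claim~\ref{claim-3} disposes of its imaginary part immediately; likewise the commutator $\phi[\phi,P_1]$ is real-symbol at top order. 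Your route bypasses Claim~\ref{claim-3} entirely: you compute the full symbol of $iT_1$ and of $(\alpha-1)\phi P_1\phi$ to subprincipal order and verify they coincide via the identities $\xi a_1'-a_1=(\alpha-1)a_1+c_1$, $\xi a_1''=(\alpha-1)a_1'+c_2$, $\varphi''=2\phi\phi'$. This is more computational but more transparent; the paper's route is shorter once Claim~\ref{claim-3} is in hand.

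Two minor points worth tightening. First, you write $\bigl((\varphi'b)(x,D)u\,\big|\,u\bigr)=\bigl(b(x,D)\phi u\,\big|\,\phi u\bigr)+\cdots$; in fact $(\varphi'b)(x,D)=\varphi'(x)b(x,D)$ exactly, so this equals $\bigl(\phi\,b(x,D)u\,\big|\,\phi u\bigr)$, and then $\|\phi\,b(x,D)\phi^{-1}\|_{L^2\to L^2}<\infty$ gives the bound --- your conclusion is right. Second, the abstract remainders from the symbolic calculus (in $S^{\alpha-2,-2r-2}$) do not obviously factor as $\varphi'(x)b(x,\xi)$; but this factorization is unnecessary, since any $r\in S^{0,-2r}$ already satisfies $|(r(x,D)u\,|\,u)|=|(\langle x\rangle^r r(x,D)u\,|\,\langle x\rangle^{-r}u)|\le C\|\phi u\|^2$ by \eqref{ineq-L2} (this is exactly \eqref{Reste-1} in the paper). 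Using this directly would streamline your last paragraph.
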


\begin{claim}\label{claim-2}
  There exists $C>0$ such that for all $u\in {\cal S}$ we have 
\begin{equation}
 i(T_2u|u)=(\alpha-1)(\phi\DD^\alpha\chi(D)\phi u|u)+R
\end{equation}
where $R$ satisfies $|R|\le C \|\phi u\|^2$.
\end{claim}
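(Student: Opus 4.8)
The main point I would stress at the outset is that, for \emph{this} claim, the $(\alpha-1)$‑term is essentially bookkeeping: since $|\xi|^\alpha\chi(\xi)$ is a bounded function, $\bigl(\phi\,\DD^\alpha\chi(D)\,\phi u\,\big|\,u\bigr)=\bigl(\DD^\alpha\chi(D)\,\phi u\,\big|\,\phi u\bigr)$ is itself $\le C\|\phi u\|^2$, so $R:=i(T_2u|u)-(\alpha-1)\bigl(\phi\,\DD^\alpha\chi(D)\,\phi u\,\big|\,u\bigr)$ automatically obeys $|R|\le C\|\phi u\|^2$ once one knows $|(T_2u|u)|\le C\|\phi u\|^2$. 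The reason the statement nevertheless displays the $(\alpha-1)$‑term is that, combined via \eqref{eq:imaginary-form} with the analogous output of Claim \ref{claim-1}, the leading terms of $i(T_1u|u)+i(T_2u|u)$ recombine into $(\alpha-1)\bigl(\phi\,\DD^\alpha\,\phi u\,\big|\,u\bigr)=(\alpha-1)\|\DD^{\alpha/2}(\phi u)\|^2$, which is exactly what is needed in Lemma \ref{lemma2}. So I would reduce to proving $|(T_2u|u)|\le C\|\phi u\|^2$, and recover the displayed form at the end.

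The strategy for the bound is to rewrite $T_2$ so that $\varphi$ enters only through its derivatives, all of which decay ($\varphi'=\phi^2=\langle x\rangle^{-2r}$, $\varphi''=O(\langle x\rangle^{-2r-1})$, \dots), the frequency cut‑off $\chi(D)$ then making every operator in sight $L^2$‑bounded. Write $\psi(\xi)=|\xi|^\alpha\chi(\xi)$, so that $\DD^\alpha\chi(D)=\psi(D)$ is a Fourier multiplier with $0\le\psi\le 2^\alpha$, $\mathrm{supp}\,\psi\subset\{|\xi|\le2\}$, commuting with $D$, and set $\widetilde\psi(\xi)=\xi\psi(\xi)$, so $\widetilde\psi(D)=\psi(D)D$ is likewise bounded. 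Using $[D,\varphi]=\tfrac1i\varphi'$ one gets from \eqref{eq:T-and-T1-T2} the identity
\[
T_2=[\psi(D),\varphi]D-\tfrac1i\varphi'\psi(D)=[\widetilde\psi(D),\varphi]-\tfrac1i\bigl(\psi(D)\varphi'+\varphi'\psi(D)\bigr).
\]
For the last two terms one has $\bigl(\psi(D)\varphi'u\,\big|\,u\bigr)=\bigl(\varphi'u\,\big|\,\psi(D)u\bigr)=\bigl(\phi u\,\big|\,\phi\psi(D)u\bigr)$, and $\|\phi\psi(D)u\|\le\|\psi(D)(\phi u)\|+\|[\phi,\psi(D)]u\|\le C\|\phi u\|$, the first term by $L^2$‑boundedness of $\psi(D)$ and the second by the commutator bound below (and symmetrically for $\bigl(\varphi'\psi(D)u\,\big|\,u\bigr)$); so these are $O(\|\phi u\|^2)$. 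For $[\widetilde\psi(D),\varphi]$ I would use the exact kernel identity $[\widetilde\psi(D),\varphi]u(x)=\int(\mathcal F^{-1}\widetilde\psi)(x-y)\bigl(\varphi(y)-\varphi(x)\bigr)u(y)\,dy$.

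The estimates then rest on two ingredients. First, the convolution kernels of $\psi(D)$ and $\widetilde\psi(D)=\psi(D)D$ inherit the power decay of $\mathcal F^{-1}(|\xi|^\alpha)$ near infinity, namely $|(\mathcal F^{-1}\psi)(z)|\le C\langle z\rangle^{-1-\alpha}$ and $|(\mathcal F^{-1}\widetilde\psi)(z)|\le C\langle z\rangle^{-2-\alpha}$ — these are precisely the $\alpha$‑stable Lévy tails related to Lemma \ref{le:GK}. Second, $|\varphi(y)-\varphi(x)|=\bigl|\int_x^y\langle s\rangle^{-2r}\,ds\bigr|$, which since $2r>1$ is $\le C\langle\min(|x|,|y|)\rangle^{1-2r}$ when $x,y$ lie on the same side and is bounded in general. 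Conjugating by $\phi$ (substituting $u=\phi^{-1}(\phi u)$), using Peetre's inequality $\langle y\rangle^r\le C\langle x\rangle^r\langle x-y\rangle^r$, and splitting the integrals according to whether $|x-y|\le\langle x\rangle$ or $|x-y|>\langle x\rangle$, one checks by Schur's test that both $\bigl([\widetilde\psi(D),\varphi]u\,\big|\,u\bigr)$ and $\|[\phi,\psi(D)]u\|$ (the commutator bound, from the exact kernel $[\phi,\psi(D)]u(x)=\int(\mathcal F^{-1}\psi)(x-y)(\phi(x)-\phi(y))u(y)\,dy$) obey the required bounds. This is exactly where $r\in(\tfrac12,\tfrac12(\alpha+1)]$ enters: $r>\tfrac12$ makes the $\varphi$‑tails summable, while $r\le\tfrac12(\alpha+1)<\alpha$ — the strict inequality using $\alpha>1$ — makes the weighted kernels lie in $L^1$. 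Finally, to recover the displayed form one extracts the leading symbol $\tfrac1i\widetilde\psi'(\xi)\varphi'(x)$ of $[\widetilde\psi(D),\varphi]$ and uses $\widetilde\psi'(\xi)=(1+\alpha)|\xi|^\alpha\chi(\xi)+\xi|\xi|^\alpha\chi'(\xi)=(1+\alpha)\psi(\xi)+\psi_1(\xi)$ with $\psi_1\in C^\infty$ supported in $\{1\le|\xi|\le2\}$, so that $i([\widetilde\psi(D),\varphi]u|u)=(1+\alpha)\bigl(\varphi'\psi(D)u\,\big|\,u\bigr)+O(\|\phi u\|^2)$; together with $-\bigl(\psi(D)\varphi'u\,\big|\,u\bigr)-\bigl(\varphi'\psi(D)u\,\big|\,u\bigr)=-2\bigl(\phi\psi(D)\phi u\,\big|\,u\bigr)+O(\|\phi u\|^2)$ (and $\bigl(\varphi'\psi(D)u\,\big|\,u\bigr)=\bigl(\phi\psi(D)\phi u\,\big|\,u\bigr)+O(\|\phi u\|^2)$) this gives $i(T_2u|u)=(\alpha-1)\bigl(\phi\,\DD^\alpha\chi(D)\,\phi u\,\big|\,u\bigr)+R$, $|R|\le C\|\phi u\|^2$.

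The main obstacle is precisely the low regularity: for non‑integer $\alpha$ the symbol $|\xi|^\alpha$, hence $\psi=|\xi|^\alpha\chi$, is only $C^1$ at $\xi=0$, so the smooth pseudo‑differential calculus recalled in \eqref{ineq-L2}--\eqref{Commutator} cannot be applied verbatim — this is the whole reason for the splitting \eqref{eq:T-and-T1-T2}, the genuinely singular high‑frequency part $|\xi|^\alpha(1-\chi(\xi))$ being $C^\infty$ and treated in Claim \ref{claim-1}. For $T_2$, the compact $\xi$‑support of $\chi$ repairs this: all the multipliers occurring are $L^2$‑bounded with convolution kernels of explicit power decay, so the formal symbol calculus is replaced throughout by the hands‑on kernel and Schur estimates above; carrying out this low‑regularity bookkeeping, and keeping track of the admissible range of $r$, is the only delicate point, everything else being soft.
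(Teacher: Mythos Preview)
Your proposal is correct and follows essentially the same route as the paper. Your identity $T_2=[\widetilde\psi(D),\varphi]-\tfrac1i(\psi(D)\varphi'+\varphi'\psi(D))$ is exactly the paper's decomposition $T_2=A_1+A_2+A_3$ with $A_1=[\DD^\alpha D\chi(D),\varphi]$ and $A_2+A_3=i(\psi(D)\varphi'+\varphi'\psi(D))=2i\phi\psi(D)\phi+i\bigl[[\psi(D),\phi],\phi\bigr]$; the paper then proves $i(A_1u|u)=(\alpha+1)(\phi\psi(D)\phi u|u)+R$ (its Claim~4) and $|(A_3u|u)|\le C\|\phi u\|^2$ (its Claim~7) via the same kernel decay $|\mathcal F^{-1}\widetilde\psi|\le C\langle\cdot\rangle^{-\alpha-2}$, $|\mathcal F^{-1}\psi|\le C\langle\cdot\rangle^{-\alpha-1}$, the same two-region estimates for $\varphi(y)-\varphi(x)$ and $\phi(x)-\phi(y)$, and the same Schur tests you outline. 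The one genuinely new remark in your write-up is the opening observation that $(\phi\,\DD^\alpha\chi(D)\,\phi u\,|\,u)$ is itself $O(\|\phi u\|^2)$ by boundedness of $\psi$, so for the statement of Claim~\ref{claim-2} alone it would suffice to show $|(T_2u|u)|\le C\|\phi u\|^2$; the paper does not make this explicit, going directly for the $(\alpha-1)$ form needed in Lemma~\ref{lemma2}.
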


\begin{proof}[Proof of Claim  \ref{claim-1}]
In the following we set $a(x,\xi)=\varphi(x)|\xi|^\alpha(1-\chi(\xi))$ and we have $a(x,\xi)\in S^{\alpha,0}$. With this notation we have $T_1=a(x,D)^*D-Da(x,D)$. Following \eqref{Adjoint}, the symbol of $a(x,D)^*$ is $a(x,\xi)+\frac1i \px\pxi a(x,\xi)-\frac12\px^2\pxi^2a(x,\xi)+r_0(x,\xi)$ where $r_0(x,\xi)\in S^{\alpha-3,-3}$. We obtain, following \eqref{Commutator} and remark below, 
\begin{equation}\label{T1-bis}
 \begin{array}{ll}
  T_1&=[a(x,D),D]+\frac1i (\px\pxi a)(x,D)D-\frac12(\px^2\pxi^2)a(x,D)D+r_1(x,D)\\[10pt]
&=i(\px a)(x,D)+\frac1i (\px\pxi a)(x,D)D-\frac12(\px^2\pxi^2a)(x,D)D+r_1(x,D)
 \end{array}
\end{equation}
where $r_1(x,\xi)=r_0(x,\xi)\xi\in S^{\alpha -2,-3}\subset S^{0,-2r}$. We have, by \eqref{ineq-L2}

\begin{equation}\label{Reste-1}
 |(r_1(x,D)u|u)|=|(\langle x\rangle^rr_1(x,D)u|\langle x\rangle^{-r}u)|\le C\left\| \frac{u}{\langle x\rangle^r}\right\|^2
\end{equation}
because $\langle x\rangle^rr_1(x,D)=r_2(x,D)$ where $r_2(x,\xi)\in S^{0,-r}$.

We remark that the symbol of $(\px^2\pxi^2a)(x,D)D$ is real valued, we can apply the following claim.

\begin{claim}\label{claim-3}
 Let $b(x,\xi)\in S^{m,q}$, real valued then there exists $C>0$ such that for all $u\in{\cal S}$, we have
\begin{equation}
 |\im (b(x,D)u|u)|\le C\| \langle x\rangle^{\frac{q-1}2 }\langle D\rangle^{\frac{m-1}2}u\|^2
\end{equation}
\end{claim}

By definition $(T_1u|u)=2i\im(Du|a(x,D)u)$, it is sufficient to consider the imaginary part of the term of \eqref{T1-bis}. In particular $\im ((\px^2\pxi^2a)(x,D)Du|u)$ and we have $(\px^2\pxi^2a)(x,\xi)\xi\in S^{\alpha-1,-2}$. The Claim \ref{claim-3} gives
\begin{equation}\label{Reste-2}
 |\im ((\px^2\pxi^2a)(x,D)Du|u)|\le C\|\langle x\rangle^{-\frac32}\langle D\rangle^\frac{\alpha-2}2u\|^2\le C\left\| \frac{u}{\langle x\rangle^r}\right\|^2
\end{equation}
following \eqref{ineq-L2} and $\langle x\rangle^{-\frac32}\langle \xi\rangle^\frac{\alpha-2}2\in S^{\frac{\alpha-2}2,-\frac32}\subset S^{0,-2r}$.

\begin{proof}[Proof of Claim  \ref{claim-3}]
We have $2i\im(b(x,D)u|u)=((b(x,D)-b(x,D)^*)u|u)$. By \eqref{Adjoint}  we have $b(x,D)^*=b(x,D)+r_0(x,D)$ where $r_0(x,\xi)\in S^{m-1,q-1}$. We have $2i\im(b(x,D)u|u)=(\langle x\rangle^{-\frac{q-1}2}\langle D\rangle^{-\frac{m-1}2}r_0(x,D)u|\langle x\rangle^{\frac{q-1}2}\langle D\rangle^{\frac{m-1}2}u)$ and following \eqref{Composition} $\langle x\rangle^{-\frac{q-1}2}\langle D\rangle^{-\frac{m-1}2}r_0(x,D)=c(x,D)$ where $c(x,\xi)\in S^{\frac{m-1}2,\frac{q-1}2}$. We conclude by \eqref{ineq-L2}.
\end{proof} 

Following \eqref{T1-bis}, \eqref{Reste-1},   \eqref{Reste-2}  and notation of Claim  \ref{claim-1}, we have
\begin{equation}\label{T1-ter}
(T_1u|u)=(i((\px a)(x,D)-(\px \pxi a)(x,D)D)u|u)+R
\end{equation}

We have 
\begin{equation}\label{eq:claim3}
\begin{array}{lll}
(\px a)(x,\xi)-(\px \pxi a)(x,\xi)\xi&=&\varphi'(x)|\xi|^\alpha(1-\chi(\xi))\\
&&-\alpha\varphi'(x)|\xi|^{\alpha-2}|\xi|^2(1-\chi(\xi))+\varphi'(x)|\xi|^\alpha\chi'(\xi)\\
&=&(1-\alpha)\varphi'(x)|\xi|^\alpha(1-\chi(\xi))+\varphi'(x)|\xi|^\alpha\chi'(\xi).
\end{array}
\end{equation}
We have $\varphi'(x)|\xi|^\alpha\chi'(\xi)\in S^{0,-2r}$ because $\chi'$ is compact supported in $\RR\setminus 0$. We have

\begin{equation}\label{Reste-3}
|(\varphi'(x))|D|^\alpha\chi'(D)u|u)|=|(\langle x\rangle^r\varphi'(x))|D|^\alpha\chi'(D)u| \langle x\rangle^{-r} u)|\le C\left\| \frac{u}{\langle x\rangle^r}\right\|^2
\end{equation}
following \eqref{Composition} and \eqref{ineq-L2}. By \eqref{T1-ter},  \eqref{eq:claim3} and \eqref{Reste-3}, we obtain
\begin{equation}\label{T1-quatro}
\begin{array}{ll}
(T_1u|u)&=(1-\alpha)(i\phi^2\DD^\alpha(1-\chi(D))u|u)+R\\
&=(1-\alpha)\big((i\phi\DD^\alpha(1-\chi(D))\phi u|u)+(i\phi[\phi,\DD^\alpha(1-\chi(D))]u|u)\big)+R.
\end{array}
\end{equation}
Following \eqref{Commutator}, we have $i[\phi,\DD^\alpha(1-\chi(D))]=c(x,D)+r_0(x,D)$ where $c(x,\xi)=\{\phi,|\xi|^\alpha(1-\chi(\xi))\}=-\phi'(x)\pxi (|\xi|^\alpha(1-\chi(\xi)))$ and $r_0(x,\xi)\in S^{\alpha-2,-r-2}\subset S^{0,-r}$ then $|(r_0(x,D)u|\phi  u)|\le C\| \langle x\rangle^{-r}u\|^2$.
We have $\phi(x) c(x,\xi)\in S^{\alpha-1,-2r-1}\subset S^{1,-2r+1}$ and real valued, we can apply the Claim \ref{claim-3} to obtain $|\im (\phi(x) c(x,D)u|u)|\le C\| \langle x\rangle^{-r}u\|^2$. With \eqref{T1-quatro}, this proves Claim \ref{claim-1}.
\end{proof}

\begin{proof}[Proof of Claim  \ref{claim-2}]
Since $[D,a(x,D)]= \frac 1i (\partial_x a)(x,D)$ for any $a(x,D)$,
\begin{equation}\label{eq1:claim2}
 \begin{array}{ll}
 T_2&=\DD^\alpha D\chi(D)\varphi(x)-\varphi(x)\DD^\alpha D\chi(D)+i\DD^\alpha\chi(D)\varphi'(x)+i\varphi'(x)\DD^\alpha\chi(D)\\
&=[\DD^\alpha D\chi(D),\varphi(x)]+2i\phi \DD^\alpha \chi(D)\phi +i\big[ [ \DD^\alpha \chi(D),\phi],\phi\big]=A_1+A_2+A_3.
\end{array}
\end{equation}

We remark that $D\DD^\alpha\chi(D)u=g*u$ where $\hat g(\xi)=|\xi|^\alpha\xi\chi(\xi)$.

\begin{claim}\label{claim-4}
 Let $A_1=[\DD^\alpha D\chi(D),\varphi]$, then there exists $C>0$ such that for all $u\in{\cal S}$,
\begin{equation}\label{eq:claim4}
 i(A_1u|u)=(\alpha+1)(\phi\DD^\alpha\chi(D)\phi u|u)+R
\end{equation}
where $|R|\le C\| \langle x\rangle^{-r}u\|^2$.
In particular,
\begin{equation}\label{eq:claim4b}
 i((A_1+A_2)u|u)=(\alpha-1)(\phi\DD^\alpha\chi(D)\phi u|u)+R.
\end{equation}
\end{claim}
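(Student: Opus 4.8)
The plan is to compute the symbol of $A_1 = [\DD^\alpha D\chi(D),\varphi]$ with the pseudo-differential calculus recalled above, isolate its principal part, and match it to $(\alpha+1)\phi\DD^\alpha\chi(D)\phi$ up to operators controlled by $\|\phi u\|^2$. Write $\DD^\alpha D\chi(D) = g(D)$ with $g(\xi) = |\xi|^\alpha\xi\chi(\xi) = \mathrm{sgn}(\xi)|\xi|^{\alpha+1}\chi(\xi)$, so that $g$ is compactly supported and, since $\alpha>1$, $g$, $\pxi g$ and $\pxi^2 g$ are bounded; note
$$
\pxi g(\xi) = (\alpha+1)|\xi|^\alpha\chi(\xi) + |\xi|^\alpha\xi\,\chi'(\xi).
$$
Since $\varphi\in S^{0,0}$ with $\varphi' = \phi^2 = \langle x\rangle^{-2r}$ and $\varphi^{(k)}\lesssim\langle x\rangle^{-2r-(k-1)}$ for $k\ge 1$, the commutator rule \eqref{Commutator}, applied to the pure Fourier multiplier $g(D)$ (for which $\{g,\varphi\} = \pxi g(\xi)\,\varphi'(x)$ and $\varphi(x)g(D)$ has symbol $\varphi(x)g(\xi)$ exactly), gives
$$
A_1 = \Big(\tfrac1i\,\pxi g(\xi)\,\varphi'(x)\Big)(x,D) + \mathcal{E},
$$
where $\mathcal{E}$ gathers the lower-order contributions of the expansion, each carrying the extra spatial decay $\langle x\rangle^{-1}$ (through $\varphi''$, $\varphi'''$, \dots), so that $\mathcal{E}\in S^{0,-2r}$. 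By the $\langle x\rangle^{\pm r}$-conjugation argument used for \eqref{Reste-1}, \eqref{Reste-3}, every $b(x,D)$ with $b\in S^{0,-2r}$ obeys $|(b(x,D)u|u)|\le C\|\langle x\rangle^{-r}u\|^2 = C\|\phi u\|^2$; hence $\mathcal{E}$ contributes only to $R$.

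It remains to treat the principal symbol $\tfrac1i\pxi g(\xi)\varphi'(x)$. Its $\chi'$-part $\tfrac1i|\xi|^\alpha\xi\chi'(\xi)\varphi'(x)$ has symbol in $S^{-\infty,-2r}$ ($\chi'$ being bounded and supported in $1\le|\xi|\le 2$), hence again contributes to $R$. Its main part $\tfrac{\alpha+1}{i}|\xi|^\alpha\chi(\xi)\varphi'(x)$ is the symbol of $\tfrac{\alpha+1}{i}\varphi'(x)\DD^\alpha\chi(D) = \tfrac{\alpha+1}{i}\phi^2\DD^\alpha\chi(D)$ (multiplication on the left composes exactly), so
$$
i(A_1u|u) = (\alpha+1)\big(\phi^2\DD^\alpha\chi(D)u\,\big|\,u\big) + R,\qquad |R|\le C\|\phi u\|^2.
$$
Finally, write $\phi^2\DD^\alpha\chi(D) = \phi\,\DD^\alpha\chi(D)\,\phi + \phi\,[\phi,\DD^\alpha\chi(D)]$; by \eqref{Commutator}, $[\phi,\DD^\alpha\chi(D)]$ has symbol in $S^{-\infty,-r-1}$ (it picks up the factor $\phi'\lesssim\langle x\rangle^{-r-1}$), whence $\phi[\phi,\DD^\alpha\chi(D)]\in S^{-\infty,-2r-1}\subset S^{0,-2r}$ and is absorbed in $R$. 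This gives \eqref{eq:claim4}. The ``in particular'' statement \eqref{eq:claim4b} then follows immediately: since $A_2 = 2i\phi\DD^\alpha\chi(D)\phi$, one has $i(A_2u|u) = -2(\phi\DD^\alpha\chi(D)\phi u|u)$, and $(\alpha+1)-2 = \alpha-1$.

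The main obstacle is the remainder bookkeeping. One must check that every lower-order term — the calculus remainders $\mathcal{E}$, the $\chi'$-piece, and $\phi[\phi,\DD^\alpha\chi(D)]$ — genuinely lands in a class $S^{0,-2r}$ (so as to be dominated by $\|\phi u\|^2$ and not merely by $\|u\|\,\|\phi u\|$); because $r$ may be as large as $\tfrac12(\alpha+1)$ (close to $\tfrac32$), the expansion of $[g(D),\varphi]$ must be carried past the single term recorded in \eqref{Commutator}, and the successive spatial decay gains $\langle x\rangle^{-1}$ coming from $\varphi''$, $\varphi'''$, \dots must be tracked precisely. Intertwined with this is the mild non-smoothness of $g(\xi) = |\xi|^\alpha\xi\chi(\xi)$ at $\xi = 0$ (only $\mathcal{C}^2$ there, with $\pxi^3 g\sim|\xi|^{\alpha-2}$): this is handled by splitting $g = g_{\mathrm{hi}} + g_{\mathrm{lo}}$ with $g_{\mathrm{hi}}$ supported away from the origin — a genuine $\mathcal{C}^\infty$ compactly supported multiplier, for which the calculus applies verbatim — and $g_{\mathrm{lo}}$ supported near $\xi = 0$, treated via its convolution kernel and a second-order Taylor expansion of $\varphi(y)-\varphi(x)$, which again produces the principal operator $(\alpha+1)\varphi'(x)\DD^\alpha\chi(D)$ and a remainder weighted by $\varphi''$ and involving only $\pxi^2 g_{\mathrm{lo}}$, bounded since $\alpha>1$.
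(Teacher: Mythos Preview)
Your approach—extract the principal symbol of $[g(D),\varphi]$ via the commutator calculus, then commute one $\phi$ across—runs into a genuine obstruction. The symbol $g(\xi)=|\xi|^\alpha\xi\chi(\xi)$ is not $C^\infty$ at $\xi=0$, hence does not belong to any class $S^{m,q}$, so \eqref{Commutator} simply does not apply; you acknowledge this at the end for $g$, but the same issue recurs, unaddressed, in your final step $[\phi,\DD^\alpha\chi(D)]$, where $|\xi|^\alpha\chi(\xi)$ is likewise non-smooth and your appeal to \eqref{Commutator} is again unjustified. Your proposed patch for $g_{\mathrm{lo}}$—a second-order Taylor expansion of $\varphi(y)-\varphi(x)$ with remainder ``weighted by $\varphi''$''—does not obviously close either: the Taylor remainder carries $\varphi''(\theta_{x,y})$ at an intermediate point, and when $|x-y|$ is large (say $x=-N$, $y=N$, $\theta\approx 0$) this gives no decay at all, so the $\langle x\rangle^{-r}\langle y\rangle^{-r}$ structure needed for a Schur bound by $\|\phi u\|^2$ is not apparent.

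The paper takes a different route and avoids the calculus entirely for $A_1$, working directly with the kernel: $A_1u(x)=\int g(x-y)(\varphi(y)-\varphi(x))u(y)\,dy$. The key device (Claim~\ref{claim-5}) is the \emph{symmetric factored} decomposition
\[
\varphi(y)-\varphi(x)=\frac{y-x}{\langle x\rangle^r\langle y\rangle^r}+Q(x,y)=(y-x)\,\phi(x)\phi(y)+Q(x,y),
\]
with explicit pointwise bounds on $Q$ in the near region $|x-y|\le\tfrac12(\langle x\rangle+\langle y\rangle)$ and the far region. The main term is then immediately $\phi(x)\,(h*(\phi u))(x)$ with $h(z)=-zg(z)$, whose Fourier transform $-i\partial_\xi(\xi|\xi|^\alpha\chi(\xi))$ produces the factor $(\alpha+1)$; crucially, $\phi$ already sits on both sides, so no subsequent commutation step—and hence no smooth symbolic calculus—is needed. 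The $Q$-remainder is then controlled by Schur-type kernel estimates (Claim~\ref{claim-6}) using the decay $|g(z)|\le C\langle z\rangle^{-(\alpha+2)}$ from Lemma~\ref{LE:decay-2}. The idea you are missing is precisely this symmetric factorization $\varphi'\approx\phi(x)\phi(y)$ in place of $\varphi'(x)=\phi(x)^2$.
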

\begin{proof}[Proof of Claim  \ref{claim-4}]
We have, by a direct computation $[\DD^\alpha D\chi(D),\varphi]u(x)=\int g(x-y)(\varphi(y)-\varphi(x))u(y)dy$.
To prove Claim   \ref{claim-4} we need the following two claims, proved below.
\begin{claim}\label{claim-5}
 There exists $C>0$ such that, we have
\begin{equation}
 \varphi(y)-\varphi(x)=\frac{y-x}{\langle x\rangle^r\langle y\rangle^r}+Q(x,y)
\end{equation}
where $Q(x,y)$ satistifies
\begin{equation}\label{eq1:claim-5}
 |Q(x,y)|\le C\frac{|x-y|^2}{(\langle x\rangle+\langle y\rangle)^{2r+1}} \textrm{ if }|x-y|\le \frac12(\langle x\rangle+\langle y\rangle)
\end{equation}
\begin{equation}\label{eq2:claim-5}
  |Q(x,y)|\le C+C\frac{|x-y|}{\langle x\rangle^r\langle y\rangle^r}\textrm{ if }|x-y|\ge \frac12(\langle x\rangle+\langle y\rangle)
\end{equation}
\end{claim}
We remark that if $|x-y|\le \frac12(\langle x\rangle+\langle y\rangle)$ then $\langle x\rangle\sim\langle y\rangle$ and if $|x-y|\ge \frac12(\langle x\rangle+\langle y\rangle)$ then $\langle x-y\rangle\sim |x-y|\sim \langle x\rangle+\langle y\rangle$.

\begin{claim}\label{claim-6}
 Let $Ku(x)=\int Q(x,y)g(x-y)u(y)dy$, there exists $C>0$ such that for all $u\in {\cal S}$ we have,
\begin{equation}
 |(Ku|u)|\le C \| \langle x\rangle^{-r}u\|^2
\end{equation}
\end{claim}

Following Claims \ref{claim-5} and \ref{claim-6}, we have $A_1u=\phi(h*(\phi u))+Ru$ where $|(Ru|u)|\le C\| \langle x\rangle^{-r}u\|^2$ and $h(x)=-xg(x)$. By definition of $g$ we have
\begin{equation}
 \begin{array}{lll}
 h(x)&=&{\displaystyle \frac1{2\pi}\int -xe^{ix\xi}\xi|\xi|^\alpha\chi(\xi)d\xi}\\[12pt]
&=&{\displaystyle \frac{i}{2\pi}\int\pxi(e^{ix\xi})\xi|\xi|^\alpha\chi(\xi)d\xi}\\[12pt]
&=&{\displaystyle \frac{-i}{2\pi}\int e^{ix\xi}\pxi \big( \xi|\xi|^\alpha\chi(\xi) \big) d\xi}.
 \end{array}
\end{equation}
In the last equality we use  that $\xi|\xi|^\alpha$ is a $C^1$ function, and we have $\pxi \big( \xi|\xi|^\alpha\chi(\xi) \big)= (\alpha+1)|\xi|^\alpha\chi(\xi)+\xi|\xi|^\alpha\chi'(\xi) $. Then we have $h(x)=h_1(x)+h_2(x)$ where $\hat h_1(\xi)=-i(\alpha+1)|\xi|^\alpha\chi(\xi)$ and $\hat h_2(\xi)=-i\xi|\xi|^\alpha\chi'(\xi)$. We have $\phi(h_1*(\phi u))=-i(\alpha+1)(\phi\DD^\alpha\chi(D)\phi u)(x)$. This term gives the first term of the right hand side of \eqref{eq:claim4}. We have $\phi(h_2*(\phi u))=(\phi D\DD^\alpha\chi'(D)\phi u)(x)$ and by \eqref{Composition}, $D\DD^\alpha\chi'(D)\phi$ is an operator with symbol in $S^{0,-r}$ (we recall $\chi'$ is supported in $1\le |\xi|\le 2$), we have by \eqref{ineq-L2}, $|(D\DD^\alpha\chi'(D)\phi u|\phi u)|\le C \| \langle x\rangle^{-r}u\|^2$. This proves Claim \ref{claim-4}.
\end{proof}

\begin{proof}[Proof of Claim  \ref{claim-5}]
By definition $\varphi $ is bounded then \eqref{eq2:claim-5} is obvious. We have $\varphi(y)-\varphi(x)=\int_x^y\frac1{\langle s\rangle^{2r}}ds$ then $Q(x,y)=\int_x^y\left( \frac1{\langle s\rangle^{2r}}-\frac1{\langle x\rangle^{r}\langle y\rangle^{r}}\right) ds$. We have 
\begin{equation}
\frac1{\langle s\rangle^{2r}}-\frac1{\langle x\rangle^{r}\langle y\rangle^{r}}=\frac1{\langle s\rangle^{r}}\left(  \frac1{\langle s\rangle^{r}}-\frac1{\langle x\rangle^{r}} \right) +\frac1{\langle x\rangle^{r}}\left(  \frac1{\langle s\rangle^{r}}-\frac1{\langle y\rangle^{r}} \right)
\end{equation}
We have $\langle s\rangle\le \langle x\rangle+\langle y\rangle$ because $s\in[x,y]$, and $\langle s\rangle\ge \inf(\langle x\rangle,\langle y\rangle)\sim \langle x\rangle\sim\langle y\rangle\sim \langle x\rangle+\langle y\rangle$ if $|x-y|\le \frac12(\langle x\rangle+\langle y\rangle)$. To prove  \eqref{eq1:claim-5}, it is sufficient to prove, 
\begin{equation}\label{eq3:claim-5}
\left|\frac1{ \langle s\rangle^{r}}-\frac1{\langle x\rangle^{r}}\right| \le C\frac{|s-x|}{\langle x\rangle^{r+1}}
. \end{equation}
Writing $\frac1{ \langle s\rangle^{r}}-\frac1{\langle x\rangle^{r}}=\int_x^s\psi(t)dt$ where $\psi(t)=\partial_t(\frac1{ \langle t\rangle^{r}})$, we have $|\psi(t)|\le C\frac1{ \langle t\rangle^{r+1}}$, this gives \eqref{eq3:claim-5}.
\end{proof}

\begin{proof}[Proof of Claim  \ref{claim-6}]
Writing $((Ku)(x)|u(x))=(\langle x\rangle^rK(\langle y\rangle^r\langle y\rangle ^{-r}u)(x) |\langle x\rangle^{-r}u(x))$, it is sufficient to prove that $\langle x\rangle^rK(\langle y\rangle^rv)(x)$ defines a bounded operator on $L^2$. The kernel of this operator is $H(x,y)=\langle x\rangle^r\langle y\rangle^rQ(x,y)g(x-y)=H_1(x,y)+H_2(x,y)$, where $H_1$ and $H_2$ are $H$ restricted respectively to the regions $|x-y|\le \frac12(\langle x\rangle +\langle y\rangle)$ and $|x-y|\ge \frac12(\langle x\rangle +\langle y\rangle)$. Following Lemma \ref{LE:decay-2}, we have $|g(x-y)|\le \frac{C}{\langle x-y\rangle^{\alpha+2}}$.

From Claim \ref{claim-5} we have
\begin{equation}\label{eq1:claim6}
 \begin{array}{lll}
 |H_1(x,y)|&\le& C\dfrac{\langle x\rangle^r\langle y\rangle^r|x-y|^2}{\langle x-y\rangle^{\alpha+2}(\langle x\rangle+\langle y\rangle)^{2r+1}}\\[12pt]
&\le& \dfrac{C}{\langle x-y\rangle^{\alpha}(\langle x\rangle+\langle y\rangle)}\\[12pt]
&\le& \dfrac{C}{\langle x-y\rangle^{\alpha+1}}
\end{array}
\end{equation}
and 
\begin{equation}\label{eq2:claim6}
  \begin{array}{lll}
   |H_2(x,y)|&\le& C\dfrac{\langle x\rangle^r\langle y\rangle^r}{\langle x-y\rangle^{\alpha+2}}\left(C+\dfrac{C|x-y|}{\langle x\rangle^r\langle y\rangle^r} \right) \\[12pt]
&\le &  C\dfrac{\langle x\rangle^r\langle y\rangle^r}{\langle x-y\rangle^{\alpha+2}}+\dfrac{C}{\langle x-y\rangle^{\alpha+1}}=H_3(x,y)+H_4(x,y)
  \end{array}
\end{equation}

We claim $\int H_3(x,y) dy \leq C$ (and  by symmetry $\int H_3(x,y) dx \leq C$). Indeed,
\begin{align*}
\int H_3(x,y) dy & \leq \int_{|y|<|x|} H_3(x,y) dy + \int_{|y|>|x|} H_3(x,y) dy \\
& \leq C \langle x\rangle^{r-(\alpha +2)} \int_{|y|<|x|} \langle y\rangle^r dy
+ C \langle x\rangle^r \int_{|y|>|x|} \langle y\rangle^{r-(\alpha +2)} dy
\leq C  \langle x\rangle^{2 r - (\alpha +1)} \leq C.
\end{align*}
The same estimate is trivially true for $H_1$ and $H_4$.
Thus, by   Schur's lemma, the operator with kernel $H$ is bounded on $L^2$.
\end{proof}

\begin{claim}\label{claim-7}
 Let $A_3=i \big[[\DD^\alpha\chi(D),\phi],\phi\big]$, there exists $C>0$ such that for all $u\in {\cal S}$ we have,
\begin{equation}
  |(A_3u|u)|\le C \| \langle x\rangle^{-r}u\|^2
\end{equation}
\end{claim}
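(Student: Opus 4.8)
The plan is to compute the Schwartz kernel of $A_3$ explicitly and then bound the associated operator by Schur's lemma, exactly in the spirit of the proof of Claim~\ref{claim-6}.

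First I would note that $\DD^\alpha\chi(D)$ is the operator of convolution with $g_0$, where $\widehat{g_0}(\xi)=|\xi|^\alpha\chi(\xi)$ (a real, even function), and that commuting a convolution operator with multiplication by $\phi$ simply multiplies its kernel by $\phi(y)-\phi(x)$. Iterating this twice gives
\[
A_3 u(x)=i\int g_0(x-y)\,(\phi(y)-\phi(x))^2\,u(y)\,dy .
\]
By Lemma~\ref{LE:decay-2} applied to $\widehat{g_0}(\xi)=|\xi|^\alpha\chi(\xi)$ one has the decay $|g_0(z)|\le C\langle z\rangle^{-(\alpha+1)}$. Since $\langle x\rangle^r\cdot\langle x\rangle^{-r}=1$, we may write $(A_3u|u)=(\langle x\rangle^rA_3u\,|\,\langle x\rangle^{-r}u)$, and by Cauchy--Schwarz it suffices to prove that the operator with kernel $H(x,y)=\langle x\rangle^r\langle y\rangle^r|g_0(x-y)|(\phi(y)-\phi(x))^2$ is bounded on $L^2$. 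As $\phi=\langle\cdot\rangle^{-r}$ and $g_0$ is even, $H$ is symmetric in $(x,y)$, so by Schur's lemma it is enough to check $\sup_x\int H(x,y)\,dy\le C$.

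Then I would split the integral at $|x-y|=\tfrac12(\langle x\rangle+\langle y\rangle)$, as in Claim~\ref{claim-5}. On the region $|x-y|\le\tfrac12(\langle x\rangle+\langle y\rangle)$ one has $\langle x\rangle\sim\langle y\rangle\sim\langle x\rangle+\langle y\rangle$ and, by the estimate \eqref{eq3:claim-5} obtained in the proof of Claim~\ref{claim-5}, $(\phi(y)-\phi(x))^2\le C|x-y|^2(\langle x\rangle+\langle y\rangle)^{-2r-2}$; combined with $|g_0(x-y)|\le C\langle x-y\rangle^{-(\alpha+1)}$ and $|x-y|^2\le\langle x-y\rangle^2$ this yields $H(x,y)\le C\langle x-y\rangle^{1-\alpha}\langle x\rangle^{-2}$, and since $|x-y|\lesssim\langle x\rangle$ there, $\int H\,dy\le C\langle x\rangle^{-\alpha}\log\langle x\rangle\le C$ for all $\alpha\in[1,2]$. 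On the region $|x-y|\ge\tfrac12(\langle x\rangle+\langle y\rangle)$ one has $\langle x\rangle+\langle y\rangle\sim\langle x-y\rangle$ and, using only $0<\phi\le1$, $(\phi(y)-\phi(x))^2\le\langle x\rangle^{-2r}+\langle y\rangle^{-2r}$, hence
\[
H(x,y)\le C\langle x-y\rangle^{-(\alpha+1)}\bigl(\langle x\rangle^{-r}\langle y\rangle^{r}+\langle x\rangle^{r}\langle y\rangle^{-r}\bigr).
\]
In the first term I bound $\langle y\rangle\le C\langle x-y\rangle$ (valid on this region), reducing its $y$-integral to $C\langle x\rangle^{-r}\int\langle x-y\rangle^{r-(\alpha+1)}\,dy$; in the second term I use the standard weighted-convolution estimate $\langle\cdot\rangle^{-(\alpha+1)}*\langle\cdot\rangle^{-r}\le C\langle\cdot\rangle^{-r}$ (valid since $\alpha+1>1$), which gives a contribution $\le C\langle x\rangle^{r}\cdot\langle x\rangle^{-r}=C$. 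Adding the two regions gives $\sup_x\int H\,dy\le C$, and Schur's lemma finishes the proof.

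The one place where the hypotheses genuinely enter — and the only mild difficulty — is the far region: the convergence of $\int\langle x-y\rangle^{r-(\alpha+1)}\,dy$ requires $r<\alpha$, which is exactly ensured by the assumption $r\le\tfrac12(\alpha+1)$ together with $\alpha>1$ (the borderline case $\alpha=1$ being covered separately in \cite{KM}). Everything else is a routine bookkeeping of weighted convolutions, so no further obstacle is expected.
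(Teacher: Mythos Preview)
Your proof is correct and follows essentially the same route as the paper: compute the double-commutator kernel as $h(x-y)(\phi(x)-\phi(y))^2$ with $\widehat h(\xi)=|\xi|^\alpha\chi(\xi)$, invoke Lemma~\ref{LE:decay-2} for $|h|\le C\langle\cdot\rangle^{-(\alpha+1)}$, conjugate by $\langle x\rangle^r$, split at $|x-y|=\tfrac12(\langle x\rangle+\langle y\rangle)$, and conclude by a Schur-type bound. The near-region estimate is identical to the paper's (your appeal to \eqref{eq3:claim-5} is exactly the content of the paper's Claim~\ref{claim-8}).

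The only substantive difference is in the far region. You stay with ordinary Schur, bounding $\langle y\rangle^r\le C\langle x-y\rangle^r$ and then needing $\int\langle x-y\rangle^{r-(\alpha+1)}\,dy<\infty$, i.e.\ $r<\alpha$; this is why you must exclude $\alpha=1$. The paper instead keeps the asymmetric term $R_3=\langle x\rangle^r\langle x-y\rangle^{-(\alpha+1)}\langle y\rangle^{-r}$ as is and applies a weighted Schur test (Theorem~5.2 in \cite{HS}) with weights $\langle x\rangle^{-1/2}$ and $\langle y\rangle^{-r}$, which goes through for the full range $r\le\tfrac12(\alpha+1)$ including $\alpha=1$. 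Your approach is slightly more elementary; the paper's buys the endpoint $\alpha=1$ without deferring to \cite{KM}.
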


\begin{proof}[Proof of Claim  \ref{claim-7}]
We set $h(x)=\frac1{2\pi}\int e^{ix\xi}|\xi|^\alpha\chi(\xi)d\xi$. Following Lemma \ref{LE:decay-2}, there exists $C>0$ such that $| h(x)|\le\frac{C}{\langle x\rangle^{\alpha+1}}$. We have $\big[[\DD^\alpha\chi(D),\phi],\phi\big]u=\int h(x-y)\big(\phi(x)-\phi(y)\big)^2u(y)dy$. We need the following Claim to continue.

\begin{claim}\label{claim-8}
 There exists $C>0$ such that
\begin{align*}
 |\phi(x)-\phi(y)|\le C\frac{|x-y|}{(\langle x\rangle+\langle y\rangle)^{r+1}}  \textrm{ if }|x-y|\le \frac12(\langle x\rangle+\langle y\rangle)\\
|\phi(x)-\phi(y)|\le \frac1{\langle x\rangle^r}+\frac1{\langle y\rangle^r}\textrm{ if }|x-y|\ge \frac12(\langle x\rangle+\langle y\rangle)
\end{align*}
\end{claim}
\begin{proof}[Proof of Claim  \ref{claim-8}]
We have $\phi(x)-\phi(y)=\int_y^x\zeta(s)ds $ where $\zeta=\partial_s\left( \frac{1}{\langle s \rangle^r}\right) $  and we have $|\zeta(s)|\le \frac{C}{\langle s \rangle^{r+1}}$. If $|x-y|\le \frac12(\langle x\rangle+\langle y\rangle)$, we have $\langle s \rangle\sim\langle x \rangle\sim \langle y \rangle\sim \langle x \rangle +\langle y \rangle$, this gives the first inequality. The second one is obvious.
\end{proof}

 We argue as in the proof of Claim \ref{claim-6}.  Let $R(x,y)=\langle x\rangle^rh(x-y)\big(\phi(x)-\phi(y)\big)^2\langle y\rangle^r=R_1(x,y)+R_2(x,y)$ where $R_1$ and $R_2$ are $R$ restricted respectively to the regions $|x-y|\le \frac12(\langle x\rangle+\langle y\rangle)$ and $|x-y|\ge \frac12(\langle x\rangle+\langle y\rangle)$. It is sufficient to prove that $R$ defines an bounded operator on $L^2$.

 We have
\begin{equation}
 \begin{array}{lll}
 |R_1(x,y)|&\le& C\dfrac{\langle x\rangle^r\langle y\rangle^r|x-y|^2}{\langle x-y\rangle^{\alpha+1}(\langle x\rangle+\langle y\rangle)^{2r+2}}\\[12pt]
&\le&\dfrac{C}{\langle x-y\rangle^{\alpha+1}}
\end{array}
\end{equation}
And  
\begin{equation}
 \begin{array}{lll}
  |R_2(x,y)|&\le& C\dfrac{\langle x\rangle^r\langle y\rangle^r}{\langle x-y\rangle^{\alpha+1}} \left(    \dfrac{1}{\langle x\rangle^{2r}}+ \dfrac{1}{\langle y\rangle^{2r}} \right) \\[12pt]
&\le&   \dfrac{C\langle x\rangle^r}{\langle x-y\rangle^{\alpha+1}\langle y\rangle^r}+\dfrac{C\langle y\rangle^r}{\langle x-y\rangle^{\alpha+1}\langle x\rangle^r} =R_3(x,y)+R_4(x,y)\\[12pt]
 \end{array}
\end{equation}
By symmetry, it is now sufficient to prove that $R_3$ defines a bounded operator on $L^2$. We have 
\begin{equation}
 \int R_3(x,y)\langle x\rangle^{-\frac12}dx\le C\langle y\rangle ^{-r}\textrm{ and } \int R_3(x,y)\langle y\rangle^{-r}dy\le C\langle x\rangle ^{-\frac12}
\end{equation}
if $r\le \frac{\alpha +1}{2}$. Using a variant of Schur's lemma (see e.g. Theorem 5.2 in \cite{HS}), the operator with kernel $R(x,y)$ is bounded on $L^2$.
\end{proof}

By \eqref{eq1:claim2}, the claims \ref{claim-4} and \ref{claim-7} we obtain $i(T_2u|u)=(\alpha-1)(\phi\DD^\alpha\chi(D)\phi u|u)+R$ where $R $ sastifies the required estimates to prove Claim \ref{claim-2}.
\end{proof}

  Lemma \ref{lemma2} follows from the Claims \ref{claim-1} and \ref{claim-2}.
\end{proof}
\begin{proof}[Proof of Lemma \ref{lemma1}]
 We have 
\begin{equation}
 \int (-\DD^\alpha u)u\varphi'dx=(-\phi^2\DD^\alpha u|u)=(-\phi\DD^\alpha\phi u|u)-(\phi[\phi,\DD^\alpha]u|u)
\end{equation}
As the left hand side is real, we can take the real part of the last term and we have,
\begin{equation}
\begin{array}{lll}
 2\re (\phi[\phi,\DD^\alpha]u|u)&=&(\phi[\phi,\DD^\alpha]u|u)+(u|\phi [\phi,\DD^\alpha]u)\\
&=&(\phi[\phi,\DD^\alpha]u|u)-([\phi,\DD^\alpha]\phi u|u)=(\big[\phi,[\phi,\DD^\alpha]\big]u|u)
\end{array}
\end{equation}
By pseudodifferential calculus \eqref{Commutator}, the symbol of $\big[\phi,[\phi,\DD^\alpha(1-\chi(D))]\big]$ is in $S^{\alpha-2,-2r-2}\subset S^{0,-2r}$ and then it satisfies 
\begin{equation}
| (\big[\phi,[\phi,\DD^\alpha(1-\chi(D))]\big]u|u)|\le C\| \langle x\rangle^{-r}u\|^2
\end{equation}

The term $(\big[\phi,[\phi,\DD^\alpha\chi(D)]\big]u|u)\le C\| \langle x\rangle^{-r}u\|^2$ by Claim \ref{claim-7}. This proves that 
\begin{equation}
  \int (-\DD^\alpha u)u\varphi'dx\le -\|\DD^\frac{\alpha}{2}(\phi u)\|^2 +C\| \langle x\rangle^{-r}u\|^2,
\end{equation}
and completes the proof of Lemma \ref{lemma1}.
\end{proof}

\subsection{Monotonicity result on $\eta(t)$}

For future use, we also state a monotonicity result for $\eta(t)$, restricted to the regular regime, i.e. the situation where the solution stays close to a fixed soliton.

\begin{proposition}\label{PR:monoeta}
Let $r\in (\frac 12 ,  \frac 12 (\alpha+1)]$ and $0<\mu<1$.
	Under the assumptions of Lemma \ref{MODULATION}, 
	with the restriction $\lambda_0(t)=1$,
for 	$\epsilon_0=\epsilon_0(\mu,r)$ small enough and $A=A(\mu,r)$ large enough, there exists $C=C(\mu,r,A)>0$ such that for all $x_0>1$, 
  	\begin{equation}\label{monotonicity3}\begin{split}
	&\int \eta^2(s_2,y)\left[\varphi_A(\lambda^{\frac 2 \alpha}(s_2) y-x_0)
	-\varphi_A(-x_0)\right] dy
	 \\&  \leq \int \eta^2(s_1,y)\left[\varphi_A(\lambda^{\frac 2 \alpha}(s_1) y-x_0 -\mu (s_2-s_1))  
	  -  \varphi_A(-x_0 -\mu (s_2-s_1)  )\right] dx\\ & +C \int_{s_1}^{s_2} \frac {\|\eta(s)\|_{L^2}^2}  {(x_0 + \mu(s_2-s))^{2r}} ds.
	\end{split}\end{equation}  
 \end{proposition}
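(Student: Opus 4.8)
The plan is to reproduce the scheme of Proposition \ref{PR:2}, but now for a weighted quantity built on $\eta$ and its evolution equation from Lemma \ref{MODULATION}. For $s\in[s_1,s_2]$ introduce the moving weight
\[
\psi(s,y)=\varphi_A\!\Bigl(\lambda^{\frac2\alpha}(s)\,y-x_0-\mu(s_2-s)\Bigr)-\varphi_A\!\bigl(-x_0-\mu(s_2-s)\bigr),\qquad N(s)=\tfrac12\int \eta^2(s,y)\,\psi(s,y)\,dy ,
\]
so that $N(s_1)$, $N(s_2)$ are exactly the two members of \eqref{monotonicity3} without the error term, $\psi(s,0)=0$, and $\psi_y(s,y)=\lambda^{\frac2\alpha}(s)\varphi_A'(\lambda^{\frac2\alpha}(s)y-x_0-\mu(s_2-s))\ge0$. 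I would first differentiate $N$ in $s$, substitute $\eta_s$ by the right-hand side of the evolution equation in Lemma \ref{MODULATION}, and split $N'(s)$ into: (a) the nonlocal contribution $\int\partial_y(\DD^\alpha\eta)\,\eta\,\psi\,dy$; (b) the remaining linear contribution $\int(\partial_y\eta-\partial_y(Q^{2\alpha}\eta))\eta\,\psi\,dy$; (c) the weight-time derivative $\tfrac12\int\eta^2\partial_s\psi\,dy$; (d) the four modulation forcing terms; (e) the nonlinear term $\int\mathcal R(\eta)\,\partial_y(\eta\psi)\,dy$.

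\textbf{The main term, the potential, and the sign.} Term (a) is the heart of the matter and is already available: after the change of variable $y\mapsto\lambda^{-\frac2\alpha}(s)(y'+x_0+\mu(s_2-s))$, which costs only factors bounded above and below since $\lambda(s)=1+O(\epsilon)$ on $[s_1,s_2]$ by \eqref{ortho3} (recall $\lambda_0\equiv1$), the weight becomes a translate of $\varphi_A$ and Lemmas \ref{lemma2} and \ref{lemma1} apply verbatim, giving
\[
\int\partial_y(\DD^\alpha\eta)\,\eta\,\psi\,dy\le-c_0\,\bigl\|\DD^{\frac\alpha2}(\eta\sqrt{\psi_y})\bigr\|_{L^2}^2+\frac{C}{A^\alpha}\int\eta^2\psi_y\,dy ,\qquad c_0>0 .
\]
In (b), the $\partial_y\eta$ piece equals $-\tfrac12\int\eta^2\psi_y\,dy$ (nonpositive), while the $Q^{2\alpha}$ piece, after one integration by parts, is $-\tfrac12\int\partial_y(Q^{2\alpha}\psi)\eta^2+\int Q^{2\alpha}\eta^2\psi_y$; using the decay \eqref{eq:decay} of $Q$, the bound $|\psi(s,y)|\le C(1+|y|)\langle x_0+\mu(s_2-s)\rangle^{-2r}$ and $\psi_y(s,y)\le C\langle x_0+\mu(s_2-s)\rangle^{-2r}$ valid wherever $\lambda^{\frac2\alpha}y\le\tfrac12(x_0+\mu(s_2-s))$, and, beyond that range, $Q^{2\alpha}(y)\le C\langle x_0+\mu(s_2-s)\rangle^{-2r}$ since $2r\le1+\alpha$, this piece is $\le C\langle x_0+\mu(s_2-s)\rangle^{-2r}\|\eta(s)\|_{L^2}^2$. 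For (c), writing $\lambda^{\frac2\alpha}y=z+c$ with $c=x_0+\mu(s_2-s)$ and $z=\lambda^{\frac2\alpha}(s)y-c$,
\[
\partial_s\psi=\frac{\mu}{\lambda^{2/\alpha}(s)}\psi_y+\frac2\alpha\frac{\lambda_s}{\lambda}\,z\,\varphi_A'(z)+\frac2\alpha\frac{\lambda_s}{\lambda}\,c\,\varphi_A'(z)-\mu\,\varphi_A'(-c) ;
\]
the first term gives $+\tfrac{\mu}{2\lambda^{2/\alpha}}\int\eta^2\psi_y$, the last (constant in $y$) term is $\le C\langle x_0+\mu(s_2-s)\rangle^{-2r}\|\eta\|_{L^2}^2$, and the two middle terms are handled using $|\lambda_s/\lambda|\le C\|\eta\|_{L^2}$ from \eqref{ortho2}, the fact that $z\varphi_A'(z)$ is bounded and that $c\,\varphi_A'(z)$ is bounded by $C\langle x_0+\mu(s_2-s)\rangle^{-2r}$ on the support one is reduced to, together with $\|\eta\|_{L^2}\le C\epsilon$, exactly as the analogous contributions are absorbed in \cite{MMjmpa}. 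Collecting the $\int\eta^2\psi_y$ contributions of (a), (b), (c), the total coefficient is $-\tfrac12+\tfrac{\mu}{2\lambda^{2/\alpha}}+\tfrac{C}{A^\alpha}=-\tfrac{1-\mu}{2}+O(\epsilon)+O(A^{-\alpha})<0$ for $\mu<1$, $A$ large and $\epsilon_0$ small.

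\textbf{Forcing and nonlinear terms; conclusion.} For (d): $\tfrac{\lambda_s}{\lambda}\int\Lambda Q\,\eta\,\psi$ and $(\tfrac{\rho_s}{\lambda^{2/\alpha}}-1)\int Q'\,\eta\,\psi$ are controlled by \eqref{ortho2} and the decay \eqref{eq:decay} of $\Lambda Q$, $Q'$ (which confines the integrand to $|y|\lesssim x_0+\mu(s_2-s)$, where $|\psi|\lesssim|y|\langle x_0+\mu(s_2-s)\rangle^{-2r}$), giving $\le C\langle x_0+\mu(s_2-s)\rangle^{-2r}\|\eta\|_{L^2}^2$; the terms $\tfrac{\lambda_s}{\lambda}\int\Lambda\eta\,\eta\,\psi$ and $(\tfrac{\rho_s}{\lambda^{2/\alpha}}-1)\int\eta_y\,\eta\,\psi$, after integrating by parts the $y\eta_y\eta=\tfrac12 y\partial_y(\eta^2)$ part of $\Lambda\eta=\tfrac1\alpha(\eta+2y\eta_y)$, are bounded by $C(|\lambda_s/\lambda|+|\rho_s/\lambda^{2/\alpha}-1|)\|\eta\|_{L^2}^2$ and absorbed for $\epsilon_0$ small, following \cite{MMjmpa}. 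For (e), writing $\mathcal R(\eta)=\alpha Q^{2\alpha-1}\eta^2+O(Q^{2\alpha-2}|\eta|^3)+O(|\eta|^{2\alpha+1})$ and integrating by parts, the $Q$-weighted part is treated as in (b) by $C\langle x_0+\mu(s_2-s)\rangle^{-2r}\|\eta\|_{L^2}^2$ up to a piece absorbed into $c_0\|\DD^{\alpha/2}(\eta\sqrt{\psi_y})\|^2$, while the pure power part carries the small factor $\|\eta\|_{L^\infty}^{2\alpha}\le C\|\eta\|_{H^{\alpha/2}}^{2\alpha}\le C\epsilon^{2\alpha}$ and is absorbed into the good negative terms, exactly as the nonlinear term is estimated in \cite{MMjmpa}. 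Gathering everything, for $A=A(\mu,r)$ large and $\epsilon_0=\epsilon_0(\mu,r)$ small one obtains
\[
\frac{dN}{ds}(s)\le C\,\frac{\|\eta(s)\|_{L^2}^2}{\bigl(x_0+\mu(s_2-s)\bigr)^{2r}} ,
\]
and integrating over $[s_1,s_2]$ yields \eqref{monotonicity3}. As in Section 4.2, the only genuinely hard point is the nonlocal operator $\DD^\alpha$, disposed of by the commutator/pseudo-differential identities of Lemmas \ref{lemma2}--\ref{lemma1} (which produce the negative quantity $\|\DD^{\alpha/2}(\eta\sqrt{\psi_y})\|_{L^2}^2$ with $\langle x_0+\mu(s_2-s)\rangle^{-2r}$-type remainders); granting that, what remains is the bookkeeping of the $s$-dependent, $\lambda$-rescaled weight, handled by the modulation estimates \eqref{ortho2}--\eqref{ortho3}.
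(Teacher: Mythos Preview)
Your overall strategy is exactly the one the paper sketches: set up the weighted quantity $M_\eta=N$, differentiate in $s$, insert the $\eta$-equation from Lemma~\ref{MODULATION}, control the nonlocal piece via Lemmas~\ref{lemma2}--\ref{lemma1}, and estimate the rest. The paper itself gives essentially no further detail, so your write-up is considerably more explicit than the paper's own proof.

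There is, however, one genuine slip in your bookkeeping of the $\lambda$-dependence. The two ``middle terms'' in (c), namely $\tfrac{2}{\alpha}\tfrac{\lambda_s}{\lambda}\,z\varphi_A'(z)$ and $\tfrac{2}{\alpha}\tfrac{\lambda_s}{\lambda}\,c\,\varphi_A'(z)$, and the term $\tfrac{\lambda_s}{\lambda}\int\Lambda\eta\,\eta\,\psi$ in (d) are \emph{not} separately under control in the way you claim. Indeed, $c\,\varphi_A'(z)$ is of order $c/A$ near $z=0$, so it is certainly not bounded by $C\langle c\rangle^{-2r}$ on any relevant region; likewise, after the integration by parts you indicate one finds
\[
\int\Lambda\eta\,\eta\,\psi\;=\;-\frac{1}{\alpha}\int y\,\psi_y\,\eta^2\;=\;-\frac{1}{\alpha}\int(z+c)\,\varphi_A'(z)\,\eta^2,
\]
which again carries an unabsorbable factor $c$ (so your bound $C|\lambda_s/\lambda|\,\|\eta\|_{L^2}^2$ is false uniformly in $c$). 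The point you are missing is that these contributions cancel \emph{exactly}: the $\lambda_s$-part of $\tfrac12\int\eta^2\partial_s\psi$ in (c) equals
\[
\frac{1}{\alpha}\,\frac{\lambda_s}{\lambda}\int \lambda^{2/\alpha}y\,\varphi_A'(z)\,\eta^2\;=\;\frac{1}{\alpha}\,\frac{\lambda_s}{\lambda}\int y\,\psi_y\,\eta^2,
\]
which is the negative of the $\Lambda\eta$ contribution in (d). This cancellation reflects the fact that both terms arise from the same $\lambda$-rescaling of $u$; once you use it, the problematic growth in $c=x_0+\mu(s_2-s)$ disappears and the remaining error terms either carry the $\langle x_0+\mu(s_2-s)\rangle^{-2r}$ weight or are absorbed into the good negative $\int\eta^2\psi_y$ term, yielding the claimed differential inequality.
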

\begin{proof}[Sketch of proof]
Using Lemmas \ref{lemma2}--\ref{lemma1}, the proof is similar to the one of Proposition 2 in \cite{KM}, the only difference being the additional scaling parameter $\lambda(s)$ (close to $1$) in the present situation.
Let 
$$
	\tilde y = \lambda^{\frac 2 \alpha}(s)y-x_0 - \mu (s_2-s),
	\quad M_{\eta}(s) = \frac 12 \int \eta^2(s) \left[\varphi_A( \tilde y)
	- \varphi_A( -x_0-\mu(s_2-s))\right].
$$
Using the equation of $\eta(s)$ (see Lemma \ref{MODULATION}), Lemmas \ref{lemma2}--\ref{lemma1} and estimates on 
$\varphi_A$, as in \cite{KM}, one finds
$$
M'_{\eta}(s)\leq  \frac {C \|\eta(s)\|_{L^2}^2} {(x_0 + \mu(s_2-s))^{2r}} ,
$$
and the result follows by integration on $[s_1,s_2]$.
\end{proof}
\section{Nonlinear Liouville property and asymptotic stability}
This section is devoted to the regular regime: we study rigidity properties of the nonlinear equation \eqref{dgBO} in a neighborhood of a soliton. In this section, $\alpha_0<\alpha<2$, where $\alpha_0$ is given by Proposition \ref{pr:close2} and $Q$ denotes the only ground state solution of \eqref{eq:Q}.
Note that we could also work with a general $1\leq \alpha <2$, assuming the linear Liouville property.

\subsection{Nonlinear Liouville property}
\begin{proposition}[Nonlinear Liouville property]\label{PR:4}
 Let $\alpha_0<\alpha<2$.
There exists $\epsilon>0$ such if $u(t)$ is a global ($t\in \RR$) solution of \eqref{dgBO} satisfying
for some $x_0(t)$,
\begin{align}
& \forall t\in \RR, \quad  \|u(t)-Q(.-x_0(t))\|_{H^{\frac{\alpha}{2}}}\leq \epsilon,\label{eq:a1}\\
& \forall \delta>0, \ \exists B>0,\ \forall t\in \RR,   \ \quad
\int_{|x|> B} |u(t,x-x_0(t))|^2 dx \leq \delta,\label{eq:a2}
\end{align}
then $u(t,x)\equiv Q_{\lambda_0}(x-x_0-\lambda_0^{-2}t)$ for some $x_0 \in \RR$ and some $\lambda_0$ close to $1$.
\end{proposition}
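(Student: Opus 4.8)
The plan is to argue by contradiction through a renormalization procedure that produces, in the limit, a nontrivial solution of the linearized flow contradicting the linear Liouville property, Proposition~\ref{pr:close2}~(iv). Suppose the statement fails: there are global solutions $u_n$ of \eqref{dgBO} and centers $x_{0,n}(t)$ for which \eqref{eq:a1}--\eqref{eq:a2} hold with $\epsilon$ replaced by $\epsilon_n\to 0$, but no $u_n$ is a soliton of the asserted form. Applying Lemma~\ref{MODULATION} (with $\lambda_0(t)\equiv 1$) I get modulation parameters $\lambda_n(s),\rho_n(s)$ and $\eta_n(s)$ with $\int\eta_n\chi_0=\int\eta_n Q'=0$, $\sup_s\|\eta_n(s)\|_{H^{\alpha/2}}\le C\epsilon_n$, the equation for $\eta_n$ of Lemma~\ref{MODULATION}, and \eqref{ortho2}. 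If $\eta_n\equiv 0$ then $u_n(t)=Q_{\lambda_n(t)}(\cdot-\rho_n(t))$ and the $\eta_n$-equation forces $\lambda_{n,s}=0$, $\rho_{n,s}=\lambda_n^{2/\alpha}$, so $u_n$ would be a soliton; hence $a_n:=\sup_{s\in\RR}\|\eta_n(s)\|_{L^2}\in(0,C\epsilon_n]$.

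First I would establish, by a virial computation on $\eta_n$ using Lemmas~\ref{lemma2}--\ref{lemma1} and the coercivity \eqref{eq:kernel} of $L$, exactly as in the proof of Proposition~\ref{pr:close2}~(iv) (the nonlinear terms being higher order and absorbable for $\epsilon_n$ small), the a priori bound $\sup_s\|\eta_n(s)\|_{H^{\alpha/2}}\le C\,a_n$ with $C$ independent of $n$; and, by Proposition~\ref{PR:monoeta} applied to $\eta_n$ -- letting $s_1\to-\infty$, using \eqref{eq:a2} to kill the past contribution, and crucially keeping the sharp bound $\|\eta_n(s)\|_{L^2}\le a_n$ in the error term rather than $C\epsilon_n$ -- the scale-free localization $\int_{|y|>y_0}\eta_n^2(s,y)\,dy\le C\,a_n^2\,y_0^{-(2r-1)}$ for all $s\in\RR$, $y_0>1$. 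Next, picking $s_n^\ast$ with $\|\eta_n(s_n^\ast)\|_{L^2}\ge a_n/2$, I would set $w_n(s,y)=a_n^{-1}\eta_n(s_n^\ast+s,y)$. Dividing the $\eta_n$-equation by $a_n$ and using \eqref{ortho2} (so $|\lambda_{n,s}/\lambda_n|+|\rho_{n,s}/\lambda_n^{2/\alpha}-1|\le C a_n$), one obtains
$$
(w_n)_s-\partial_y(L w_n)=b_n(s)\,\Lambda Q+c_n(s)\,Q'+r_n(s),
$$
with $|b_n|+|c_n|\le C$ and $r_n\to 0$ (in $C_{\mathrm{loc}}(H^{\alpha/2-1})$, say), since the remaining terms are $\tfrac{\lambda_{n,s}}{\lambda_n}\Lambda w_n$ and $(\tfrac{\rho_{n,s}}{\lambda_n^{2/\alpha}}-1)\partial_y w_n$ with coefficients $O(a_n)$, and $a_n^{-1}\partial_y\mathcal R(\eta_n)=O(a_n)$ in $H^{\alpha/2-1}_{\mathrm{loc}}$. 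Moreover $\sup_s\|w_n(s)\|_{H^{\alpha/2}}\le C$, $\int_{|y|>y_0}w_n^2(s)\le C y_0^{-(2r-1)}$ uniformly in $n,s$, $\|w_n(0)\|_{L^2}\ge 1/2$, and $\int w_n\chi_0=\int w_n Q'=0$.

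I would then pass to the limit. The uniform $H^{\alpha/2}$-bound together with the uniform $L^2$-localization gives, by a Rellich argument, strong $L^2$ convergence $w_n(s)\to\bar w(s)$ for each $s$ along a subsequence; with $b_n\rightharpoonup\bar b$, $c_n\rightharpoonup\bar c$ in $L^\infty$ weak-$\ast$, and passing to the limit in the Duhamel formula for $w_n$ (using the weak continuity of the flow, Theorem~\ref{TH3}, or well-posedness in $H^{1^-}$), I get a nontrivial $\bar w$ with $\sup_s\|\bar w(s)\|_{H^{\alpha/2}}\le C$, $\int_{|y|>y_0}\bar w^2(s)\,dy\le C y_0^{-(2r-1)}$ for \emph{all} $s\in\RR$, $\int\bar w\chi_0=\int\bar w Q'=0$, solving $\bar w_s=\partial_y(L\bar w)+\bar b(s)\Lambda Q+\bar c(s)Q'$. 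This contradicts the linear Liouville property (Proposition~\ref{pr:close2}~(iv)): the forcing in the directions $\Lambda Q$ and $Q'$ is absorbed by modulation exactly as in the proof of that proposition -- subtract $\Lambda Q\int_0^s b_n$ and a suitable multiple of $Q'$, pass to the limit, and re-introduce them so that $\bar w$ is reduced to the $L^2$-compact, forced linearized solution ruled out by Theorem~1 of \cite{yvanSIAM} and the arguments of \cite{MMjmpa}. Hence for $\epsilon$ small enough every $u$ as in the statement has $\eta\equiv 0$; thus $u(t)=Q_{\lambda(t)}(\cdot-\rho(t))$, and the $\eta$-equation with $\eta\equiv 0$ gives $\lambda_s=0$, $\rho_s=\lambda^{2/\alpha}$, i.e. $\lambda\equiv\lambda_0$ (close to $1$ by \eqref{ortho3}) and $u(t,x)=Q_{\lambda_0}(x-x_0-\lambda_0^{-2}t)$.

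The main obstacle I anticipate is preserving the \emph{uniform-in-$s$} $L^2$-compactness of $w_n$ through the renormalization and the limit: this is precisely where the scale invariance of Proposition~\ref{PR:monoeta} (i.e. using $\|\eta_n(s)\|_{L^2}\le a_n$, not $\le C\epsilon_n$, in the error) and the hypothesis \eqref{eq:a2} are indispensable, and where the modulation-induced forcing $\bar b\Lambda Q+\bar c Q'$ must be handled without destroying the localization -- the naive subtraction yields a clean equation but a non-localized limit, so one keeps the forced form, as in the proof of Proposition~\ref{pr:close2}~(iv). The secondary technical point is the a priori bound $\sup_s\|\eta_n\|_{H^{\alpha/2}}\le C\sup_s\|\eta_n\|_{L^2}$ for the nonlocal operator, but this is a direct transcription of the corresponding step for the linearized flow via Lemmas~\ref{lemma2}--\ref{lemma1}.
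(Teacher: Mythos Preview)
Your proposal is correct and follows essentially the same strategy as the paper: contradiction, modulation, renormalization by $a_n=\sup_s\|\eta_n(s)\|_{L^2}$, uniform localization via Proposition~\ref{PR:monoeta} (with the crucial use of $\|\eta_n(s)\|_{L^2}\le a_n$ in the error, exactly as you stress), and passage to a nontrivial limit of the forced linearized flow contradicting the linear Liouville property.

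The only noteworthy technical difference is in the compactness step. You invoke a global a priori bound $\sup_s\|\eta_n\|_{H^{\alpha/2}}\le C a_n$ via a virial argument and then a Rellich argument at each $s$. The paper instead proves a \emph{local smoothing} estimate $\int_0^1\!\!\int|D^{\alpha/2}(w_n\sqrt{\varphi'})|^2\le C$ (together with the uniform decay) to extract strong $L^2$ convergence of $w_n(\tau_n)$ at a \emph{single} time $\tau_n\in[0,1]$, and then propagates via well-posedness of the linearized equation to obtain $\tilde w_n(s)\rightharpoonup \tilde w(s)$ weakly in $L^2$ for all $s$ along one subsequence. Your route would also work if the virial bound is established, but note that a plain Rellich argument only gives convergence at countably many times; to get a single subsequence valid for all $s$ you still need the evolution (which you do mention parenthetically). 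The paper's local-smoothing route avoids having to upgrade the $H^{\alpha/2}$ bound from $C\epsilon_n$ to $Ca_n$. Either way, the handling of the forcing $\bar b\,\Lambda Q+\bar c\,Q'$ by subtraction and re-introduction, and the final contradiction via \cite{yvanSIAM}, are identical to the paper.
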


\begin{proof}
The proof of Proposition \ref{PR:4} is by contradiction. Assume that there exists a sequence $u_n(t)$ of global
$H^{\frac \alpha 2}$ solutions of \eqref{dgBO} close to a translation of $Q$ for all time and such that their decomposition parameters $\eta_n(t)$, $\lambda_n(t)$, $\rho_n(t)$ given by Lemma \ref{MODULATION} satisfy
\begin{align}
& \sup_{s\in \RR} \left(|\lambda_n(s)-1|+ \|\eta_n(s)\|_{H^{\frac \alpha  2}} \right)
\to 0\quad \text{as $n\to +\infty$},\label{eq:st1}\\
& \eta_n \not \equiv 0, \label{eq:st2}\\
& \forall n, \ \forall \delta,\ \exists B_{n,\delta}>0, \ \forall t\in \RR,\ \quad
\int_{|x|> B_{n,\delta}} |u_n(t,x+\rho_n(t))|^2 dx \leq \delta.\label{eq:st3}
\end{align}
We follow the strategy of \cite{KM}, proof of Theorem 2. Define
$
	0\not \equiv b_n = \sup_{s\in \RR} \|\eta_n(s)\|_{L^2}$, $b_n \to 0$ as $n\to +\infty$.
Then, there exists $s_n$ such that $\|\eta_n(s_n)\|_{L^2} \geq \frac 12 b_n$. We set
$$
	w_n(s,y) = \frac {\eta_n(s_n+s,y)}{b_n},
$$
and we claim the following convergence result for the sequence $(w_n)$.
\begin{lemma}\label{PR:5}
	There exists a subsequence of $(w_n)$, denoted $(w_{n'})$ and $w\in C(\RR,L^2(\RR))\cap L^{\infty}(\RR,L^2(\RR))$ such that
	$$
		\forall s\in \RR,\quad w_{n'}(s) \rightharpoonup w(s) \quad \text{in $L^2(\RR)$ weak as $n\to +\infty$.}
	$$
	Moreover, $w(s)$  satisfies for some continuous functions $\beta(s)$, $\gamma(s)$,
	\begin{align*}
		& w_s = (L w)_y + \beta(s) Q' + \gamma(s) \Lambda Q \quad  \text{on $\RR\times \RR$},\\
		& w\neq 0,\quad \int \chi_0 w = \int Q'w =0,\\
		& \forall s\in \RR,\forall y_0>1,\quad \int_{|y|>y_0} w^2(s,y) dy \leq \frac C{y_0^{{\alpha}}},
	\end{align*}
\end{lemma}
\begin{proof}[Sketch of the proof of Lemma \ref{PR:5}]
We proceed as in \cite{KM}, proof of Proposition 5.

\noindent\emph{Decay estimate.} From Proposition \ref{PR:monoeta} (with $r=\frac{\alpha+1}{2}$,
$s_2=s$ and $s_1\to -\infty$) and \eqref{eq:st3}, it follows that   
\begin{equation}\label{eq:dec10}
	\forall y_0>1,\forall s \in \RR,\quad \int_{|y|>y_0} \eta_n^2(s,y)dy\leq \frac {C b_n^2}{y_0^{\alpha}},\quad
	\int_{|y|>y_0} w_n^2(s,y) dy \leq \frac C{y_0^{\alpha}}.
\end{equation}

\noindent\emph{Local smoothing estimate.} As in \cite{KM}, we obtain using the equation of $w_n(s)$
\begin{equation}\label{eq:lse}
	\int_0^1 \int |D^{\frac \alpha 2}( w_n(s,y) \sqrt{\varphi'(y)}|^2 dy ds \leq C.
\end{equation}

\noindent\emph{Compactness in $L^2$.} Following \eqref{eq:dec10} and \eqref{eq:lse}, there exists $\tau_n\in [0,1]$ and a subsequence of $(w_n)$  
still denoted by $(w_n)$, $s_0\in [0,1]$  and $w_{s_0}\in L^2$ such that
$$
	w_n(\tau_n) \to w_{s_0} \quad \text{in $L^2$},
	\quad \tau_n \to s_0 \quad \text{as $n\to +\infty$.}
$$
Moreover,
$
	\int w_{s_0} Q' = \int w_{s_0} \chi_0 = 0.
$

Next, note that
\begin{align*}
	w_{ns} & = \partial_y(L w_n) - \partial_y \left(\frac 1{b_n} \mathcal{R}(b_n w_n)\right)
	+\frac 1{b_n} \frac {\lambda_{ns}}{\lambda_n} (\Lambda Q + b_n \Lambda w_n)
	+ \frac 1{b_n} \left(\frac {\rho_{ns}}{\lambda_n^{\frac 2 \alpha}} -1\right) \partial_y(Q + b_n w_n)\\
	&= \partial_y(L w_n) - \partial_y \left(\frac 1{b_n} \mathcal{R}(b_n w_n)\right)
	+\beta_n Q' + \gamma_n \Lambda Q + b_n F_n' + b_n G_n + b_n \tilde \beta_n w_{ny} 
	+ b_n \tilde \gamma_n \Lambda w_n,
\end{align*}
where
\begin{align*}
	&	\beta_n 	= \frac 1 {\int (Q')^2} \int w_n L(Q''),\quad
	\tilde \beta_n  = \frac 1 {b_n} \left(\frac {\rho_{ns}}{\lambda_n^{\frac 2 \alpha}} -1\right),
	\quad F_n = \frac 1{b_n} (\tilde \beta_n - \beta_n) Q,\\
	& 	\gamma_n	= \frac 1{\int \Lambda Q \chi_0} \int w_n L(\chi_0'),\quad
	\tilde \gamma_n = \frac 1{b_n} \frac {\lambda_{ns}}{\lambda_n},\quad
	G_n = \frac 1{b_n} (\tilde \gamma_n - \gamma_n) \Lambda Q.
\end{align*}
Set
$$
\tilde w_n(s) = w_n(s) - \Lambda Q \int_{\tau_n}^s \gamma_n(s') ds'
- Q' \int_{\tau_n}^s \left(\beta_n(s') + 2 \int_{\tau_n}^{s'} \gamma_n(s'') ds''\right) ds',
$$
then
$$
\tilde w_{ns}  = \partial_y(L \tilde w_n) - \partial_y \left(\frac 1{b_n} \mathcal{R}(b_n w_n)\right)
	 + b_n F_n' + b_n G_n + b_n \tilde \beta_n w_{ny} 
	+ b_n \tilde \gamma_n \Lambda w_n,
$$
Consider $\tilde w(s,y)$ the unique global solution of 
$$
	\tilde w_{s}  = \partial_y(L \tilde w) \quad \text{on $\RR\times \RR$},\quad
	\tilde w(s_0)=w_{s_0} \quad \text{on $\RR$}.
$$
Then (see proof of Lemma 9 in \cite{KM}), we have
$$
\forall s\in \RR,\quad 	\tilde w_n(s) \rightharpoonup \tilde w(s) \quad \text{in $L^2$ weak.}
$$
Finally, Lemma \ref{PR:5} is proved with
$$
	w(s,y)= \tilde w(s,y) + \Lambda Q \int_{s_0}^s   \gamma(s') ds'
+ Q' \int_{s_0}^s \left(  \beta(s') + 2 \int_{s_0}^{s'}   \gamma(s'') ds''\right) ds'
$$
where
$$
 \gamma(s)= \frac 1{\int \Lambda Q \chi_0} \int \tilde w L(\chi_0'),\quad
 \beta(s)= \frac 1 {\int (Q')^2} \int \left(\tilde w + \Lambda Q \int_{s_0}^s \gamma(s') ds'\right) L(Q'').
$$
\end{proof}

We finish the proof of Proposition \ref{PR:4} by observing that  the function $w(s,y)$   constructed
in Lemma \ref{PR:5} contradicts the linear Liouville property, thus reaching the desired contradiction.
Indeed, using the strategy of the proof of Corollary 1 in \cite{yvanSIAM}, we obtain
$$
	w(s,y) = a(t) \Lambda Q + b(t) Q'.
$$
But since $\int w\chi_0=\int w Q' = 0$, we obtain $a(t)=b(t)\equiv 0$ and thus $w\equiv 0$, which
is a contradiction.
\end{proof}

\subsection{Asymptotic stability in the bounded regime}

The next proposition is not used in the proof of Theorem \ref{TH2} but it is stated as a consequence of Proposition \ref{PR:4} and the monotonicity arguments of Section 4.

 \begin{proposition}[Asymptotic stability]\label{PR:6}
Assume $\alpha_0<\alpha\le 2$.
There exists $\epsilon>0$ such if $u(t)$ is a global ($t\in \RR$) solution of \eqref{dgBO} satisfying
\begin{equation}\label{eq:89}
 \forall t\in \RR, \quad  \inf_{x_0\in \RR}\|u(t)-Q(.-x_0)\|_{H^\frac{\alpha}{2}}\leq \epsilon,
\end{equation}
then there exist $\lambda(t)>0$, $\rho(t)\in \RR$ such that
$$
	\eta(t,y)= \lambda^{\frac 1\alpha}(t) u\left(t,\lambda^{\frac 2 \alpha}(t) y+\rho(t)\right)-Q(y)
$$
satisfies
$$
	\eta(t)\rightharpoonup 0 \quad \text{in $H^{\frac \alpha 2}$ as $t\to +\infty$.}
$$

\end{proposition}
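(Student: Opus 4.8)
The plan is to combine the nonlinear Liouville property (Proposition~\ref{PR:4}), the monotonicity estimates of Section~4, and the weak continuity of the flow (Theorem~\ref{TH3}) in a compactness argument, in the spirit of the asymptotic stability proof of \cite{KM}.

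First I would apply Lemma~\ref{MODULATION} to $u(t)$ (legitimate for $\epsilon$ small, since \eqref{eq:89} furnishes for each $t$ a translate $Q(\cdot-x_0(t))$ within $\epsilon$, i.e.\ $\lambda_0(t)=1$), obtaining $C^1$ parameters $\lambda(t)\in(1-C\epsilon,1+C\epsilon)$ (so $\lambda(t)\le 2$ and \eqref{eq:surL} holds), $\rho(t)$ with $\rho_t$ bounded above and below, and $\eta(t)$ with $\|\eta(t)\|_{H^{\frac\alpha2}}\le C\epsilon$, $\int\eta(t)Q'=\int\eta(t)\chi_0=0$. Since $(\eta(t))_{t\ge0}$ is bounded in $H^{\frac\alpha2}$, it suffices to show that every weak limit of $\eta(t_n)$ along a sequence $t_n\to+\infty$ vanishes. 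Arguing by contradiction, suppose $t_n\to+\infty$ with $\eta(t_n)\rightharpoonup\eta^\ast\not\equiv0$ in $H^{\frac\alpha2}$, and (along a subsequence) $\lambda(t_n)\to\lambda_\infty$, $|\lambda_\infty-1|\le C\epsilon$. Set $\tilde u_n(\tau,x)=u(t_n+\tau,x+\rho(t_n))$, a global $H^{\frac\alpha2}$ solution of \eqref{dgBO} bounded by $A:=\|Q\|_{H^{\frac\alpha2}}+\epsilon$. From $u(t,x+\rho(t))=\lambda^{-1/\alpha}(t)(Q+\eta(t))(\lambda^{-2/\alpha}(t)x)$ we get $\tilde u_n(0)\rightharpoonup U_0:=\lambda_\infty^{-1/\alpha}(Q+\eta^\ast)(\lambda_\infty^{-2/\alpha}\cdot)$ in $H^{\frac\alpha2}$. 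By Theorem~\ref{TH3} applied iteratively on intervals $[kT(A),(k+1)T(A)]$ (and on negative times, using the reversibility $(t,x)\mapsto(-t,-x)$ of \eqref{dgBO}), the solution $U$ of \eqref{dgBO} with data $U_0$ is global and $\tilde u_n(\tau)\rightharpoonup U(\tau)$ in $H^{\frac\alpha2}$ for every $\tau\in\RR$; by Rellich's theorem this convergence is strong in $L^2_{\mathrm{loc}}$.

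Next I would verify that $U$ satisfies the hypotheses of Proposition~\ref{PR:4}. For \eqref{eq:a1}: each $\tilde u_n(\tau)$ lies within $\epsilon$ of $Q_{\lambda(t_n+\tau)}(\cdot-(\rho(t_n+\tau)-\rho(t_n)))$, the scales stay in $(1-C\epsilon,1+C\epsilon)$ and the shifts are bounded by $C|\tau|$; a diagonal extraction over a countable dense set of $\tau$, together with weak lower semicontinuity and continuity in $\tau$, gives $\inf_{x_0}\|U(\tau)-Q(\cdot-x_0)\|_{H^{\frac\alpha2}}\le C\epsilon$ for all $\tau$, below the threshold of Proposition~\ref{PR:4} once $\epsilon$ is small. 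For \eqref{eq:a2} (uniform-in-time tightness of $U$ around its center) — the core of the argument — I would use Proposition~\ref{PR:2}(i) for $u$ between the fixed time $t_1=0$ and $t_2=t_n+\tau$: since $\rho(t_n+\tau)\to+\infty$, the weight $\varphi_A$ in the right-hand side sweeps to $-\infty$, and dominated convergence applied to the fixed $L^1$ function $u^2(0)$ yields $\limsup_n\int_{x>\rho(t_n+\tau)+x_0}u^2(t_n+\tau)\,dx\le Cx_0^{-(2r-1)}$, uniformly in $\tau$; the left-tail bound follows by running the same argument for the reflected solution $\hat u(t,x)=u(-t,-x)$, again a solution of \eqref{dgBO} close to the soliton family. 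Passing these bounds to the strong $L^2_{\mathrm{loc}}$ limit gives $\int_{|x|>B}|U(\tau,x-x_0(\tau))|^2\,dx\to0$ as $B\to\infty$ uniformly in $\tau$, with $x_0(\tau)=\lim_n(\rho(t_n+\tau)-\rho(t_n))$.

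Finally, Proposition~\ref{PR:4} applies and gives $U(\tau,x)=Q_{\lambda_0}(x-x_1-\lambda_0^{-2}\tau)$ for some $x_1\in\RR$, $\lambda_0$ close to $1$. Evaluating at $\tau=0$ and comparing with $U_0=\lambda_\infty^{-1/\alpha}(Q+\eta^\ast)(\lambda_\infty^{-2/\alpha}\cdot)$ shows that $Q+\eta^\ast$ is, up to the fixed rescaling, a translated dilate of $Q$; since $\eta^\ast$ is small in $H^{\frac\alpha2}$ and $\int\eta^\ast Q'=\int\eta^\ast\chi_0=0$ (both conditions pass to the weak limit), the uniqueness built into the modulation via the implicit function theorem (using $(\Lambda Q,\chi_0)\neq0$, $(Q',Q')\neq0$, and the parities of $\Lambda Q,Q',\chi_0$) forces $\lambda_0=\lambda_\infty$, $x_1=0$ and $\eta^\ast\equiv0$ — contradicting $\eta^\ast\not\equiv0$. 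Hence $\eta(t)\rightharpoonup0$ in $H^{\frac\alpha2}$ as $t\to+\infty$. The main obstacle is precisely the uniform-in-time tightness of the limit $U$: extracting the left-tail decay from the one-sided monotonicity estimates of Proposition~\ref{PR:2} (where the $\varphi_A$-boundary term gives no help directly) requires the reflection trick and a careful limiting procedure, as carried out in \cite{KM}.
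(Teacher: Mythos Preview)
Your strategy is exactly the one the paper invokes (it omits the proof and points to \cite{KM} and \cite{MMjmpa}): modulate, take a sequence $t_n\to+\infty$, pass to a weak limit $U$ via Theorem~\ref{TH3}, use the monotonicity of Proposition~\ref{PR:2} to show $U$ is uniformly $L^2$-localized around its center, apply the nonlinear Liouville property (Proposition~\ref{PR:4}) to force $U$ to be a soliton, and then use the orthogonality of $\eta^\ast$ to conclude $\eta^\ast=0$. That is the correct outline.

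One point needs correction. Your reflection argument for the left tail does not work as written: applying Proposition~\ref{PR:2}(i) to $\hat u(t,x)=u(-t,-x)$ from a fixed initial time $\hat t_1$ produces right-tail decay for $\hat u$ at forward times $\hat t_2>0$, which translates into left-tail decay for $u$ at \emph{negative} times $-\hat t_2$, not at $t_n+\tau\to+\infty$. The reflection sends the sequence $t_n$ to $-t_n$, so ``the same argument'' lands on the wrong side of the time axis. The way the paper actually handles the left tail (see the analogous computation in Lemma~\ref{lemma15}) is to use Proposition~\ref{PR:2}(ii) between $t_1=t_{n'}+\tau$ and a later $t_2=t_m$, $m>n'$: as $m\to\infty$ the weight on the left-hand side of \eqref{monotonicity2} drifts to $+\infty$, and combining this with $L^2$ conservation (or with the weak convergence of $u(t_m,\cdot+\rho(t_m))$) yields
\[
\int \tilde u^2(\tau,x)\bigl(\varphi_A(+\infty)-\varphi_A(x-\tilde\rho(\tau)+x_0)\bigr)\,dx \le \frac{C}{x_0^{2r-1}},
\]
which is precisely the uniform left-tail bound you need. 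The reflection symmetry is how \eqref{monotonicity2} is \emph{derived} from \eqref{monotonicity1} in the proof of Proposition~\ref{PR:2}, but at the level of the asymptotic-stability argument you must feed in two times going to $+\infty$, not reflect to negative times. With this fix your argument goes through.
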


Except for the presence of the scaling parameter, it is similar to the proof of Theorem 2 from Theorem 1 in \cite{KM}. It is also close to the original proof for the gKdV equation in \cite{MMjmpa}. We thus omit the proof.

\section{Finite or infinite time blow up in the energy space}

In this section, we prove Theorem \ref{TH2} following the strategy of \cite{Me} and using the classification result given by Proposition \ref{PR:4}.

\medskip

Let  $\alpha \in (\alpha_0,2]$ where $\alpha_0$ be given by Proposition \ref{pr:close2}. Consider  
 an initial data $u(0) \in H^{\frac \alpha 2}(\RR)$ such that
$$
	E(u(0))<0 \quad \text{and} \quad  0<\beta(u(0))=\int u^2(0) -\int Q^2 <¬†\beta_0,
$$
where $\beta_0$ is small enough (to be chosen) and  $u(t)$  the corresponding solution of \eqref{dgBO}.
 Let $[0,T)$, $0<T\leq +\infty$ be the maximal interval of existence of $u(t)$  as a solution of \eqref{dgBO} in $H^{\frac \alpha 2}$ (for $t\geq 0$).

We need the following variational result concerning negative energy $H^{\frac \alpha 2}$ functions, with $L^2$ norm close to the $L^2$ norm of $Q$. 

\begin{lemma}\label{claim10}
There exists $\beta_0>0$ such that for all $v\in H^{\frac \alpha 2}$, if $E(v)<0$ and
$\beta(v)< \beta_0$ then there exists $x_0 \in \RR$, $\lambda_0>0$, $\epsilon=\pm 1$ such that
$$
\|Q - \epsilon \lambda_0^{\frac 1 \alpha} v (\lambda_0^{\frac 2 \alpha} (x+x_0)) \|_{H^{\frac \alpha 2}}
\leq \delta(\beta),
$$
where $\delta(\beta)\to 0$ as $\beta\to 0$.
\end{lemma}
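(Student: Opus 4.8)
The plan is to argue by contradiction using a concentration--compactness / profile decomposition argument, exactly in the spirit of the variational characterization of $Q$ already established in Proposition~\ref{pr:close2}~(ii). Suppose the statement fails: there exist a sequence $\beta_n\to 0$ and functions $v_n\in H^{\frac\alpha2}$ with $E(v_n)<0$, $\beta(v_n)=\int v_n^2-\int Q^2<\beta_n$ (so $\int v_n^2\to\int Q^2$), yet $v_n$ stays at distance $\geq\delta_0>0$ in $H^{\frac\alpha2}$ from the whole family $\{\pm\lambda_0^{\frac1\alpha}Q(\lambda_0^{\frac2\alpha}(\cdot+x_0))\}$. First I would normalize: using the scaling invariance $v\mapsto \lambda^{\frac1\alpha}v(\lambda^{\frac2\alpha}\cdot)$ which preserves $\int v^2$ and scales $E$ by a positive power of $\lambda$, I can rescale each $v_n$ so that, say, $\int|\DD^{\frac\alpha2}v_n|^2=\int|\DD^{\frac\alpha2}Q|^2$ (this uses that $E(v_n)<0$ forces $\int v_n^2>\int Q^2$ via the sharp Gagliardo--Nirenberg inequality \eqref{eq:bGN}, hence $\int|\DD^{\frac\alpha2}v_n|^2$ is bounded below away from $0$ and the rescaling is well-defined; it also keeps the rescaled sequence bounded in $H^{\frac\alpha2}$). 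Note that $\delta_0$-separation from the soliton family is invariant under this normalization.

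Next I would establish that, after the normalization, $(v_n)$ is a minimizing sequence for $J_1$ in \eqref{eq:mini1} up to lower order terms. Indeed, $E(v_n)<0$ together with $\int v_n^2\to\int Q^2$ and $\int|\DD^{\frac\alpha2}v_n|^2=\int|\DD^{\frac\alpha2}Q|^2$ gives, via \eqref{eq:bGN} and Lemma~\ref{LE:app}, that $\int|v_n|^{2\alpha+2}\to\int Q^{2\alpha+2}$ and that $j_1(v_n)\to J_1=j_1(Q)$. So $(|v_n|)$ (or a symmetrized version, using the rearrangement inequality from Remark~\ref{rk:tnt}) is a genuine minimizing sequence for $j_1$. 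Now I invoke the concentration--compactness analysis that underlies the proof of Proposition~\ref{le:Q}: a minimizing sequence for $j_1$, after translation $v_n(\cdot+x_n)$, converges strongly in $H^{\frac\alpha2}$ (modulo the known scaling/sign degrees of freedom) to a minimizer, which by Proposition~\ref{pr:close2}~(i) is $\pm Q$ up to translation. The dichotomy and vanishing cases are excluded exactly as in Section~3 (vanishing kills the $L^{2\alpha+2}$ norm; dichotomy is incompatible with the strict subadditivity of the associated family of variational problems). The normalization of $\int|\DD^{\frac\alpha2}v_n|^2$ pins down the residual scaling parameter, so in fact $v_n(\cdot+x_n)\to \pm Q$ strongly in $H^{\frac\alpha2}$, contradicting the $\delta_0$-separation. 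Unwinding the normalizing rescaling produces the parameters $\lambda_0,x_0,\epsilon$ and the quantitative bound $\delta(\beta)\to0$.

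The main obstacle, as usual in this type of statement, is the compactness step: ruling out the vanishing and dichotomy scenarios for the minimizing sequence of $j_1$, which for a nonlocal operator $\DD^\alpha$ with $1<\alpha<2$ requires the concentration--compactness machinery on the continuous family of variational problems rather than the simpler radial-decay argument available in higher dimensions --- but this is precisely what is already carried out in the proof of Proposition~\ref{le:Q}~(ii), so I would simply quote it. A secondary point requiring a little care is the well-definedness and uniformity of the initial rescaling: one must check that $E(v_n)<0$ with $\int v_n^2$ close to $\int Q^2$ indeed forces $\int|\DD^{\frac\alpha2}v_n|^2$ into a fixed compact subinterval of $(0,\infty)$, so that the normalized sequence is bounded in $H^{\frac\alpha2}$ and the argument closes quantitatively; this again follows from \eqref{eq:bGN} and Lemma~\ref{LE:app}. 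Apart from these, the proof is a routine translation of the variational characterization \eqref{varchar} into a quantitative ``$\varepsilon$--$\delta$'' statement via compactness.
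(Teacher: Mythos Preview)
Your proposal is correct and matches the paper's intended approach: the paper simply writes ``We omit the proof since it is similar to the one of Lemma~1 in \cite{Me}, using \eqref{varchar},'' and the argument in Merle's paper is precisely the contradiction/normalization/concentration--compactness scheme you describe --- rescale to fix $\int|\DD^{\frac\alpha2}v_n|^2$, observe this produces a minimizing sequence for $j_1$, and use the compactness of minimizing sequences (already established in the proof of Proposition~\ref{le:Q}~(ii)) together with the uniqueness in Proposition~\ref{pr:close2}~(i)--(ii) to force strong $H^{\frac\alpha2}$ convergence to $\pm Q$ after translation.

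One minor remark: your ``secondary point'' about $\int|\DD^{\frac\alpha2}v_n|^2$ lying in a fixed compact subinterval of $(0,\infty)$ \emph{before} rescaling is unnecessary (and in fact not true in general); the rescaling is always well-defined for $v_n\not\equiv0$, and \emph{after} rescaling both $\int v_n^2$ and $\int|\DD^{\frac\alpha2}v_n|^2$ are controlled, which is all you need. The rescaling parameter $\lambda_n$ itself need not stay bounded, and the statement of the lemma does not require it to.
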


We omit the proof since it is similar to the one of Lemma 1 in \cite{Me}, using \eqref{varchar}.

\medskip

By conservation of mass, of energy and under the assumptions on $u(0)$, for $\beta_0$ small enough, it follows from Lemma \ref{claim10} applied to $u(t)$ for all $t\in [0,T)$, that 
 $u(t)$ is close to $\pm Q_{\lambda_0(t)}(x-\rho_0(t))$ for some $\lambda_0(t)$, $\rho_0(t)$. 
Without loss of generality, and by continuity in $H^{\frac \alpha 2}$, we assume that $u$ is close to $+Q$ (up to scaling and translation), by possibly considering $-u$ instead of $u$ and using the invariance of the equation.

Now, from Lemma \ref{MODULATION}, possibly taking $\beta_0$ smaller, there exist $\lambda(t)$, $\rho(t)$ on $[0,T)$ such that, for all $t\in [0,T)$,
$$
	\eta(t,y) = \lambda^{\frac 1\alpha}(t) u(t,\lambda^{\frac 2 \alpha}(t) y + \rho(t)) - Q(y)
$$
satisfies
\begin{align}
& \int Q'(y) \eta(t,y)dy = \int \chi_0(y) \eta(t,y) dy =0	\\
&	\|\eta(t)\|_{H^{\frac \alpha 2}} \leq C \sqrt{\beta(u(0))},
\label{eq1:le11}
\\
& \left|\frac{\lambda_s}{\lambda} \right|
+ \left| \left(\frac {\rho_s}{\lambda^{\frac 2 \alpha}} -1\right) \right|  \leq C \sqrt{ \beta(u(0))}.
\label{eq2:le11}
\end{align}
Note that Lemmas \ref{claim10} and \ref{MODULATION} only give $\|\eta\|_{H^{\frac \alpha 2}} \leq C \delta(\beta(0))$, where $\delta(\beta)$ is defined in Lemma \ref{claim10}, but not explicit.
Actually, in this context, this estimate can be   refined to get \eqref{eq1:le11}
by using energy arguments, exactly as in the proof of Lemma 3 in \cite{Me}.
\medskip

Now, we prove that
\begin{itemize}
	\item Either the solution $u(t)$ ceases to exist in finite time $0<T<+\infty$ and consequently   by Theorem~\ref{TH1}, $\lim_{t\to T} \| u(t) \|_{H^{\frac \alpha 2}} = +\infty$.
	\item Or it exists for all time and then $\lim_{t\to +\infty} \|u(t)\|_{H^{\frac \alpha 2}} = +\infty$.
\end{itemize}
The proof is by contradiction. Assume on the contrary that the solution
$u(t)$ is globally defined in $H^{\frac \alpha 2}$ for $t\geq 0$ and that there exists an increasing sequence $\bar t_m \to +\infty$ and $c_0>0$ such that
\begin{equation}\label{eq1:pth2}
	\|u(\bar t_m)\|_{H^{\frac \alpha 2}} \leq c_0.
\end{equation}
We proceed in four steps to reach a contradiction.

\medskip

\noindent\textbf{Step 1.} Renormalisation and reduction of the problem.
We recall that $\| u(t)\|_{L^2}$ is bounded and we define   
$$
	\ell = \liminf_{t\to +\infty} \| |D|^{\frac \alpha 2} u_n (t) \|_{L^2} < \infty. 
$$
Note first that $\ell > 0$. Indeed, for all time $t$, 
$\int |u(t)|^{2\alpha +2} > -(\alpha +1)(2 \alpha +1) E(u(0)) >0$ and   by the Gagliardo-Nirenberg inequality \eqref{gn}, we obtain
$\ell >0$.
From the definition of $\ell$, there exists $t_{0}$ such that
$$
\||D|^{\frac \alpha 2} u (t_{0}) \|_{L^2} \leq \ell (1+ \beta_{0})
\quad \text{and} \quad
\forall t\geq t_{0}, \ \||D|^{\frac \alpha 2} u (t) \|_{L^2}
\geq \ell (1-\beta_{0}).
$$
We consider the following rescalled version of $u(t,x)$: let $\bar \lambda = \frac {\|\DD^{\frac \alpha 2} Q\|_{L^2}}
	{\ell}$ and
$$
	\bar u(t,x) =  \bar \lambda ^{\frac 1 \alpha}
	u\left(  \bar \lambda ^{2+\frac 2 \alpha} t + t_{0},
	 \bar \lambda ^{\frac 2 \alpha}x\right).
$$
Note that  $\|Q\|_{L^2}^2 < \|\bar u(0)\|_{L^2}^2 < \|Q\|_{L^2}^2 + \beta_0$,
$E(\bar u(0))<0$, $\beta(\bar u(0)) < \beta_{0}$, $\bar u(t)$ is still 
 a solution of \eqref{dgBO} in $H^{\frac \alpha 2}$ defined for all $t\geq 0$, and
for all $t\geq 0$, $\|\DD^{\frac \alpha 2}\bar u(t)\|_{L^2}\geq (1-\beta_{0}) \|\DD^{\frac \alpha 2} Q\|_{L^2}$.
Moreover, there exists a sequence $t_{m}\to +\infty$, such that
$$
\lim_{{m\to +\infty}} \|\DD^{\frac \alpha 2} \bar u(t_{m}) \|_{L^2}
= \|\DD^{\frac \alpha 2} Q \|_{L^2}\quad \text{and}\quad 
\lim_{m\to +\infty}t_{m+1}-t_{m} = +\infty.
$$
Let $\bar \eta(t)$, $\bar \lambda(t)$ and $\bar \rho(t)$ be the parameters of the decomposition of $\bar u(t)$ given by Lemmas \ref{claim10} and \ref{MODULATION}.
Then,   for $\beta_{0}>0$ small enough, 
$$
\forall t\geq 0,\quad \bar \lambda(t)\leq 2.
$$
From the bound of $\bar u(t_m)$ in $H^{\frac \alpha 2}$, there exists
$\tilde u(0)\in H^{\frac \alpha 2}$ such that after possibly extracting a subsequence (still denoted by $(t_m)$)
$$
	\bar u(t_{m} , . + \rho(t_{m}) ) \rightharpoonup \tilde  u(0) 
	\quad \text{in $H^{\frac \alpha 2}$ as $m\to +\infty$.}
$$
Taking $\beta_{0}$ small enough, it is clear that $\tilde u(0)$ is close to $Q$ and in particular cannot be zero.
Let now $\tilde u(t)$ be the maximal solution of \eqref{dgBO} in $H^{\frac \alpha 2}$ corresponding to $\tilde u(0)$ given by Theorem \ref{TH1}.
We denote by $(-T_{1},T_{2})$ the maximal interval of existence of 
$\tilde u(t)$. Without a further analysis through Steps 2--4 below, we do not know if $\tilde u(t)$ is globally defined for $t>0$ or $t<0$.

\medskip

\noindent\textbf{Step 2.} First properties of the limiting problem.

\begin{lemma}\label{lemma13}
The following holds
\begin{equation}\label{eq1:le13}
	0< \beta(\tilde u(0)) \leq \beta_{0}
	\quad \text{and}\quad
E(\tilde u(0))   <0 .
\end{equation}
\end{lemma}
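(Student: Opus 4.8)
We pass to the weak limit $m\to+\infty$ in the conservation laws for $\bar u$. Write $\bar u_m:=\bar u(t_m,\,\cdot+\bar\rho(t_m))$, so that $\bar u_m\rightharpoonup\tilde u(0)$ in $H^{\frac\alpha2}$, the sequence $(\bar u_m)$ is bounded in $H^{\frac\alpha2}$ by Step 1, and, by translation invariance together with the conservation of mass and energy, $\int\bar u_m^2=\int\bar u(0)^2=\int Q^2+\beta(\bar u(0))$ and $E(\bar u_m)=E(\bar u(0))<0$ for all $m$. The upper bound $\beta(\tilde u(0))\le\beta_0$ is then immediate from weak $L^2$ lower semicontinuity: $\int\tilde u(0)^2\le\liminf_m\int\bar u_m^2=\int Q^2+\beta(\bar u(0))<\int Q^2+\beta_0$.

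For the energy, recall $E(v)=\int|D^{\frac\alpha2}v|^2-\tfrac1{(\alpha+1)(2\alpha+1)}\int|v|^{2\alpha+2}$. The quadratic part is weakly lower semicontinuous, $\int|D^{\frac\alpha2}\tilde u(0)|^2\le\liminf_m\int|D^{\frac\alpha2}\bar u_m|^2$, so the whole point is to pass to the limit in the nonlinear term, i.e. to show
\[
\int|\bar u_m|^{2\alpha+2}\longrightarrow\int|\tilde u(0)|^{2\alpha+2};
\]
granting this, $E(\tilde u(0))\le\liminf_m E(\bar u_m)=E(\bar u(0))<0$. Since $\tfrac\alpha2>\tfrac12$ (as $\alpha>\alpha_0\ge1$), the Rellich theorem gives $\bar u_m\to\tilde u(0)$ strongly in $L^{2\alpha+2}$ on every bounded interval, so the remaining issue is that \emph{no mass of $\bar u$ may escape to spatial infinity}. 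This is where the $L^2$ monotonicity of Proposition \ref{PR:2} enters: applying it with $t_1=0$, $t_2=t_m$, together with the conservation of $\int\bar u^2$ (and $\bar\rho$ nondecreasing, since $\bar\lambda\le2$), one bounds $\sup_m\int_{|x-\bar\rho(t_m)|>R}\bar u^2(t_m)\,dx$ by a quantity tending to $0$ as $R\to+\infty$, exactly as in \cite{Me} and \cite{KM}; combined with $\|\bar u_m\|_{L^\infty}\le C\|\bar u_m\|_{H^{\frac\alpha2}}\le C$ this controls $\sup_m\int_{|x|>R}|\bar u_m|^{2\alpha+2}$, and letting first $m\to+\infty$ and then $R\to+\infty$ gives the displayed convergence. (Morally, this is robust: any $L^2$-mass that escaped would be bounded by $\|\bar\eta(t_m)\|_{L^2}^2\le C\beta_0<\int Q^2$, hence, by the sharp Gagliardo--Nirenberg inequality \eqref{eq:bGN}, would carry nonnegative energy, so that in any case $E(\tilde u(0))\le E(\bar u(0))<0$.)

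It remains to prove $\beta(\tilde u(0))>0$, i.e. $\int\tilde u(0)^2>\int Q^2$; but this is forced by $E(\tilde u(0))<0$, since the sharp Gagliardo--Nirenberg inequality \eqref{eq:bGN} gives $E(v)\ge0$ whenever $\int v^2\le\int Q^2$. The main obstacle in the argument is the uniform-in-$m$ tail estimate of the second paragraph: converting the one-sided monotonicity of Proposition \ref{PR:2} into a genuine no-loss-of-mass statement requires the technical bookkeeping with the weights $\varphi_A$ and the parameter $\mu$ of \cite{Me}, \cite{KM}, and it is precisely here that the $L^2$-monotonicity results of Section 4 (new for $1<\alpha<2$) are used.
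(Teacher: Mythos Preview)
Your overall strategy is right and the deduction of $\beta(\tilde u(0))\le\beta_0$ by lower semicontinuity, as well as $\beta(\tilde u(0))>0$ from $E(\tilde u(0))<0$ via the sharp Gagliardo--Nirenberg inequality, are exactly what the paper does. The issue is the central step for the energy: you claim that Proposition~\ref{PR:2} with $t_1=0$, $t_2=t_m$ and $L^2$ conservation yields
\[
\sup_m\int_{|x-\bar\rho(t_m)|>R}\bar u^2(t_m)\,dx\xrightarrow[R\to+\infty]{}0,
\]
and then conclude $\int|\bar u_m|^{2\alpha+2}\to\int|\tilde u(0)|^{2\alpha+2}$. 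The right-hand tail is indeed uniformly controlled this way, but the \emph{left}-hand tail is not: monotonicity on the left (Proposition~\ref{PR:2}\,(ii)) compares mass at $t_m$ to the right of a point $\bar\rho(t_m)-\mu(\bar\rho(t_m)-\bar\rho(0))-x_0$ which drifts to $+\infty$, and after subtracting from $L^2$ conservation you get no information on $\int_{x<\bar\rho(t_m)-R}\bar u^2(t_m)$. Indeed, since $\bar u_m=\bar\lambda(t_m)^{-1/\alpha}(Q+\bar\eta(t_m))(\bar\lambda(t_m)^{-2/\alpha}\cdot)$ with only $\|\bar\eta(t_m)\|_{L^2}^2\le C\beta_0$, the left tail can carry $L^2$ mass of order $\beta_0$ for every $R$, so the displayed claim is false in general, and $\int|\bar u_m|^{2\alpha+2}$ need not converge. (In the paper, the two-sided decay in Lemma~\ref{lemma15} is obtained for the \emph{limit} $\tilde u$, not for $\bar u_m$, and the left side uses the sequence $(t_m)$ twice together with weak convergence; this comes \emph{after} Lemma~\ref{lemma13}.)

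Your parenthetical remark is, in fact, the right argument and is precisely what the paper does: the tail $v_m(1-\chi_A)$ carries $L^2$ mass at most $C\beta_0+\int_{|y|>cA}Q^2<\tfrac12\int Q^2$ (for $A$ large and $\beta_0$ small), hence $E(v_m(1-\chi_A))\ge0$ by \eqref{eq:bGN}, so it may be discarded. What is missing from your sketch, and supplied in the paper, is the bookkeeping needed because $|D|^{\frac\alpha2}$ does not commute with multiplication: one writes
\[
E(v_m)=\|(|D|^{\frac\alpha2}v_m)\sqrt{\chi_A}\|_{L^2}^2-\tfrac1{(1+\alpha)(2\alpha+1)}\int|v_m\chi_A|^{2\alpha+2}+E(v_m(1-\chi_A))+R_{m,A}+\tilde R_{m,A},
\]
controls the commutator term $R_{m,A}\ge -CA^{-\alpha/2}$ by $\|[|D|^{\frac\alpha2},\chi_A]\|\le CA^{-\alpha/2}$, passes to the limit $m\to\infty$ on the localized pieces (using Rellich on compact sets), and then sends $A\to\infty$. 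So the monotonicity of Section~4 is not used in this lemma; promote your parenthetical into the main argument and add the commutator estimate.
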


\begin{proof}[Proof of Lemma \ref{lemma13}]
Let
\begin{equation}\label{eq:104bis}
 v_{m}(x)= \bar u(t_{m},x+\rho(t_{m}))\rightharpoonup \tilde u(0)
\quad \text{in $H^{\frac \alpha 2}$ as $m\to +\infty$.}
\end{equation}
By weak convergence 
$$
\beta(\tilde u(0)) \leq \liminf_{{m\to +\infty}} \beta(v_{m}) < \beta_{0}.
$$
The positivity $\beta(\tilde u(0))>0$ is a consequence of the negativity of the energy of $\tilde u(0)$ and \eqref{eq:bGN}, which we prove now.

Let $\chi\in {\cal C}^\infty_0(\RR)$ such that $0\le \chi \le 1$, $\chi(x)=1$ if $|x|\le 1$ and $\chi(x)=0$ if $|x|\ge 2$.
  Let $\chi_{A}(x) = \chi(x/A)$, for $A>1$.
Then,
$$
E(v_{m}) =\|(\DD^{\frac \alpha 2}v_{m})\sqrt{\chi_{A}})\|_{L^2}^2 - \frac 1{(1+\alpha)(2\alpha +1)}\int |v_{m}\chi_A|^{2\alpha +2} + E(v_{m} (1-\chi_{A}) ) + R_{m,A} + \tilde R_{m,A},
$$
where
\begin{align*}
R_{m,A} & = \|\DD^{\frac \alpha 2}v_{m}\|_{L^2}^2 
- \|(\DD^{\frac \alpha 2}v_{m})\sqrt{\chi_{A}})\|_{L^2}^2
- \|\DD^{\frac \alpha 2}(v_{m}(1-\chi_{A}))\|_{L^2}^2,\\
\tilde R_{m,A} & = -\frac 1{(1+\alpha)(2\alpha +1)}\int |v_{m}|^{2\alpha +2}
(1-\chi_{A}^{2\alpha +2} - (1-\chi_{A})^{2\alpha+2}).
\end{align*}
First, we control the term $R_{m,A}$.
Note that from standard arguments, for all $u$,
\begin{align*}
\left| \| \DD^{\frac \alpha 2} (1-\chi_{A}) u \| - \|(1-\chi_{A}) \DD^{\frac \alpha 2} u \| \right|
&\leq C \| [\DD^{\frac \alpha 2}, (1- \chi_{A})] u\|=
C \| [\DD^{\frac \alpha 2}, \chi_{A}] u\| \leq \frac C{A^{\frac \alpha 2}}\|u\|,
\end{align*}
and so
$$
 \| \DD^{\frac \alpha 2} (1-\chi_{A}) u \|^2 \leq (1+A^{-\frac \alpha 2})  \|(1-\chi_{A}) \DD^{\frac \alpha 2} u \|^2
+  \frac C{A^{\frac \alpha 2 }}\|u\|^2.
$$
Combining these two estimates, we get
$$
R_{m,A} \geq - C A^{-\frac\alpha 2} \|v_m\|^2_{H^{\frac \alpha 2}} \geq - C A^{-\frac\alpha 2}
$$

Next, we control $\tilde R_{m,A}$. By weak convergence in $H^{\frac \alpha 2}$ and the properties of $\chi_A$, we have
$$
\lim_{m\to +\infty} \tilde R_{m,A} 
= - \frac 1{(1+\alpha)(2\alpha  +1)} \int |\tilde u(0)|^{2 \alpha +2} (1- \chi_A^{2\alpha +2}
- (1-\chi_A)^{2 \alpha +2}) = \tilde R_{A}.
$$
Moreover, from the definition of $\chi_A$, the following holds $\lim_{A\to +\infty} \tilde R_A=0$.

Finally, by \eqref{eq:bGN} (Gagliardo-Nirenberg with best constant), we have $E(v_m (1-\chi_A)) \geq 0$ since for $A$ large and $\beta_0$ small, for all $m$,
$\int v_m^2 (1-\chi_A)^2 \leq \frac 12 \int Q^2$.   

Therefore, 
\begin{align*}
0 > E(\bar u(0)) = E(v_m) & \geq \|(\DD^{\frac \alpha 2}v_{m})\sqrt{\chi_{A}})\|_{L^2}^2 - \frac 1{(1+\alpha)(2\alpha +1)}\int |v_{m}\chi_A|^{2\alpha +2} \\ &- C A^{-\frac\alpha 2} \|\tilde u(0)\|^2_{L^2} + \tilde R_{m,A}\end{align*}
and passing to the limit as $m\to +\infty$, we get
$$
0 > E(\bar u(0)) \geq \|(\DD^{\frac \alpha 2}\tilde u(0))\sqrt{\chi_{A}})\|_{L^2}^2 - \frac 1{(1+\alpha)(2\alpha +1)}\int |\tilde u(0)\chi_A|^{2\alpha +2}   - C A^{-\frac\alpha 2} \|\tilde u(0)\|^2_{L^2} + \tilde R_{A}.
$$
Finally, passing to the limit as $A \to +\infty$, we obtain
$
0 >E(\bar u(0)) \geq E(\tilde u(0)).$
\end{proof}

\begin{lemma}\label{lemma14}
For all $t\in (-T_{1},T_{2})$,
\begin{equation}\label{eq1:le14}
\bar u(t_{m}+t,\bar \rho(t_{m}) + .) \rightharpoonup \tilde u(t)\quad
\text{in $H^{\frac \alpha 2}(\RR)$ as $m\to +\infty$.}
\end{equation}
Moreover, if $\tilde \eta(t,x)$, $\tilde \lambda(t)$ and $\tilde \rho(t)$ are the parameters of the decomposition of $\tilde u(t,x)$, then for all $t\in (-T_{1},T_{2})$,
\begin{equation}\label{eq2:le14}
\bar \lambda(t_{m}+t) \to \tilde \lambda(t),\quad
\bar \rho(t_{m}+t) -\bar\rho(t_m)\to \tilde \rho(t).
\end{equation}
\end{lemma}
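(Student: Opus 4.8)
The plan is to prove Lemma \ref{lemma14} as a direct consequence of the weak continuity of the flow (Theorem \ref{TH3}) together with the uniform bounds already established in Step 1. First I would observe that by Step 1 the sequence $v_m = \bar u(t_m,\cdot+\bar\rho(t_m))$ is bounded in $H^{\frac\alpha2}$ (say by a constant $A$), and $v_m \rightharpoonup \tilde u(0)$ in $H^{\frac\alpha2}$ by construction, with $\tilde u(0)$ satisfying $\|\tilde u(0)\|_{H^{\frac\alpha2}}\le A$. Fix any $t\in(-T_1,T_2)$. Since $t_{m+1}-t_m\to+\infty$, for $m$ large the interval $[t_m+\min(t,0),t_m+\max(t,0)]$ has length at most $|t|$ and is contained in the domain of existence of $\bar u$; moreover by translation invariance of the equation, $x\mapsto \bar u(t_m+s,x+\bar\rho(t_m))$ is, as a function of $s$ near $0$, the $H^{\frac\alpha2}$ solution of \eqref{dgBO} with data $v_m$ at $s=0$. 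On a fixed time interval $[-T(A'),T(A')]$ (with $A'$ slightly larger than $A$ to absorb the weak limit) Theorem \ref{TH3} applies: since $v_m\rightharpoonup \tilde u(0)$ in $H^{\frac\alpha2}$ weak, the corresponding solutions satisfy $\bar u(t_m+t,\cdot+\bar\rho(t_m))\rightharpoonup \tilde u(t)$ in $H^{\frac\alpha2}$ weak, where $\tilde u(t)$ is the solution with data $\tilde u(0)$. A standard continuation/covering argument in $t$ — iterating the local weak continuity on successive subintervals of length $T(A')$ — extends this to all $t$ in the maximal interval $(-T_1,T_2)$, which proves \eqref{eq1:le14}.

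The second statement \eqref{eq2:le14} then follows from the continuity of the modulation parameters with respect to the weak topology, established through Lemma \ref{MODULATION}. Indeed, by Lemmas \ref{claim10} and \ref{MODULATION}, for each fixed $t$ the function $\bar u(t_m+t,\cdot+\bar\rho(t_m))$ lies in a small $H^{\frac\alpha2}$-tube around the soliton family, so it has a well-defined decomposition with parameters $\bar\lambda(t_m+t)$ and $\bar\rho(t_m+t)-\bar\rho(t_m)$ (the translation normalization is shifted by $\bar\rho(t_m)$ precisely because we translated the data). The modulation parameters are determined by the orthogonality conditions \eqref{ortho1}, i.e. by the vanishing of $\int Q'\,\eta$ and $\int\chi_0\,\eta$; since $Q'$ and $\chi_0$ decay (Proposition \ref{le:Q}(iv)), these are continuous linear functionals that are in fact continuous for the $H^{\frac\alpha2}$-weak topology on bounded sets. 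Combined with \eqref{eq1:le14} and the implicit function theorem structure of the modulation (the Jacobian is nondegenerate uniformly in the tube), one concludes $\bar\lambda(t_m+t)\to\tilde\lambda(t)$ and $\bar\rho(t_m+t)-\bar\rho(t_m)\to\tilde\rho(t)$. One should check that the translation parameter is only recovered up to the shift $\bar\rho(t_m)$, which is exactly the normalization chosen in the statement; this is a bookkeeping point rather than a real difficulty.

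The main obstacle is making the weak-continuity step rigorous uniformly on the (a priori only) maximal interval $(-T_1,T_2)$, rather than on a single short interval $[-T(A),T(A)]$. The issue is that Theorem \ref{TH3} gives weak continuity only on a time interval whose length depends on the $H^{\frac\alpha2}$-bound of the data; to propagate past that one needs a uniform $H^{\frac\alpha2}$-bound for $\bar u(t_m+t,\cdot)$ on the whole interval where $\tilde u$ exists. Here the soliton-tube structure saves us: by Step 1 (the bounds \eqref{eq1:le11}, \eqref{eq2:le11} and $\bar\lambda(t)\le 2$) the rescaled solution stays uniformly close to the soliton family for \emph{all} $t\ge 0$, hence $\|\bar u(t_m+t,\cdot)\|_{H^{\frac\alpha2}}$ is uniformly bounded on $[-t_m,+\infty)$, and in particular on any fixed compact subinterval of $(-T_1,T_2)$ once $m$ is large. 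With this uniform bound in hand, one applies Theorem \ref{TH3} repeatedly on a finite chain of overlapping subintervals covering the given compact set, passing to the weak limit at each step; the solution $\tilde u$ constructed this way is then shown a posteriori to be the maximal $H^{\frac\alpha2}$ solution with data $\tilde u(0)$. Apart from this uniformity point, the proof is a routine combination of Theorems \ref{TH1}, \ref{TH3} and Lemma \ref{MODULATION}, and for brevity one may state it with the indication that the argument is identical to the corresponding step in \cite{Me} (or \cite{KM} for $\alpha=1$).
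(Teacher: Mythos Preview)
Your proposal is correct and follows essentially the same approach as the paper: invoke Theorem~\ref{TH3} (weak continuity of the flow) together with the uniform $H^{\frac\alpha2}$ bounds from Step~1 to obtain \eqref{eq1:le14}, then deduce \eqref{eq2:le14} from the continuity of the modulation map, referring ultimately to \cite{Me}. In fact you supply more detail than the paper does (the paper's proof is a three-line sketch that cites Theorem~\ref{TH3}, Lemmas~\ref{lemma13}, \ref{claim10}, \ref{MODULATION}, and then defers to \cite{Me}, Lemma~8 and Corollary~2); your explicit discussion of the iteration over subintervals of length $T(A')$ and of the weak continuity of the orthogonality functionals is exactly what underlies the ``standard limiting and uniqueness arguments'' the paper omits.
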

The first part of Lemma \ref{lemma14} follows from Theorem \ref{TH3}. By lemmas  \ref{lemma13}  and \ref{claim10}, $\tilde u(t)$ is close to $Q$ (up to scaling and translation) for all $t\in (-T_{1},T_{2})$, and we can apply lemma \ref{MODULATION} †to obtain a refined decomposition of $\tilde u(t)$ around $Q$, denoted by $\tilde \eta(t)$, $\tilde \lambda(t)$ and $\tilde \rho(t)$. Then \eqref{eq2:le14} follows from standard limiting and uniqueness arguments which we omit. See \cite{Me}, Lemma 8, Corollary 2 and references therein.

\medbreak

\noindent\textbf{Step 3.} Decay properties of the limiting problem by monotonicity properties.

\begin{lemma}\label{lemma15}
For all $t\in (-T_{1},T_{2})$, for all $x_{0}>1$,
\begin{equation}\label{eq1:le15}
\|\tilde u(t)\|_{L^2(|x-\tilde \rho(t)|\geq x_{0})}^2 \leq C |x_{0}|^{-\alpha}.
\end{equation}
\end{lemma}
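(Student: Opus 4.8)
The plan is to bound the mass of $\tilde u(t)$ to the right of $\tilde\rho(t)+x_0$ and to the left of $\tilde\rho(t)-x_0$ separately, working throughout with the choice $r=\tfrac{\alpha+1}{2}$ so that the error exponent $2r-1$ in Propositions \ref{PR:2} and \ref{PR:monoeta} becomes exactly $\alpha$. I would use repeatedly: the weak convergences $\bar u(t_m+t,\bar\rho(t_m)+\cdot)\rightharpoonup\tilde u(t)$ in $H^{\frac\alpha2}$ and $\bar\rho(t_m+t)-\bar\rho(t_m)\to\tilde\rho(t)$ from Lemma \ref{lemma14}; the fact, from \eqref{ortho2} together with \eqref{eq:surL} and $\bar\lambda\le2$, that $\bar\rho$ is of class $C^1$ and increasing on $[0,+\infty)$ with $\bar\rho'\ge\tfrac15$, so that $\bar\rho(t_m+t)\to+\infty$ as $m\to+\infty$; and the elementary properties of $\varphi_A$ (it is bounded, and $\varphi_A(z)\ge\varphi_A(0)>0$ for $z\ge0$).

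For the right tail I would apply the monotonicity \eqref{monotonicity1} to $\bar u$ on $[0,t_m+t]$ (legitimate since $\bar u$ stays close to a soliton with $\bar\lambda\le2$ for all times $\ge0$). The right-hand side is $\int\bar u^2(0,x)\,\varphi_A\big(x-\bar\rho(0)-\mu(\bar\rho(t_m+t)-\bar\rho(0))-x_0\big)\,dx+C_0x_0^{-\alpha}$; since $\bar\rho(t_m+t)\to+\infty$ the argument of $\varphi_A$ tends pointwise to $-\infty$, and as $\varphi_A$ is bounded and $\bar u^2(0)\in L^1$, dominated convergence kills that integral. Hence $\limsup_m\int\bar u^2(t_m+t)\,\varphi_A(\cdot-\bar\rho(t_m+t)-x_0)\le C_0x_0^{-\alpha}$. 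Passing to the weak limit — multiplying $\bar u(t_m+t,\bar\rho(t_m)+\cdot)$ by the weight $\sqrt{\varphi_A(\cdot+\bar\rho(t_m)-\bar\rho(t_m+t)-x_0)}$, which converges uniformly to $\sqrt{\varphi_A(\cdot-\tilde\rho(t)-x_0)}$, and using weak lower semicontinuity of the $L^2$ norm — I obtain $\int\tilde u^2(t)\,\varphi_A(\cdot-\tilde\rho(t)-x_0)\le C_0x_0^{-\alpha}$, and therefore $\int_{x>\tilde\rho(t)+x_0}\tilde u^2(t)\le\tfrac{C_0}{\varphi_A(0)}x_0^{-\alpha}$.

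The left tail is the delicate point, and I expect it to be the main obstacle. The argument above does not transpose: by \eqref{monotonicity2} the left-localized mass is \emph{non-decreasing} in forward time, hence cannot be dominated by its value at an earlier time, and since $\bar u$ exists only for $t\ge0$ the symmetry $(t,x)\mapsto(-t,-x)$ of \eqref{dgBO} is of no direct help at the level of $\bar u$. The way around it is to pass to the limit object first. Lemma \ref{lemma14} (with Lemmas \ref{lemma13} and \ref{claim10}) shows that $\tilde u(t)$ stays close to $Q$, up to scaling and translation, on all of $(-T_1,T_2)$, so $\|\tilde u(t)\|_{H^{\frac\alpha2}}$ stays bounded there (one checks along the way that $\tilde\lambda$ stays close to $1$); by the blow-up alternative (Remark \ref{RE:bip}) this forces $\tilde u$ to be global, with global modulation parameters $\tilde\eta,\tilde\lambda,\tilde\rho$ and $\sup_s\|\tilde\eta(s)\|_{L^2}$ small. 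One then runs, for $\tilde\eta$, the decay step of the proof of Lemma \ref{PR:5}: apply the perturbation monotonicity \eqref{monotonicity3} on $[s_1,s]$ and let $s_1\to-\infty$ (the boundary terms vanish and the remainder integral is $\le C\sup_s\|\tilde\eta(s)\|_{L^2}^2\int_{-\infty}^s(x_0+\mu(s-s'))^{-2r}\,ds'\le Cx_0^{-\alpha}$), and combine it with the mirror image of \eqref{monotonicity3} under $y\mapsto-y$, to get $\int_{|y|>y_0}\tilde\eta^2(s,y)\,dy\le C y_0^{-\alpha}$ for all $s$ and $y_0>1$. Undoing the rescaling $\tilde u(t,\tilde\lambda^{\frac2\alpha}y+\tilde\rho)=\tilde\lambda^{-\frac1\alpha}(Q(y)+\tilde\eta(t,y))$ and using the decay \eqref{eq:decay} of $Q$ (so $\int_{|y|>R}Q^2\le CR^{-(2\alpha+1)}$) together with $\tilde\lambda\le2$, this yields $\int_{|x-\tilde\rho(t)|>x_0}\tilde u^2(t)\le Cx_0^{-\alpha}$, in particular the left tail; together with the right-tail bound this is \eqref{eq1:le15}. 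The points I would have to handle with care are the global existence and the uniform control $\tilde\lambda\approx1$ for the limit $\tilde u$ (so that Proposition \ref{PR:monoeta} genuinely applies), and the legitimacy of the mirrored monotonicity, which rests on the $y\mapsto-y$ symmetry of the $\tilde\eta$-equation and the parities (even, resp.\ odd) of its forcing terms.
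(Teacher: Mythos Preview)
Your treatment of the right tail is correct and essentially matches the paper's argument.

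Your left-tail argument, however, is circular. You claim that ``$\tilde u(t)$ stays close to $Q$, up to scaling and translation, on all of $(-T_1,T_2)$, so $\|\tilde u(t)\|_{H^{\frac\alpha2}}$ stays bounded there (one checks along the way that $\tilde\lambda$ stays close to $1$)''. But closeness to $Q$ \emph{up to scaling} does not bound the $H^{\frac\alpha2}$ norm; that requires a uniform lower bound on $\tilde\lambda(t)$, and no such bound is available at this point of the proof. From Lemma~\ref{lemma14} and $\bar\lambda\le 2$ you only get $\tilde\lambda(t)\le 2$; the only time at which $\tilde\lambda$ is known to be near $1$ is $t=0$ (via \eqref{eq:truc}). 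In the paper, the control $\tilde\lambda\approx 1$ and the globality $T_1=T_2=\infty$ are established \emph{afterwards}, in Step~4, by a bootstrap (Lemma~\ref{cl:9}) whose proof relies precisely on Lemma~\ref{lemma15}. So you cannot invoke either Proposition~\ref{PR:monoeta} (which needs the regular regime $\lambda_0\equiv 1$) nor send $s_1\to -\infty$ for $\tilde\eta$.

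The paper avoids this by a double-limit trick that stays entirely with $\bar u$, which is global for $t\ge 0$. Apply \eqref{monotonicity2} to $\bar u$ on the interval $[t_{m'}+t,\,t_m]$ with $t_m>t_{m'}+t$. First let $m\to\infty$: since $\bar\rho(t_m)-\bar\rho(t_{m'}+t)\to+\infty$, the argument of $\varphi_A$ on the left tends to $+\infty$, and weak convergence of $\bar u(t_m,\bar\rho(t_m)+\cdot)$ to $\tilde u(0)$ gives the lower bound $\int\tilde u^2(t)$ (using mass conservation $\int\tilde u^2(t)=\int\tilde u^2(0)$). Then let $m'\to\infty$ on the right-hand side: by the right-tail estimate \eqref{eq:tbu} already obtained,
\[
\limsup_{m'\to\infty}\int \bar u^2(t_{m'}+t,x)\,\varphi_A(x-\bar\rho(t_{m'}+t)+x_0)\,dx
\le \int \tilde u^2(t,x)\,\varphi_A(x-\tilde\rho(t)+x_0)\,dx + \frac{C_0}{x_0^{2r-1}}.
\]
Subtracting yields $\int \tilde u^2(t,x)\bigl(1-\varphi_A(x-\tilde\rho(t)+x_0)\bigr)\,dx\le 2C_0\,x_0^{-\alpha}$, which is the left-tail bound. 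The key idea you were missing is that the forward-in-time monotonicity \eqref{monotonicity2} on $\bar u$, applied between \emph{two} sequence times $t_{m'}+t<t_m$, yields the backward-in-time decay for the limit $\tilde u$.
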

\begin{proof}[Proof of Lemma \ref{lemma15}]
The main ingredient of the proof is Proposition \ref{PR:2} applied to $\bar u(t)$. 
Fix $\mu= \frac 12$,  $r= \frac { \alpha+1} 2$, $A$ large enough, and let $C_0=C(\frac 12 , r, A)>0$
be the constant given by  Proposition \ref{PR:2}.

First, we prove the decay estimates on the right.
Let $t\in (-T_1,T_2)$ and $m$ be such that $t_m+t>0$.
From \eqref{monotonicity1} applied to $\bar u$, $t_2=t_m+t$ and $t_1=0$, we have
\begin{align*}
& \int \bar u^2(t_m+t,x) \varphi_A(x-\bar \rho(t_m+t) -x_0) dx \\& \leq 
\int \bar u^2(0,x) \varphi_A(x-\bar \rho(0) - \tfrac 12 (\bar \rho(t_m+t)  - \bar \rho(0)) -x_0) dx + \frac {C_0}{x_0^{2r-1}}.
\end{align*}
Thus, passing to the limit as $m\to +\infty$, using $\bar\rho(t_m+t)\to +\infty$ when $m\to +\infty$, we have    
\begin{equation}\label{eq:tbu}
\limsup_{m\to +\infty} \int \bar u^2(t_m+t,x) \varphi_A(x-\bar \rho(t_m+t) -x_0) dx \leq \frac {C_0}{x_0^{2r-1}}.
\end{equation}
It follows from the previous estimate and Lemma \ref{lemma14} that
$$
	\int \tilde u^2(t,x)  \varphi_A(x-\tilde \rho(t) -x_0) dx  \leq \frac {C_0}{x_0^{2r-1}}.
$$

Second, we prove decay estimate on the left. Let $t\in (-T_1,T_2)$ and let $m,$ $m'$ be such that
$t_m > t_{m'} +t$. Using \eqref{monotonicity2}, we obtain
\begin{align*}
& \int \bar u^2(t_m,x) \varphi_A(x- \bar \rho(t_m) + \tfrac 12 (\bar \rho(t_m) - \bar \rho(t_{m'}+t) ) +x_0) dx\\
& \leq \int \bar u^2 (t_{m'}+t,x) \varphi_A(x- \bar \rho(t_{m'} + t) +x_0) dx + \frac {C_0}{x_0^{2 r-1}}.
\end{align*}
By Lemma \ref{lemma14}, we have on the one hand, for $m'$ fixed,   
$$
\liminf_{m\to +\infty} \int \bar u^2(t_m,x) \varphi_A(x- \bar \rho(t_m) + \tfrac 12 (\bar \rho(t_m) - \bar \rho(t_{m'}) ) +x_0) dx \geq \int \tilde u^2(t),
$$
and on the other hand, using \eqref{eq:tbu},
\begin{align*}
& \limsup_{m' \to +\infty} \int \bar u^2 (t_{m'}+t,x) \varphi_A(x- \bar \rho(t_{m'} + t) +x_0) dx \\
&\leq \int \tilde  u^2 (t,x) \varphi_A(x- \tilde  \rho( t) +x_0) dx + \frac {C_0}{x_0^{2r-1}}.
\end{align*}
It follows that
$$
 \int \tilde  u^2 (t,x) (1- \varphi_A(x- \tilde  \rho( t) +x_0) ) dx \leq  \frac {2 C_0}{x_0^{2r-1}}.
$$
Lemma \ref{lemma15} is now proved.
\end{proof}

\medskip

\noindent\textbf{Step 4.} Conclusion of the proof by rigidity properties.
From \eqref{eq1:le13} and Lemma \ref{claim10}, we have
\begin{equation}\label{eq:truc}
|\tilde \lambda(0) -1|  \leq \delta(\beta_0),\quad \text{where}\quad
\lim_{\beta_0 \to 0}\delta(\beta_0)   =0. 
\end{equation}

We claim the following lemma to be used as a bootstrap argument on the behavior of $\tilde \lambda(t)$.

\begin{lemma}\label{cl:9}
	Assume further  that for $-T_1<-t_1<0<t_2<T_2$,
	\begin{equation}\label{eq1:cl9}
		\forall t\in (-t_1,t_2),\quad | \tilde \lambda(t) - 1 | \leq \frac  12,
	\end{equation}	
	then for some $\epsilon>0$,
	\begin{equation}\label{eq2:cl9}
		\forall t\in (-t_1,t_2),\quad
			\tilde \eta(t)\in L^1(\RR) \quad \text{and} \quad
			\int |\tilde \eta(t,x)| dx \leq  C \beta_0^{\epsilon}.
	\end{equation}
\end{lemma}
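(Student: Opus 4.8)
The plan is to estimate $\int|\tilde\eta(t,x)|\,dx$ by cutting at a radius $R>1$ chosen as a negative power of $\beta_0$, treating separately the bounded region $|y|<R$ around the rescaled soliton and the tail $|y|>R$. Two uniform ingredients, valid for all $t\in(-T_1,T_2)$, will be used. The first is the sharp smallness $\|\tilde\eta(t)\|_{H^{\frac\alpha2}}\leq C\sqrt{\beta_0}$: this is inherited from the corresponding bound \eqref{eq1:le11} for $\bar u$ by passing to the weak limit (using the convergence of the modulation parameters in Lemma \ref{lemma14} and weak lower semicontinuity of the $H^{\frac\alpha2}$ norm), or alternatively one reruns directly for $\tilde u$ the energy--coercivity argument of Lemma 3 in \cite{Me}, which applies since $E(\tilde u(0))<0$ and $\beta(\tilde u(0))\leq\beta_0$. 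The second is the decay estimate of Lemma \ref{lemma15}, whose constant does not depend on $\beta_0$.

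First I would upgrade the $L^2$ tail bound of Lemma \ref{lemma15} to an $L^1$ tail bound. Splitting $\{|x-\tilde\rho(t)|\geq x_0\}$ into dyadic annuli and applying Cauchy--Schwarz on each, one obtains for $x_0>1$
$$
\int_{|x-\tilde\rho(t)|\geq x_0}|\tilde u(t,x)|\,dx\;\leq\;C\sum_{k\geq0}(2^{k+1}x_0)^{\frac12}(2^kx_0)^{-\frac\alpha2}\;\leq\;C\,x_0^{-\frac{\alpha-1}2},
$$
the geometric series converging precisely because $\alpha>1$ (this is where $\alpha_0\geq1$ enters). In particular $\tilde u(t)\in L^1(\RR)$, and since $Q\in L^1(\RR)$ by \eqref{eq:decay}, also $\tilde\eta(t)\in L^1(\RR)$.

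Next, for fixed $t\in(-t_1,t_2)$ and $R>1$: on $|y|<R$, Cauchy--Schwarz and the $H^{\frac\alpha2}$ bound give $\int_{|y|<R}|\tilde\eta(t,y)|\,dy\leq(2R)^{\frac12}\|\tilde\eta(t)\|_{L^2}\leq CR^{\frac12}\sqrt{\beta_0}$. On $|y|>R$, I write $\tilde\eta(t,y)=\tilde\lambda^{\frac1\alpha}(t)\tilde u(t,\tilde\lambda^{\frac2\alpha}(t)y+\tilde\rho(t))-Q(y)$, use \eqref{eq:decay} to bound $\int_{|y|>R}Q(y)\,dy\leq CR^{-\alpha}$, and change variables in the remaining term; since $\frac12\leq\tilde\lambda(t)\leq\frac32$ by the bootstrap hypothesis \eqref{eq1:cl9}, the region $\{|y|>R\}$ maps into $\{|x-\tilde\rho(t)|>cR\}$ for a fixed $c>0$ and the Jacobian is bounded, so by the $L^1$ tail bound above this term is $\leq CR^{-\frac{\alpha-1}2}$. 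Altogether $\int|\tilde\eta(t,x)|\,dx\leq C\bigl(R^{\frac12}\sqrt{\beta_0}+R^{-\frac{\alpha-1}2}\bigr)$; choosing $R=\beta_0^{-1/\alpha}$ balances the two terms and yields $\int|\tilde\eta(t,x)|\,dx\leq C\beta_0^{\frac{\alpha-1}{2\alpha}}$, so the lemma holds with $\epsilon=\frac{\alpha-1}{2\alpha}>0$.

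The one genuinely delicate point is the strict inequality $\alpha>1$, which is exactly what makes an $L^2$ function with the algebraic spatial decay provided by Lemma \ref{lemma15} automatically lie in $L^1$ with a quantitative tail rate; for $\alpha=1$ this step breaks down and one would need sharper (exponential, in the gKdV case $\alpha=2$) localization of $\tilde u$. The other point requiring care -- but essentially routine given the earlier sections -- is the transfer of the explicit $\sqrt{\beta_0}$ rate for $\|\tilde\eta\|_{H^{\frac\alpha2}}$ to the limiting solution $\tilde u$, which relies on Lemma \ref{lemma14} together with the refined modulation estimate.
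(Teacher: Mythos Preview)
Your argument is correct and uses the same two ingredients as the paper---the tail decay of Lemma \ref{lemma15} and the refined smallness $\|\tilde\eta\|_{H^{\frac\alpha2}}\leq C\sqrt{\beta_0}$---but organizes them differently. The paper does not pass through an $L^1$ tail bound for $\tilde u$; instead it integrates the estimate $x_0^{\epsilon}\int_{|x|>x_0}\tilde u^2(t,x+\tilde\rho(t))\,dx\leq C x_0^{-\alpha+\epsilon}$ in $x_0$ (Fubini) to obtain a weighted moment $\int|x|^{1+\epsilon}\tilde\eta^2\leq C$, and then closes with the H\"older splitting $\int|\tilde\eta|\leq\|\tilde\eta\|_{L^\infty}^{\epsilon}\int|\tilde\eta|^{1-\epsilon}$ together with the Sobolev embedding $H^{\frac\alpha2}\hookrightarrow L^\infty$. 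Your dyadic Cauchy--Schwarz route is slightly more elementary (no $L^\infty$ bound is needed) and yields the explicit exponent $\epsilon=\frac{\alpha-1}{2\alpha}$; the paper's route has the minor advantage that the weighted moment $\int|x|^{1+\epsilon}\tilde\eta^2\leq C$ is itself a natural intermediate statement. Your discussion of how the $\sqrt{\beta_0}$ rate transfers to $\tilde u$ is actually more explicit than the paper's, which simply appeals to ``Lemma \ref{MODULATION}'' at the very end.
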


Assuming Lemma \ref{cl:9}, we finish the proof of Theorem \ref{TH2}.
Using the invariant 
$$
\forall t\in (-T_1,T_2),\quad \int \tilde u(t)= \int \tilde u(0)
$$
and Lemma \ref{cl:9}, we prove that the solution $\tilde u(t)$ is global (i.e. $T_1=T_2=\infty$) and
$$
|\tilde \lambda(0) -1|  \leq \tilde \delta(\beta_0),\quad
\lim_{\beta_0 \to 0}\tilde \delta(\beta_0) =0 .
$$

By \eqref{eq1:cl9}, \eqref{eq2:cl9}, for all $t\in (-t_1,t_2)$, we have
$$
\left|\int \tilde u(t) - \int Q_{\tilde \lambda(t)}  \right| \leq C \beta_0^\epsilon,
$$
and so since $\int Q_\lambda=\lambda^\frac1\alpha\int Q$, 
\begin{equation}\label{eq4:pth2}
\left| \tilde \lambda(t)^{\frac1\alpha} - \tilde \lambda(0)^\frac1\alpha\right|\leq 
\left| \int Q_{\tilde \lambda(0)} - \int Q_{\tilde \lambda(t)}\right|
 \leq C\beta_0^\epsilon,
\end{equation}
Therefore, by a standard continuity argument, \eqref{eq:truc}, \eqref{eq1:cl9} and thus \eqref{eq4:pth2} are satisfied on 
$(-T_1,T_2)$. Thus, $\tilde u(t)$ is bounded on $(-T_1,T_2)$ in $H^\frac\alpha2$, which proves that $T_1=T_2=\infty$, and means that $\tilde u(t)$ is global. Moreover, \eqref{eq4:pth2} is satisfied for all $t\in \RR$.
By Proposition \ref{PR:4}, $\tilde u$ has to be a soliton but this is a contradiction with $E(\tilde u(0))<0$, since the energy of a soliton is zero.  
This concludes the proof of Theorem \ref{TH2} assuming Lemma \ref{cl:9}.
Thus, we only have to prove Lemma~\ref{cl:9}.

\begin{proof}[Proof of Lemma \ref{cl:9}]
We prove the result for $t\in (0,t_2)$, the proof being the same for negative times.
Let $0<\epsilon<\frac 12 (\alpha-1)$ small to be chosen later. As long as \eqref{eq1:cl9} is satisfied, we have by Lemma \ref{lemma15},
$$
x_0^{\epsilon}  \int_{|x|>x_0} \tilde u^2(t,x+\tilde \rho(t)) dx \leq C |x_0|^{-\alpha+\epsilon}.
$$
Integrating this estimate in $x_0$ and using Fubini theorem, we obtain
\begin{equation}\label{eq2:pth2}
\int |x|^{1+\epsilon} \tilde u^2(t,x+\tilde \rho(t)) dx \leq C.
\end{equation}
By the definition of $\tilde \eta(t)$ and the decay properties of $Q$, as long as \eqref{eq1:cl9} is satisfied, we
obtain
\begin{equation}\label{eq3:pth2}
\int |x|^{1+\epsilon} \tilde \eta^2(t,x) dx \leq C.
\end{equation}
In particular, by Holder inequality,
\begin{align*}
\int |\tilde \eta(t)| & \leq \|\tilde \eta\|_{L^\infty}^\epsilon \int |\tilde \eta(t)|^{1-\epsilon} \\ &\leq 
 \|\tilde \eta\|_{L^\infty}^\epsilon \left( \int |\tilde \eta(t)|^2 (1+|x|)^{\frac 1{(1-\epsilon)^2}}\right)^{\frac {1-\epsilon}2}
 \left(\int (1+|x|)^{-\frac 1{1-\epsilon^2}} \right)^{\frac {1+\epsilon}2}\\
& \leq   C  \|\tilde \eta\|_{L^\infty}^\epsilon
\end{align*}
and the result follows from
$
\|\tilde \eta\|_{L^\infty}\leq \|\tilde \eta\|_{H^{\frac \alpha 2}}
$
and Lemma \ref{MODULATION}.
\end{proof}

\appendix
\section{Appendix}
In the appendix, we gather the proof of standard results for reader's convenience.

\begin{lemma}\label{LE:decay} Let $r>\frac 12$ and $\alpha>-1$. Let
$$
g(x)=g_{\alpha,r}(x)=\DD^{\alpha} \left(\frac 1 {\langle x\rangle^{2r}}\right)
\quad\text{and}\quad 
h(\xi)=\int e^{-i x\xi} \frac 1 {\langle x\rangle^{2r}} dx
\quad \text{so that} \quad  \hat g(\xi)=|\xi|^\alpha h(\xi) .
$$
Then
	\begin{itemize}
\item[(i)]  There exists $C>0$ such that
$$
\left|g_{\alpha,r}\right(x)| \leq \frac C{\langle x \rangle^{\alpha+1}}.
$$
\item[(ii)] The function $h$ is continuous, and for any $M>0$, there exists $C_M>0$ such that $|h(\xi)|\leq \frac {C_M}{\langle \xi\rangle^M}$.

Moreover, $h\in C^{\infty}(\RR\setminus \{0\})$ and for all $\beta\in \NN, \ q>0,$ there exists
$C_{\beta,q}>0$ such that
$$
\left|\partial_\xi^\beta h(\xi)\right|
\leq \frac {C_{\beta,q}}{|\xi|^\beta \langle \xi \rangle^q}.
$$
\end{itemize}
\end{lemma}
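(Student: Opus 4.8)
The plan is to prove the statements about $h$ in part~(ii) first, since the symbol-type bounds on $h$ that (ii) provides are exactly what is needed for the pointwise decay of $g$ in part~(i). Continuity of $h$ is immediate from dominated convergence, as $\langle\cdot\rangle^{-2r}\in L^1(\RR)$ for $r>\frac12$, and $h$ is real and even since $\langle\cdot\rangle^{-2r}$ is. The rapid decay follows by integrating by parts $N$ times: since $\partial_x^N(\langle x\rangle^{-2r})=O(\langle x\rangle^{-2r-N})\in L^1$, one gets $|h(\xi)|\le C_N|\xi|^{-N}$, and together with the trivial bound $|h(\xi)|\le\|\langle\cdot\rangle^{-2r}\|_{L^1}$ this gives $|h(\xi)|\le C_M\langle\xi\rangle^{-M}$ for every $M$. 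For the smoothness away from the origin and the derivative estimates the key device is the subordination identity $(1+x^2)^{-r}=\Gamma(r)^{-1}\int_0^\infty t^{r-1}e^{-t(1+x^2)}\,dt$; taking the Fourier transform and evaluating the Gaussian integral in $x$ yields
$$h(\xi)=\frac{\sqrt\pi}{\Gamma(r)}\int_0^\infty t^{r-\frac32}\,e^{-t-\frac{\xi^2}{4t}}\,dt,$$
the integral converging at $t=0$ precisely because $r>\frac12$. On any compact subset of $\RR\setminus\{0\}$ one may differentiate under the integral sign (the factor $e^{-t}$ controls $t\to+\infty$ and $e^{-\xi^2/4t}$ controls $t\to0^+$), so $h\in C^\infty(\RR\setminus\{0\})$; and since $\partial_\xi^\beta e^{-\xi^2/4t}=(4t)^{-\beta/2}P_\beta(\xi/\sqrt{4t})e^{-\xi^2/4t}$ for a polynomial $P_\beta$ of degree $\beta$, one has $|\partial_\xi^\beta e^{-\xi^2/4t}|\le C_\beta(4t)^{-\beta/2}e^{-\xi^2/8t}$, whence
$$|\partial_\xi^\beta h(\xi)|\le C_\beta\int_0^\infty t^{r-\frac32-\frac\beta2}\,e^{-t-\frac{\xi^2}{8t}}\,dt.$$

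The remaining step — the only genuinely delicate point — is to bound this last integral by $C_{\beta,q}|\xi|^{-\beta}\langle\xi\rangle^{-q}$. The difficulty is that the exponent $r-\frac32-\frac\beta2$ becomes arbitrarily negative as $\beta$ grows, so the integral is singular as $\xi\to0$, and one must reconcile this with the clean power $|\xi|^{-\beta}$. I would split at $t=1$. For $|\xi|\ge1$, the elementary inequality $t+\frac{\xi^2}{8t}\ge\frac t2+\frac{|\xi|}{\sqrt8}+\frac{\xi^2}{16t}$ gives the bound $e^{-|\xi|/\sqrt8}\int_0^\infty t^{r-\frac32-\frac\beta2}e^{-t/2-\xi^2/16t}\,dt\le Ce^{-|\xi|/\sqrt8}$, which absorbs any $\langle\xi\rangle^{-q}$. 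For $|\xi|\le1$: on $t\ge1$ the factor $e^{-t}$ makes the contribution bounded, and on $0<t\le1$ either $r-\frac32-\frac\beta2>-1$, in which case the contribution is already $O(1)$, or one substitutes $t=\xi^2u/8$ to reduce to the convergent integral $\int_0^\infty u^{r-\frac32-\frac\beta2}e^{-1/u}\,du$ and obtains a contribution $\le C|\xi|^{2r-1-\beta}$; since $r>\frac12$ forces $2r-1\ge0$, this is $\le C|\xi|^{-\beta}$ on $|\xi|\le1$, and so is the $O(1)$ term. This proves the bound on $\partial_\xi^\beta h$. (Alternatively one may recognise $h(\xi)=c_r|\xi|^{r-1/2}K_{r-1/2}(|\xi|)$ with $K_\nu$ the modified Bessel function and invoke its standard small- and large-argument behaviour.)

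To deduce~(i), set $m(\xi)=|\xi|^\alpha h(\xi)$, so that $\hat g=m$ and, since $m\in L^1$ (integrable near $0$ because $\alpha>-1$, rapidly decaying at $\infty$ by~(ii)), $g(x)=\frac1{2\pi}\int e^{ix\xi}m(\xi)\,d\xi$. Part~(ii) together with $\bigl|\partial_\xi^j|\xi|^\alpha\bigr|\le C_j|\xi|^{\alpha-j}$ gives, by the Leibniz rule, $|\partial_\xi^\beta m(\xi)|\le C_{\beta,q}|\xi|^{\alpha-\beta}\langle\xi\rangle^{-q}$ for $\xi\ne0$. Fix $\chi_0\in C_c^\infty(\RR)$ with $\chi_0\equiv1$ near $0$ and $\operatorname{supp}\chi_0\subset\{|\xi|\le1\}$, and split $m=\chi_0m+(1-\chi_0)m$. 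The inverse Fourier transform of $(1-\chi_0)m$ is a Schwartz function, hence $O(\langle x\rangle^{-\alpha-1})$. For $g_0$, the inverse Fourier transform of $\chi_0m$: when $|x|\le1$ it is bounded by $\int|\xi|^\alpha|h(\xi)|\,d\xi<\infty$; when $|x|\ge1$ split the $\xi$-integral at $|\xi|=1/|x|$, estimate the inner part by $\int_{|\xi|\le1/|x|}|\xi|^\alpha\,d\xi\le C|x|^{-\alpha-1}$ (using $\alpha>-1$), and integrate by parts $N$ times with $N>\alpha+1$ on $\{1/|x|<|\xi|<1\}$, where $\chi_0m$ is smooth and vanishes with all derivatives at $|\xi|=1$. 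The bound $|\partial_\xi^j(\chi_0m)(\xi)|\le C_j|\xi|^{\alpha-j}$ makes each boundary term at $|\xi|=1/|x|$ of size $|x|^{-j}|x|^{-(\alpha-j+1)}=|x|^{-\alpha-1}$, and the remainder of size $|x|^{-N}\int_{1/|x|}^1|\xi|^{\alpha-N}\,d\xi\le C|x|^{-\alpha-1}$. Altogether $|g(x)|\le C\langle x\rangle^{-\alpha-1}$ for all $x$, which is~(i). The main obstacle throughout is the derivative estimate of the middle paragraph; the rest is routine Fourier analysis.
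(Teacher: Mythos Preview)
Your proof is correct and complete apart from one small technical point in part~(ii). When you treat $\int_0^1 t^{r-3/2-\beta/2}e^{-\xi^2/8t}\,dt$ for $|\xi|\le1$, your dichotomy ``either $r-\tfrac32-\tfrac\beta2>-1$ \dots\ or one substitutes'' leaves out the borderline case $\beta=2r-1\in\NN$ (possible e.g.\ for $r=1,\,\tfrac32,\,2,\dots$): in the substitution the extended integral $\int_0^\infty u^{-1}e^{-1/u}\,du$ diverges at $u=\infty$. The fix is immediate --- keep the actual upper limit $8/\xi^2$ and observe $\int_1^{8/\xi^2}u^{-1}\,du\le C\log(1/|\xi|)\le C_\epsilon|\xi|^{-\epsilon}$ for any $0<\epsilon\le\beta$ --- so the bound $|\partial_\xi^\beta h(\xi)|\le C|\xi|^{-\beta}$ still follows.

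Your approach to~(ii) is genuinely different from the paper's. The paper proceeds by a direct mollification argument: it sets $h_N(\xi)=\int e^{-ix\xi}\langle x\rangle^{-2r}\chi(x/N)\,dx$, computes $\xi^M\partial_\xi^\beta h_N$ by integration by parts in $x$, and passes to the limit $N\to\infty$ in $\mathcal{D}'$; the choices $M=\beta$ and $M=\beta+q$ give the two regimes $|\xi|\le1$ and $|\xi|\ge1$. Your route via the subordination identity $\langle x\rangle^{-2r}=\Gamma(r)^{-1}\int_0^\infty t^{r-1}e^{-t(1+x^2)}\,dt$ and the resulting heat-kernel (equivalently, Bessel-$K$) representation of $h$ is more structural: it makes the singularity of $\partial_\xi^\beta h$ at $\xi=0$ transparent and avoids the distributional limit argument. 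The paper's method has the advantage of working for any Schwartz-like weight in place of $\langle x\rangle^{-2r}$, while yours exploits the specific algebraic form.

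For part~(i) the two proofs are essentially the same. The paper splits $\hat g=|\xi|^\alpha h(\xi)$ with a low-frequency cutoff $\chi(\xi)$, then on the low-frequency piece substitutes $\sigma=x\xi$ to extract the factor $x^{-\alpha-1}$ and shows the remaining integral is bounded by a further split $\chi(\sigma)/(1-\chi(\sigma))$ with integration by parts in $\sigma$. Your split at $|\xi|=1/|x|$ followed by $N$-fold integration by parts on $\{1/|x|<|\xi|<1\}$ is the same computation without the substitution; the boundary terms at $|\xi|=1/|x|$ play the role of the paper's $k_1$.
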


\begin{proof} The proof is standard. Clearly $h$ is a continuous and bounded function. By integration by part we have,
\begin{equation}
 \begin{array}{lll}
 (i\xi)^Nh(\xi)&=&\displaystyle \int (-\px)^N(e^{-ix\xi})\dfrac{1}{\langle x \rangle^{2r}}dx\\[12pt]
&=&\displaystyle\int e^{-ix\xi}(\px)^N\left(\dfrac{1}{{\langle x \rangle}^{2r}}\right) dx
\end{array}
\end{equation}

We have $\displaystyle |(\px)^N\left(\dfrac{1}{{\langle x \rangle}^{2r}}\right)|\le \frac{C}{{\langle x \rangle}^{2r+N}}$ which is an integrable function and so $\xi^Nh(\xi)$ is bounded. This gives the first part of \textit{(ii)}.

 Let $\chi\in {\cal C}^\infty_0(\RR)$ such that $0\le \chi \le 1$, $\chi(\xi)=1$ if $|\xi|\le 1$ and $\chi(\xi)=0$ if $|\xi|\ge 2$, we set $h_N(\xi)=\int e^{-ix\xi}\frac{1}{\langle x \rangle^{2r}}\chi(\frac{x}{N})dx$, $h_N\to h$ uniformly and in particular in ${\cal D'}$, then  $\pxi^\beta h_N \to\pxi^\beta h$ in ${\cal D'}$. Let $M>0$ to be fixed below, we have for some non important constants $C$,
\begin{equation}
 \begin{array}{lll}
 \xi^M\pxi^\beta h_N(\xi)&=& \displaystyle C_\beta\int e^{-ix\xi}\frac{\xi^M x^\beta}{\langle x \rangle^{2r}}\chi(\frac{x}{N})dx\\[12pt]
&=&  \displaystyle C_{\beta, M}\int \px^M (e^{-ix\xi})\frac{x^\beta }{\langle x \rangle^{2r}}\chi(\frac{x}{N})dx\\[12pt]
&=&  \displaystyle \sum_{M_1+M_2=M} C_{\beta, M_1,M_2}\int e^{-ix\xi} \px^{M_1} (\frac{x^\beta }{\langle x \rangle^{2r}})\frac{1}{N^{M_2}}\px^{M_2} (\chi)(\frac{x}{N})dx
\end{array}
\end{equation}
We have $|\px^{M_1} (\frac{x^\beta }{\langle x \rangle^{2r}})|\le \frac{C}{\langle x \rangle^{2r-\beta+M_1}}$.

If $M_2\ge 1$, the integral is restricted to $N\le |x|\le 2N$ and we have
\begin{equation}
 |\int e^{-ix\xi} \px^{M_1} (\frac{x^\beta }{\langle x \rangle^{2r}})\frac{1}{N^{M_2}}\px^{M_2} (\chi)(\frac{x}{N})dx|\le \frac{C}{N^{2r-\beta+M-1}}
\end{equation}
then these terms goes to 0 if $2r-\beta+M-1>0$.

If $M_2=0$, $\frac{1}{\langle x \rangle^{2r-\beta+M}}$ is integrable if $2r-\beta+M-1>0$. This implies that $\xi^M\pxi^\beta h_N(\xi) \to C_{\beta,M}\int e^{-ix\xi} \px^{M} (\frac{x^\beta }{\langle x \rangle^{2r}})dx$ uniformly and since $\xi^M\pxi^\beta h_N \to\xi^M\pxi^\beta h$ in ${\cal D'}$, we obtain 
\begin{equation}
\xi^M \pxi^\beta h(\xi)= C_{\beta,M}\int e^{-ix\xi} \px^{M} (\frac{x^\beta }{\langle x \rangle^{2r}})dx
\end{equation}
If we take $M=\beta$ we obtain the second part of the estimate  \textit{(ii)} if $|\xi|\le 1$. If we take $M=\beta+q$ we obtain the second part of the estimate  \textit{(ii)} if $|\xi|\ge 1$.

Now, we prove  \textit{(i)}. We set $g_{\alpha,r}(x)=\frac{1}{2\pi}(g_1(x)+g_2(x))$ where 
\begin{equation}
 \begin{array}{lll}
g_1(x)&=&\int e^{ix\xi}|\xi|^\alpha h(\xi)(1-\chi(\xi))d\xi\\[10pt]
g_2(x)&=&\int e^{ix\xi}|\xi|^\alpha h(\xi)\chi(\xi)d\xi
\end{array}
\end{equation}
Following  \textit{(ii)}, $|\xi|^\alpha h(\xi)$ is integrable, thus $g_1$ is continuous and bounded and for all $M>0$,
\begin{equation}\label{eq1:LE:decay}
|\pxi^\beta(|\xi|^\alpha h(\xi)(1-\chi(\xi)))|\le \frac{C}{\langle \xi \rangle^{M}}
\end{equation}
moreover, by integration by part, we have
\begin{equation}\label{eq2:LE:decay}
 x^\beta g_1(x)=\int i^\beta e^{ix\xi}\pxi^\beta(|\xi|^\alpha h(\xi)(1-\chi(\xi)))d\xi
\end{equation}
\eqref{eq1:LE:decay} and \eqref{eq2:LE:decay}  give that $x^\beta g_1(x)$ bounded for all $\beta$.

To estimate $g_2$ we assume $x\ge 1$, the case $x\le -1$ follows by the same way. We set $x\xi=\sigma$. We have 
\begin{equation}
 \begin{array}{lll}
g_2(x)&=&x^{-\alpha -1} \int e^{i\sigma}|\sigma|^\alpha h(\frac{\sigma}{x})\chi(\frac{\sigma}{x})d\sigma = x^{-\alpha -1}(k_1(x)+k_2(x))\textrm{ where}\\[10pt]
k_1(x)&=& \int e^{i\sigma}\chi(\sigma)|\sigma|^\alpha h(\frac{\sigma}{x})\chi(\frac{\sigma}{x})d\sigma \\[10pt]
k_2(x)&=& \int e^{i\sigma}(1-\chi(\sigma))|\sigma|^\alpha h(\frac{\sigma}{x})\chi(\frac{\sigma}{x})d\sigma 
\end{array}
\end{equation}
Obviously $k_1$ is bounded. By integration by part we have
\begin{equation}
 \begin{array}{lll}
k_2(x)&=& \displaystyle \int (-i\partial_\sigma)^N(e^{i\sigma})(1-\chi(\sigma))|\sigma|^\alpha h(\frac{\sigma}{x})\chi(\frac{\sigma}{x})d\sigma  \\[10pt]
&=&\displaystyle \sum_{N_1,N_2,N_3}\int e^{i\sigma}\partial_\sigma^{N_1}( (1-\chi(\sigma))|\sigma|^\alpha  )\frac{1}{x^{N_2+N_3}}(\partial_\sigma^{N_2}h)(\frac{\sigma}{x})
(\partial_\sigma^{N_3}\chi )(\frac{\sigma}{x})d\sigma
\end{array}
\end{equation}
We have $|\partial^N_\sigma((1-\chi(\sigma))|\sigma|^\alpha)|\le \frac{C}{\langle \sigma\rangle^{N-\alpha}}$.

If $N_3\ge 1$, $x\le |\sigma|\le 2x$ and we obtain
\begin{equation}
 \int |\partial_\sigma^{N_1}( (1-\chi(\sigma))|\sigma|^\alpha  )\frac{1}{x^{N_2+N_3}}(\partial_\sigma^{N_2}h)(\frac{\sigma}{x})
(\partial_\sigma^{N_3}\chi )(\frac{\sigma}{x})|d\sigma\le \frac{C}{\langle x\rangle^{N-\alpha-1}}
\end{equation}
which is bounded if $N\ge \alpha+1$.

If $N_3=0$, following \textit{(ii)}, we have
\begin{equation}
 \int |\partial_\sigma^{N_1}( (1-\chi(\sigma))|\sigma|^\alpha  )\frac{1}{x^{N_2}}(\partial_\sigma^{N_2}h)(\frac{\sigma}{x})\chi(\frac{\sigma}{x})|d\sigma\le \int\frac{C}{{\langle \sigma\rangle^{N-\alpha}}}d\sigma
\end{equation}
which is bounded for $N$ large enough. This proves  \textit{(i)}.
\end{proof}

\begin{lemma}\label{LE:decay-2}
 Let $p(\xi)$ an homogeneous function of degree $\beta>-1$. Let $\chi\in {\cal C}^\infty_0(\RR)$ such that $0\le \chi \le 1$, $\chi(\xi)=1$ if $|\xi|\le 1$ and $\chi(\xi)=0$ if $|\xi|\ge 2$.
 Let 
\begin{equation}
k(x)=\frac1{2\pi}\int e^{ix\xi}p(\xi)\chi(\xi)d\xi 
\end{equation}
then for all $q\in\NN$, there exists $C_q>0$ such that for all $x\in\RR$
\begin{equation}
 |\px ^qk(x)|\le \frac{C_q}{\langle x\rangle^{\beta+q+1}}
\end{equation}
\end{lemma}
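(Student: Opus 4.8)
The plan is to first reduce to the case $q=0$, and then run essentially the same argument as in the proof of Lemma~\ref{LE:decay}~(i), with one simplification: in one dimension a homogeneous function of degree $\beta$ has the explicit form $p(\xi)=p(1)|\xi|^\beta$ for $\xi>0$ and $p(\xi)=p(-1)|\xi|^\beta$ for $\xi<0$, so $p$ is already $C^\infty$ on $\RR\setminus\{0\}$ and there is no need to introduce an auxiliary profile like the function $h$ there.

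For the reduction, I would note that $\px^q k(x)=\frac1{2\pi}\int e^{ix\xi}(i\xi)^q p(\xi)\chi(\xi)\,d\xi$ and that $(i\xi)^q p(\xi)$ is again homogeneous, now of degree $\beta+q>-1$; hence it suffices to establish the estimate $|k(x)|\le C\langle x\rangle^{-\beta-1}$ for an arbitrary homogeneous symbol of degree $\beta>-1$, and then apply it with $\beta$ replaced by $\beta+q$. Since $\beta>-1$ we have $|p(\xi)|\le C|\xi|^\beta$ near the origin, so $p\chi\in L^1$ and $k$ is bounded; this already gives the desired bound on the region $|x|\le 1$.

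For $|x|\ge 1$, say $x\ge 1$ (the case $x\le-1$ being symmetric), I would substitute $\sigma=x\xi$ and use homogeneity to write
$$
k(x)=\frac1{2\pi}\,x^{-\beta-1}\big(k_1(x)+k_2(x)\big),\qquad
k_1(x)=\int e^{i\sigma}\chi(\sigma)p(\sigma)\chi(\sigma/x)\,d\sigma,
$$
with $k_2(x)=\int e^{i\sigma}(1-\chi(\sigma))p(\sigma)\chi(\sigma/x)\,d\sigma$. The term $k_1$ is bounded uniformly in $x$ because $\chi p\in L^1$ and $|\chi(\sigma/x)|\le 1$. For $k_2$, on the support of $1-\chi$ the function $p$ is smooth and $|\partial_\sigma^{N_1}[(1-\chi(\sigma))p(\sigma)]|\le C\langle\sigma\rangle^{\beta-N_1}$, so I would integrate by parts $N$ times using $e^{i\sigma}=(-i\partial_\sigma)^N e^{i\sigma}$ and expand $\partial_\sigma^N\big[(1-\chi(\sigma))p(\sigma)\chi(\sigma/x)\big]$ by Leibniz. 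In the summand where no derivative hits $\chi(\sigma/x)$ one is left with $\int_{|\sigma|\ge 1}\langle\sigma\rangle^{\beta-N}\,d\sigma$, finite once $N>\beta+1$; in the summands where $\chi(\sigma/x)$ is differentiated $N_2\ge 1$ times, the factor $x^{-N_2}(\partial_\sigma^{N_2}\chi)(\sigma/x)$ restricts the integral to $x\le|\sigma|\le 2x$ and produces a contribution $\lesssim x^{-N_2}\cdot x\cdot x^{\beta-N_1}=x^{\beta+1-N}$, bounded for $N\ge\beta+1$. Fixing such an $N$ yields $|k_2(x)|\le C$, hence $|k(x)|\le C x^{-\beta-1}$ for $x\ge 1$, and together with the bound on $|x|\le 1$ this proves the lemma.

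The argument involves no genuinely hard step; the only point requiring a little care is the bookkeeping in the Leibniz expansion and checking the convergence of the $\sigma$-integrals, which goes exactly as in the proof of Lemma~\ref{LE:decay}~(i).
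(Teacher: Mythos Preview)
Your proof is correct and follows essentially the same route as the paper's: reduce to $q=0$ via homogeneity of $(i\xi)^q p(\xi)$, rescale $\sigma=x\xi$ for $|x|\ge 1$ to pull out $x^{-\beta-1}$, split with $\chi(\sigma)$ and $1-\chi(\sigma)$, and handle the high-frequency piece by repeated integration by parts in $\sigma$. The only cosmetic difference is that you group $(1-\chi(\sigma))p(\sigma)$ as a single factor in the Leibniz expansion, whereas the paper expands it as a three-fold product; your version is a harmless simplification.
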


\begin{proof} The proof is standard. We have $\px^qk(x)=\frac{1}{2\pi}\int e^{ix\xi}(i\xi)^qp(\xi)\chi(\xi)d\xi$ and as $(i\xi)^qp(\xi)$ is homogeneous of degree $\beta+q$, it is sufficient to prove Lemma \ref{LE:decay-2} for $q=0$. We shall prove the estimate for $x\ge 1$, the case $x\le-1$ follows by the same way.
We set $y=x\xi$ in integral, we have $\int e^{ix\xi}(i\xi)^qp(\xi)\chi(\xi)d\xi=\frac{1}{x^{\beta+1}}\int e^{iy}p(y)\chi(\frac{y}{x})dy$. Lemma  \ref{LE:decay-2} will be proved if we prove that $\int e^{iy}p(y)\chi(\frac{y}{x})dy$ is bounded. We set $J_1=\int e^{iy}p(y)\chi(y)\chi(\frac{y}{x})dy$ and $J_2=\int e^{iy}p(y)(1-\chi(y))\chi(\frac{y}{x})dy$. We remark that $J_1$ does not depend of $x$ if $x$ large enough. We prove that $J_2$ is bounded by integration by part. For $N>\beta +1$ we have $\partial_y^Ne^{iy}=i^Ne^{iy}$ and by integration by part we have,
\begin{equation}
 J_2=\sum_{N_1+N_2+N_3=N}C_{N_1,N_2,N_3}\int e^{iy}\partial_y^{N_1}p(y)\partial_y^{N_2}(1-\chi(y))\frac{1}{x^{N_3}}(\partial_y^{N_3}\chi)(\frac{y}{x})dy
\end{equation}
If $N_2\ge 1$ we integrate on compact domain and these integrals are bounded.

If $N_3\ge 1$ in these integrals we have $x\le |y|\le 2x$ and
\begin{equation}
 |e^{iy}\partial_y^{N_1}p(y)(1-\chi(y))\frac{1}{x^{N_3}}(\partial_y^{N_3}\chi)(\frac{y}{x})|\le C|x|^{\beta-N_1-N_2}\le C |x|^{-1}
\end{equation}
then these integrals are bounded.

If $N_2=N_3=0$
\begin{equation}
 |e^{iy}\partial_y^{N}p(y)(1-\chi(y))\chi(\frac{y}{x})|\le C|y|^{\beta-N}(1-\chi(y))
\end{equation}
and this function is integrable. This proves Lemma \ref{LE:decay-2}.
\end{proof}

\end{document}